\numberwithin{figure}{section}
\theoremstyle{plain}
\newtheorem{thm}{\protect\theoremname}[section]
\theoremstyle{definition}
\newtheorem{defn}[thm]{\protect\definitionname}
\newenvironment{proof}[1][\protect\proofname]{\par
\normalfont\topsep6\p@\@plus6\p@\relax
\trivlist
\itemindent\parindent
\item[\hskip\labelsep\scshape #1]\ignorespaces
}{%
\endtrivlist\@endpefalse
}
\providecommand{\proofname}{Proof}
\theoremstyle{plain}
\newtheorem{cor}[thm]{\protect\corollaryname}
\theoremstyle{plain}
\newtheorem{lem}[thm]{\protect\lemmaname}
\theoremstyle{remark}
\newtheorem{rem}[thm]{\protect\remarkname}
\theoremstyle{definition}
\newtheorem{example}[thm]{\protect\examplename}
\newcommand\myshade{100}
\definecolor{mylinkcolorhtml}{HTML}{0066cc}
\definecolor{mycitecolorhtml}{HTML}{cc6600}
\definecolor{myurlcolorhtml}{HTML}{0066cc}
\colorlet{mylinkcolor}{mylinkcolorhtml}
\colorlet{mycitecolor}{mycitecolorhtml}
\colorlet{myurlcolor}{myurlcolorhtml}
\renewcommand*{\leq}{\leqslant}
\renewcommand*{\geq}{\geqslant}
\numberwithin{equation}{section}
\numberwithin{thm}{section}
\theoremstyle{remark}
\newtheorem*{rem*}{Remark}
\newtheorem*{notation*}{Notation}
\theoremstyle{definition}
\newtheorem{assumption}[thm]{Assumption}
\newtheorem*{ack*}{Acknowledgements}
\providecommand{\corollaryname}{Corollary}
\providecommand{\definitionname}{Definition}
\providecommand{\examplename}{Example}
\providecommand{\lemmaname}{Lemma}
\providecommand{\remarkname}{Remark}
\providecommand{\theoremname}{Theorem}
\begin{document}
\global\long\def\leq{\leqslant}
\global\long\def\geq{\geqslant}
\date{}

\title{A zero-sum stochastic differential game with impulses, precommitment,
and unrestricted cost functions }

\author{Parsiad Azimzadeh\thanks{David R. Cheriton School of Computer Science, University of Waterloo,
Waterloo ON, Canada N2L 3G1 {\tt \href{mailto:pazimzad@uwaterloo.ca}{pazimzad@uwaterloo.ca}}.} }
\maketitle
\begin{abstract}
We study a zero-sum stochastic differential game (SDG) in which 
one controller plays an impulse control while their opponent plays
a stochastic control. We consider an asymmetric setting in which the
impulse player commits to, at the start of the game, performing less
than $q$ impulses ($q$ can be chosen arbitrarily large). In order
to obtain the uniform continuity of the value functions, previous
works involving SDGs with impulses assume the cost of an impulse to
be decreasing in time. Our work avoids such restrictions by requiring
impulses to occur at rational times. We establish that the resulting
game admits a value, and in turn, the existence and uniqueness of
viscosity solutions to an associated Hamilton-Jacobi-Bellman-Isaacs
quasi-variational inequality.

\end{abstract}
\begin{description}
\item [{AMS~subject~classifications.}] 49L20, 49L25, 91A23, 91A15
\item [{Keywords.}] zero-sum stochastic differential game, impulse control,
quasi-variational inequalities, viscosity solutions
\end{description}

\section{\label{sec:introduction}Introduction}

\subsection{Context and literature}

The theory of \emph{stochastic differential games} (SDGs) can be traced
back to the introduction of deterministic differential games (DDGs)
by Isaacs \citep{MR0210469}. Using the notion of Elliot-Kalton strategies
\citep{MR0359845}, Evans and Souganidis considered DDGs using viscosity
theory \citep{MR756158}. This was followed by the pioneering work
of Fleming and Souganidis, who also used viscosity theory to consider
SDGs \citep{MR997385}. Since then, SDGs have been studied under different
settings (e.g. zero-sum or nonzero-sum games, nonlinear cost functions,
asymmetric information, etc.) and using various tools (e.g., backwards
stochastic differential equations, path dependent partial differential
equations, stochastic Perron's method, etc.). We list a few such works
here: \citep{MR1321134,MR1752678,MR2086176,MR2373477,MR2465706,MR2757742,MR3206980,MR3162260,MR3227460,MR3375882,MR3574709}.

The references listed above are mainly concerned with SDGs under stochastic
controls, in which the controls of both players influence the drift
and diffusion of a stochastic differential equation (SDE). We consider
instead an \emph{impulse control} problem, in which the actions of
a player affect the system in an ``instantaneous'' manner. We are
aware of only a few works \citep{MR2826978,MR3053571} that study
\emph{zero-sum SDGs with impulse control} (along with some related
studies on switching controls; e.g., \citep{MR1306930,MR2338432,MR3553921}).
This is in spite of the fact that impulse control problems have enjoyed
a resurgence (see, e.g., \citep{MR3070528,MR3071398}) due to a demand
for more realistic financial models (e.g., fixed transaction costs
and liquidity risk) \citep{MR2284012,MR2513141,MR2568293,MR3464413}
and their link to backwards stochastic differential equations \citep{MR2642892}.

Our closest related works are \citep{MR2826978,MR3053571}.   \citep{MR2826978}
considers an infinite horizon game in which one player plays an impulse
control while their opponent plays a stochastic control. Their setting
is most similar to ours, save that our game is posed on a finite horizon.
The setting of \citep{MR3053571}, on the other hand, is one in which
both players play impulse controls. 

The first distinguishing characteristic of our work is that we consider
an asymmetric setting in which the impulse player commits to, at the
start of the game, performing less than $q$ impulses ($q$ can be
chosen arbitrarily large). Ultimately, our analysis shows that this
assumption is not reflected in the resulting Hamilton-Jacobi-Bellman-Isaacs
quasi-variational inequality (HJBI-QVI) obtained from dynamic programming.
In other words, precommitment does not affect the value functions
(i.e., the game is ``robust with respect to precommitment'').

In order to obtain the uniform continuity of the value functions,
it is customarily assumed that the cost of performing an impulse decreases
with respect to time (see \citep[Assumption (H2)]{MR2826978} and
\citep[Eq. (2.6)]{MR3053571}). We are able to replace this assumption
by one that requires impulses to occur at rational times. At least
intuitively, this alternative assumption is not restrictive since
any (stopping) time can be approximated from above by a sequence of
rational (stopping) times. However, this setting introduces various
nonstandard challenges in the dynamic programming arguments. For example,
we require a dynamic programming principle (DPP) that holds only under
``strongly nonanticipative families of stopping times'' (\prettyref{def:strongly_nonanticipative_family_of_stopping_times}),
introduced with the purpose of formalizing the intuitive notion that
the decision to stop should not depend on future information from
the controls. The very weak form of dynamic programming principle
in this work results in further challenges in establishing that the
HJBI-QVI is the dynamic programming equation (DPE) associated with
the game.

Lastly, we mention here that \citep{MR2826978,MR3053571} employ a
heuristic dynamic programming principle which, to the best of our
knowledge, cannot be proved rigorously (it is this very issue that
led Fleming and Souganidis to employ the method of $\pi$-strategies
\citep[Section 2]{MR997385} in their pioneering work on SDGs). We
avoid heuristic arguments by using strongly nonanticipative strategies
with delay. 

For completeness, we also mention the works \citep{MR1264011,MR2167312,MR2166444,MR2243838,MR2341237,aid2016nonzero},
which study impulse control games in various other settings (e.g.,
as DDGs, as nonzero-sum games, etc.). We do not claim this list to
be exhaustive.

In \prettyref{sec:framework}, we establish our framework and list
our results, culminating in the value of an impulse control game and
an existence and uniqueness result for the associated HJBI-QVI. \prettyref{sec:regularity}
establishes the regularity of the upper and lower value functions
and gives existence and uniqueness results for the impulse controlled
SDE. \S\ref{sec:dpp_proof}\textendash \ref{sec:comparison_principle_proof}
gather the proofs of the results listed in \prettyref{sec:framework}.
\prettyref{sec:conclusion} discusses extensions of the model.

\subsection{\label{subsec:contribution}Setting of our game}

In our game, two players compete on a finite horizon $[t,T]$ by influencing
a stochastic process, denoted $X$. The ``sup-player'' aims to maximize
a particular function, while the ``inf-player'' aims to minimize
it.

The sup-player exerts their control by choosing impulse times $\tau_{1}\leq\tau_{2}\leq\cdots$
and impulse controls $z_{1},z_{2},\ldots$, denoted $a\coloneqq(\tau_{j},z_{j})_{j}$
for brevity. The inf-player exerts their control by choosing a process
$(b_{t})_{t}$. Letting $(W_{t})_{t}$ denote a standard Brownian
motion, between impulse times, $X$ follows the stochastic differential
equation (SDE)
\[
dX_{s}=\mu(X_{s},b_{s})ds+\sigma(X_{s},b_{s})dW_{s}.
\]
At an impulse time $\tau_{j}$, the process changes instantaneously
as a function of the corresponding impulse control $z_{j}$:
\[
X_{\tau_{j}}=X_{\tau_{j}-}+\Gamma(\tau_{j},z_{j})
\]
where $t-$ is shorthand for a limit from the left.

As is usually the case in SDGs, players play not controls $(a,b)$
but rather nonanticipative strategies $(\alpha,\beta)$. Given a functional
$J\coloneqq J(t,x;a,b)$ whose first two arguments describe the initial
time and state of the process $X$, the upper and lower values of
our game are
\[
\sup_{q\geq1}\adjustlimits\inf_{\beta}\sup_{a}J(t,x;a,\beta(a))\text{ and }\sup_{q\geq1}\adjustlimits\sup_{\alpha}\inf_{b}J(t,x;\alpha(b),b)
\]
where the integer $q$ specifies the maximum number of allowed impulses
in the control $a$ (resp. strategy $\alpha$). When the upper and
lower values coincide, we say that the game admits a \emph{value}.
The asymmetry\footnote{By asymmetry, we mean that the upper value function is not defined
as $\inf_{\beta}\sup_{q\geq1}\sup_{a}J(t,x;a,\beta(a))$.} of the value functions corresponds to the impulse player's precommitment.

\section{\label{sec:framework}Framework and statement of results}

Fix $T\in[0,\infty)$ and an $\mathbb{R}^{d_{W}}$-valued standard
Brownian motion $(W_{t})_{t\in[0,T]}$ on the canonical Wiener space
$(\Omega,\mathscr{F},\mathbb{P})$. Let $\Omega_{t,T}$ be the set
of continuous functions from $[t,T]$ to $\mathbb{R}^{d}$ starting
at zero and $\mathbb{P}_{t,T}$ its associated Wiener measure (i.e.,
$\Omega=\Omega_{0,T}$ and $\mathbb{P}=\mathbb{P}_{0,T}$). We omit
the subscripts on $\Omega$ and $\mathbb{P}$ (and the associated
expectation $\mathbb{E}$) whenever it is unambiguous to do so. We
denote by $\mathscr{F}_{t,s}$ the $\sigma$-algebra generated by
$(W_{u}-W_{t})_{u\in[t,s]}$ and augmented by all $\mathbb{P}$ null
sets. For an arbitrary subset $\mathcal{I}$ of $[t,\infty]$, we
denote by $\mathscr{T}_{t}(\mathcal{I})$ the set of all $(\mathscr{F}_{t,s})_{s\in[t,T]}$-stopping
times $\tau$ such that $\operatorname{range}(\tau)\subset\mathcal{I}$.
Lastly, we define the set $\mathbb{Q}_{t}\coloneqq([t,T]\cap\mathbb{Q})\cup\{T\}$.
\begin{defn}[Controls]
\label{def:controls}A $[t,T]$ impulse control\emph{ }is a tuple
$a\coloneqq(\tau_{j},z_{j})_{j\geq1}$ where $\tau_{j}\in\mathscr{T}_{t}(\mathbb{Q}_{t}\cup\{\infty\})$
for each $j$, each $z_{j}$ is an $\mathscr{F}_{t,\tau_{j}\wedge T}$-measurable
random variable taking values in some Borel set $Z\subset\mathbb{R}^{d_{Z}}$,
and $\tau_{1}\leq\tau_{2}\leq\cdots\leq\infty$. The set of all such
controls is denoted $\mathcal{A}(t)$.

A $[t,T]$ stochastic control is an $(\mathscr{F}_{t,s})_{s\in[t,T]}$-progressively
measurable process $b\coloneqq(b_{s})_{s\in[t,T]}$ taking values
in some Borel set $B\subset\mathbb{R}^{d_{B}}$. The set of all such
controls is denoted $\mathcal{B}(t)$.
\end{defn}
\begin{rem*}The condition $\tau_{j}\in\mathscr{T}_{t}(\mathbb{Q}_{t}\cup\{\infty\})$
in the definition above disallows impulses occurring at irrational
times. This condition is required to establish the DPE without requiring
the a priori uniform continuity of the value functions, allowing us
to consider a larger class of gain functionals in defining the value
functions associated with the game (see \prettyref{rem:a_priori_continuity}). 

At least intuitively, this condition is not restrictive since any
stopping time can be approximated (from above) by a sequence of stopping
times taking rational values. While we can replace $\mathbb{Q}$ by
any dense countable subset of $\mathbb{R}$, we avoid this generality
so as to not overburden the notation.\end{rem*}

Given controls $a$ and $b$ as above, the relevant SDE (with impulses)
is
\begin{equation}
X_{s}=x+\int_{t}^{s}\mu(X_{u},b_{u})du+\int_{t}^{s}\sigma(X_{u},b_{u})dW_{u}+\sum_{\tau_{j}\leq s}\Gamma(\tau_{j},z_{j})\text{ for }s\in[t,T].\label{eq:sde}
\end{equation}
If it exists and is unique, we use $X^{t,x;a,b}$ to denote a solution
(see \prettyref{def:admissible_control}) to \eqref{eq:sde}. The
gain (resp. cost) functional for the sup(resp. inf)-player is given
by
\[
J(t,x;a,b)\coloneqq\mathbb{E}\left[\int_{t}^{T}f(s,X_{s},b_{s})ds+\sum_{\tau_{j}\leq T}K(\tau_{j},z_{j})+g(X_{T})\right]
\]
where it is understood that $X\coloneqq X^{t,x;a,b}$. It is convenient
at this point to also define the intervention operator $\mathcal{M}$,
which (roughly speaking) describes the value of the game immediately
after an optimal impulse:
\begin{equation}
\mathcal{M}u(t,x)\coloneqq\sup_{z\in Z}\left\{ u(t,x+\Gamma(t,z))+K(t,z)\right\} .\label{eq:intervention}
\end{equation}

We are now ready to introduce admissible controls and strategies.
\begin{defn}[Admissible impulse control]
\label{def:admissible_control}A $[t,T]$ impulse control $a\in\mathcal{A}(t)$
is admissible at $x\in\mathbb{R}^{d}$ if for each $b\in\mathcal{B}(t)$,
a solution of \eqref{eq:sde} exists and is unique. The set of all
such controls is denoted $\mathcal{A}(t,x)$.

By a \emph{solution} $X\coloneqq X^{t,x;a,b}$, we mean that $X$
is $(\mathscr{F}_{t,s})_{s\in[t,T]}$-adapted, has càdlàg paths, is
in $\mathbb{L}^{2}(\Omega_{t,T}\times[t,T])$, and satisfies \eqref{eq:sde}.
Uniqueness is determined up to indistinguishability.
\end{defn}
We now introduce certain subsets of $\mathcal{A}(t,x)$ and $\mathcal{A}(t)$
which are used in our analysis. Below, for an impulse control $a\coloneqq(\tau_{j},z_{j})_{j\geq1}\in\mathcal{A}(t)$,
we use
\[
(\#a)_{s}\coloneqq0\vee\sup\{j\geq1\colon\tau_{j}\leq s\}
\]
to denote its total number of impulses on $[t,s]$ (recall that $\sup\emptyset=-\infty$).
\begin{defn}[Impulse control subsets]
For each integer $q\geq1$, let $\mathcal{A}^{q}(t,x)$ be the set
of all impulse controls $a\in\mathcal{A}(t,x)$ such that 
\[
\mathbb{P}(S(a))=1\text{ where }S(a)\coloneqq\{(\#a)_{T}<q\}.
\]
For each integer $q\geq1$ and Borel set $Q\subset\mathbb{R}^{d_{Z}}$,
let $\mathcal{A}^{q,Q}(t)$ be the set of all impulse controls $a\coloneqq(\tau_{j},z_{j})_{j\geq1}\in\mathcal{A}(t)$
such that 
\[
\mathbb{P}(S(a)\cap\left(\cap_{j\geq1}S_{j}(a)\right))=1\text{ where }S_{j}(a)\coloneqq\left\{ (\#a)_{T}<j\right\} \cup\left\{ z_{j}\in Q\right\} .
\]
\end{defn}
Intuitively, $\mathcal{A}^{q}(t,x)$ is the set of \emph{admissible}
impulse controls with less than $q$ impulses. Similarly, $\mathcal{A}^{q,Q}(t)$
is the set of impulse controls with less than $q$ impulses and with
each impulse contained in the set $Q$.

Let $\bar{t},t\in[0,T]$ with $\bar{t}\leq t$. Given $\omega\in\Omega_{\bar{t},T}$,
we define $(\omega_{1},\omega_{2})$ by 
\begin{align*}
\omega_{1} & \coloneqq\omega|_{[\bar{t},t]}\\
\text{and }\omega_{2} & \coloneqq\left(\omega-\omega(t)\right)|_{[t,T]},
\end{align*}
identifying $\Omega_{\bar{t},T}$ with $\Omega_{\bar{t},t}\times\Omega_{t,T}$
along with $\mathbb{P}_{\bar{t},T}=\mathbb{P}_{\bar{t},t}\otimes\mathbb{P}_{t,T}$.
Fixing an integer $q\geq1$ and a Borel set $Q\subset\mathbb{R}^{d_{Z}}$,
we notice that
\begin{itemize}
\item for $b\in\mathcal{B}(\bar{t})$, the control $b|_{[t,T]}(\omega_{1})$
defined by $(b|_{[t,T]}(\omega_{1}))(\omega_{2})_{s}\coloneqq b(\omega)_{s}$
is a member of $\mathcal{B}(t)$ for $\mathbb{P}_{\bar{t},t}$-almost
all $\omega_{1}$;
\item for $a\coloneqq(\tau_{j},z_{j})_{j\geq1}\in\mathcal{A}^{q,Q}(\bar{t})$,
the control $a|_{(t,T]}(\omega_{1})\coloneqq(\hat{\tau}_{j}(\omega_{1}),\hat{z}_{j}(\omega_{1}))_{j\geq1}$
defined by $(\hat{\tau}_{j}(\omega_{1}))(\omega_{2})\coloneqq\tau_{(\#a)_{t}+j}(\omega)$
and $(\hat{z}_{j}(\omega_{1}))(\omega_{2})\coloneqq z_{(\#a)_{t}+j}(\omega)$
is a member of $\mathcal{A}^{q,Q}(t)$ for $\mathbb{P}_{\bar{t},t}$-almost
all $\omega_{1}$.
\end{itemize}
Before we give the next definition, we mention that for $\tau,\tau^{\prime},\tau^{\prime\prime}\in\mathscr{T}_{t}([t,\infty])$,
we write $\tau^{\prime}\equiv\tau^{\prime\prime}$ on $\llbracket t,\tau\rrbracket$
if the claim
\[
\mathbf{1}_{\{\tau^{\prime}\leq s\}}(\omega)=\mathbf{1}_{\{\tau^{\prime\prime}\leq s\}}(\omega)\text{ for all }s\in[t,\tau(\omega)]
\]
holds for $\mathbb{P}$-almost all $\omega$.
\begin{defn}[Control identification]
For $b,b^{\prime}\in\mathcal{B}(t)$ and $\tau\in\mathscr{T}_{t}([t,T])$,
we write $b\equiv b^{\prime}$ on $\llbracket t,\tau\rrbracket$ if
the claim
\[
b_{s}(\omega)=b_{s}^{\prime}(\omega)\text{ for almost every }s\in[t,\tau(\omega)]
\]
holds for $\mathbb{P}$-almost all $\omega$.

Similarly, for $a\coloneqq(\tau_{j},z_{j})_{j}$ and $a^{\prime}\coloneqq(\tau_{j}^{\prime},z_{j}^{\prime})_{j}$
in $\mathcal{A}(t)$ and $\tau\in\mathscr{T}_{t}([t,T])$, we write
$a\equiv a^{\prime}$ on $\llbracket t,\tau\rrbracket$ if $\tau_{j}\equiv\tau_{j}^{\prime}$
on $\llbracket t,\tau\rrbracket$ for each $j$ and the claim
\[
z_{j}(\omega)=z_{j}^{\prime}(\omega)\text{ for each }j\text{ such that }\tau_{j}(\omega)\leq\tau(\omega)
\]
holds for $\mathbb{P}$-almost all $\omega$.
\end{defn}
\begin{defn}[Strategies]
 $\alpha:\mathcal{B}(t)\rightarrow\mathcal{A}(t)$ is an impulse
strategy if it is \begin{enumerate}[label=(\roman{enumi}),ref=(\roman{enumi}),start=1]

\item strongly nonanticipative: for each $b,b^{\prime}\in\mathcal{B}(t)$
and $\tau\in\mathscr{T}_{t}([t,T])$, $\alpha(b)\equiv\alpha(b^{\prime})$
on $\llbracket t,\tau\rrbracket$ whenever $b\equiv b^{\prime}$ on
$\llbracket t,\tau\rrbracket$;

\item delayed: there is a partition $t=t_{0}<t_{1}<\cdots<t_{m}=T$
such that for each $b,b^{\prime}\in\mathcal{B}(t)$ and $i<m$, $\alpha(b)\equiv\alpha(b^{\prime})$
on $[t,t_{i+1}]$ whenever $b\equiv b^{\prime}$ on $[t,t_{i}]$.\end{enumerate}
The set of all such strategies is denoted $\mathscr{A}(t)$.

Moreover, for each integer $q\geq1$ and Borel set $Q\subset\mathbb{R}^{d_{Z}}$,
it is useful to define $\mathscr{A}(t,x)$, $\mathscr{A}^{q}(t,x)$,
and $\mathscr{A}^{q,Q}(t)$ as the set of all impulse strategies $\alpha\in\mathscr{A}(t)$
with $\operatorname{range}(\alpha)\subset\mathcal{A}(t,x)$, $\operatorname{range}(\alpha)\subset\mathcal{A}^{q}(t,x)$,
and $\operatorname{range}(\alpha)\subset\mathcal{A}^{q,Q}(t)$, respectively.

$\beta:\mathcal{A}(t)\rightarrow\mathcal{B}(t)$ is a stochastic strategy
if it is \begin{enumerate}[label=(\roman{enumi}),ref=(\roman{enumi}),start=1]

\item strongly nonanticipative;

\item delayed;

\item an r-strategy: for each integer $q\geq1$, Borel set $Q\subset\mathbb{R}^{d_{Z}}$,
times $\bar{t},t\in[0,T]$ with $\bar{t}<t$, and impulse control
$a\in\mathcal{A}^{q,Q}(\bar{t})$, the process $\beta(a|_{(t,T]})$
is $(\mathscr{F}_{\bar{t},s})_{s\in[\bar{t},T]}$-progressively measurable.
\end{enumerate} The set of all such strategies is denoted $\mathscr{B}(t)$.
\end{defn}
\emph{Strong nonanticipativity} disallows a player from using future
information from their opponent's control. Further explanation is
given in \citep{MR2086176}. Strategies with \emph{delay} are used
to ensure that the upper value of the game is no less than the lower
value.\emph{ r-strategies} (for restricted) were introduced in \citep[Definition 1.7]{MR997385}
to overcome certain measurability issues.

We introduce below the concept of a \emph{strongly nonanticipative
family of stopping times} that formalizes the intuitive notion that
the decision to stop should not depend on future information from
the controls.  These are used to ensure that the strategies constructed
in the proof of the DPP are strongly nonanticipative. The idea is
similar to the definition of admissible stopping strategies in \citep[Section 3]{MR3036989},
which serve a similar purpose.
\begin{defn}
\label{def:strongly_nonanticipative_family_of_stopping_times}Let
$t\in[0,T]$ and $\mathcal{S}\coloneqq\{\theta^{a,b}\}_{(a,b)\in\mathcal{A}(t)\times\mathcal{B}(t)}$
be a subset of $\mathscr{T}_{t}([t,T])$ whose members are indexed
by the control tuple $(a,b)\in\mathcal{A}(t)\times\mathcal{B}(t)$.
We say $\mathcal{S}$ is a strongly nonanticipative family of stopping
times if for each $(a,b),(a^{\prime},b^{\prime})\in\mathcal{A}(t)\times\mathcal{B}(t)$
and $\tau\in\mathscr{T}_{t}([t,T])$,
\[
\theta^{a,b}\equiv\theta^{a^{\prime},b^{\prime}}\text{ on }\left\llbracket t,\tau\right\rrbracket \text{ whenever }a\equiv a^{\prime}\text{ and }b\equiv b^{\prime}\text{ on }\left\llbracket t,\tau\right\rrbracket .
\]
\end{defn}
We are now ready to introduce the upper and lower values of the game:
\[
v^{+}(t,x)\coloneqq\sup_{q\geq1}\adjustlimits\inf_{\beta\in\mathscr{B}(t)}\sup_{a\in\mathcal{A}^{q}(t,x)}J(t,x;a,\beta(a))\text{ and }v^{-}(t,x)\coloneqq\sup_{q\geq1}\adjustlimits\sup_{\alpha\in\mathscr{A}^{q}(t,x)}\inf_{b\in\mathcal{B}(t)}J(t,x;\alpha(b),b).
\]
The game is said to admit a value if $v^{+}=v^{-}$ pointwise (on
$[0,T]\times\mathbb{R}^{d}$).

We gather some assumptions below, which are understood to hold throughout
the text.

\begin{assumption}\label{assu:stochastic_assumptions}\begin{enumerate}[label=(\roman{enumi}),ref=(\roman{enumi}),start=1]

\item $B\subset\mathbb{R}^{d_{B}}$ is compact and nonempty;

\item $\mu:\mathbb{R}^{d}\times B\rightarrow\mathbb{R}^{d}$ and
$\sigma:\mathbb{R}^{d}\times B\rightarrow\mathbb{R}^{d\times d_{W}}$
are Lipschitz in $x$ (uniformly in $b$):
\[
\left|\mu(x,b)-\mu(y,b)\right|+\left|\sigma(x,b)-\sigma(y,b)\right|\leq\operatorname{const.}\left|x-y\right|
\]
and continuous;

\item $f:[0,T]\times\mathbb{R}^{d}\times B\rightarrow\mathbb{R}$
and $g:\mathbb{R}^{d}\rightarrow\mathbb{R}$ are bounded and continuous.

\end{enumerate}\end{assumption}

\begin{assumption}\label{assu:impulse_assumptions}\begin{enumerate}[label=(\roman{enumi}),ref=(\roman{enumi}),start=1]

\item $Z\subset\mathbb{R}^{d_{Z}}$ is closed and nonempty;

\item $\Gamma:[0,T]\times Z\rightarrow\mathbb{R}^{d}$ and $K:[0,T]\times Z\rightarrow\mathbb{R}$
are continuous;

\item \label{enu:negative_growth_condition} $z\mapsto K(t,z)\in\omega(1)$
as $|z|\rightarrow\infty$ (uniformly in $t$)\footnote{Here, $\omega$ is the Bachmann\textendash Landau symbol. Precisely,
we mean that for each $c>0$, there exists an $r>0$ such that for
all $(t,z)\in[0,T]\times Z$ with $|z|>r$, $|K(t,z)|\geq c$.} and there exists a positive constant $K_{0}$ such that $K\leq-K_{0}$
pointwise.

\end{enumerate}\end{assumption}

\begin{assumption}\label{assu:dpp_and_dpe_assumptions}\begin{enumerate}[label=(\roman{enumi}),ref=(\roman{enumi}),start=1]

\item $f$ and $g$ are Lipschitz in $x$ (uniformly in $t$ and
$b$):
\[
\left|f(t,x,b)-f(t,y,b)\right|+\left|g(x)-g(y)\right|\leq\operatorname{const.}\left|x-y\right|;
\]

\item\label{enu:multiple_impulses_are_suboptimal}for each $t\in[0,T]$
and $z_{1},z_{2}\in Z$, there exists $z\in Z$ such that $\Gamma(t,z)=\Gamma(t,z_{1})+\Gamma(t,z_{2})$
and $K(t,z)\geq K(t,z_{1})+K(t,z_{2})$;

\item\label{enu:enforce_boundary_conditions}for each $(t_{n},x_{n})_{n}$
in $[0,T]\times\mathbb{R}^{d}$ converging to $(T,x)$, $\liminf_{n\rightarrow\infty}v^{-}(t_{n},x_{n})\geq g(x)$
and if $v^{+}(t_{n},x_{n})>\mathcal{M}v^{+}(t_{n},x_{n})$ for all
$n$, $\limsup_{n\rightarrow\infty}v^{+}(t_{n},x_{n})\leq g(x)$.

\end{enumerate}\end{assumption}

Before continuing, we discuss briefly the significance of the assumptions
listed above.
\begin{itemize}
\item \prettyref{assu:stochastic_assumptions} and \prettyref{assu:impulse_assumptions}
are used throughout. In particular, \prettyref{assu:impulse_assumptions}
\ref{enu:negative_growth_condition} ensures that optimal impulses
are contained in a compact set.
\item \prettyref{assu:dpp_and_dpe_assumptions} is used only to establish
the DPP and DPE; it is not needed in the proof of the comparison principle.
\ref{enu:multiple_impulses_are_suboptimal} ensures that multiple
impulses occuring at the same time are suboptimal.  \ref{enu:enforce_boundary_conditions},
which is identical to \citep[Assumption (E3)]{MR2568293}, is introduced
to avoid a detailed analysis of the value functions at the terminal
time, which is not the main focus of this work.  \prettyref{exa:terminal_continuity}
presents a situation in which \ref{enu:enforce_boundary_conditions}
is satisfied.
\end{itemize}
For a locally bounded above (resp. below) function $u$ from some
metric space $Y$ to $\mathbb{R}$, we use $u^{*}$ (resp. $u_{*}$)
to denote the upper (resp. lower) semicontinuous envelope of $u$.
 Unless otherwise mentioned, $Y$ is taken to be $[0,T]\times\mathbb{R}^{d}$.

We are now in a position to state the main results of this work. For
brevity, let $\mathcal{O}\coloneqq[0,T)\times\mathbb{R}^{d}$, $\partial^{+}\mathcal{O}\coloneqq\{T\}\times\mathbb{R}^{d}$
denote the parabolic boundary of $\mathcal{O}$, and 
\begin{equation}
J_{0}(t,x;a,b;\theta)\coloneqq\int_{t}^{\theta}f(s,X_{s},b_{s})ds+\sum_{\tau_{j}\leq\theta}K(\tau_{j},z_{j})\label{eq:J_0}
\end{equation}
where $\theta$ is a stopping time in $\mathscr{T}_{t}([t,T])$.
 We begin with a ``weak'' DPP in the spirit of Bouchard and Touzi
\citep{MR2806570}, whose proof is given in \prettyref{sec:dpp_proof}.
\begin{thm}[DPP]
\label{thm:dpp}There exists a compact set $Q\subset\mathbb{R}^{d_{Z}}$
such that for each $(t,x)\in\mathcal{O}$ and each strongly nonanticipative
family of stopping times $\{\theta^{a,b}\}_{(a,b)\in\mathcal{A}(t)\times\mathcal{B}(t)}\subset\mathscr{T}_{t}(\mathbb{Q}_{t})$,
the following statements hold:

\begin{enumerate}[label=(\roman{enumi}),ref=(\roman{enumi}),start=1]\item
 letting $X\coloneqq X^{t,x;a,\beta(a)}$ and $\theta\coloneqq\theta^{a,\beta(a)}$,
\[
v^{+}(t,x)\leq\sup_{q\geq1}\adjustlimits\inf_{\beta\in\mathscr{B}(t)}\sup_{a\in\mathcal{A}^{q,Q}(t)}\mathbb{E}\left[J_{0}(t,x;a,\beta(a);\theta)+(v^{+})^{*}(\theta,X_{\theta})\right];
\]

\item  letting $X\coloneqq X^{t,x;\alpha(b),b}$ and $\theta\coloneqq\theta^{\alpha(b),b}$,
\[
v^{-}(t,x)\geq\sup_{q\geq1}\adjustlimits\sup_{\alpha\in\mathscr{A}^{q,Q}(t)}\inf_{b\in\mathcal{B}(t)}\mathbb{E}\left[J_{0}(t,x;\alpha(b),b;\theta)+(v^{-})_{*}(\theta,X_{\theta})\right].
\]
\end{enumerate}
\end{thm}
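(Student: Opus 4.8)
The plan is to adapt the weak dynamic programming argument of Bouchard and Touzi \citep{MR2806570} to the present game, replacing their single control by the pair $(\text{strategy},\text{control})$ and carrying along the precommitment index $q$ and the restriction of the sup-player's impulses to $\mathbb{Q}_{t}$. Two preliminary facts are needed. \emph{(a)}~By \prettyref{assu:impulse_assumptions}\ref{enu:negative_growth_condition} and the boundedness of $f,g$ (\prettyref{assu:stochastic_assumptions}), every $\varepsilon$-optimal impulse of the sup-player lies in a fixed compact $Q\subset\mathbb{R}^{d_{Z}}$; since \eqref{eq:sde} is well posed for impulses in a compact set (\prettyref{sec:regularity}), $\mathcal{A}^{q,Q}(t)\subset\mathcal{A}(t,x)$ and the $\mathcal{A}^{q}(t,x)$ in the definition of $v^{\pm}$ may be replaced by $\mathcal{A}^{q,Q}(t)$, so the control sets on the right-hand sides are harmless. \emph{(b)}~For $\theta\in\mathscr{T}_{t}(\mathbb{Q}_{t})$ I would prove a \emph{flow property}: the restrictions $a|_{(\theta,T]},b|_{(\theta,T]}$ are (a.s.)\ elements of $\mathcal{A}(\theta),\mathcal{B}(\theta)$, the shifts of $\alpha\in\mathscr{A}(t),\beta\in\mathscr{B}(t)$ are strategies on $[\theta,T]$ (using that they are strongly nonanticipative, delayed, and—for $\beta$—an r-strategy), and, with $X\coloneqq X^{t,x;a,b}$, one has $X_{s}=X_{s}^{\theta,X_{\theta};\,a|_{(\theta,T]},\,b|_{(\theta,T]}}$ for $s\geq\theta$ together with $J(t,x;a,b)=\mathbb{E}[J_{0}(t,x;a,b;\theta)+J(\theta,X_{\theta};a|_{(\theta,T]},b|_{(\theta,T]})]$ by the tower property. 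A consequence, crucial below, is that whenever $\{\theta^{a,b}\}$ is a strongly nonanticipative family (\prettyref{def:strongly_nonanticipative_family_of_stopping_times}), the quantity inside $\mathbb{E}[\,\cdot\,]$ on the right-hand side of the theorem depends on the strategy only through its restriction to $\llbracket t,\theta\rrbracket$ (since $\theta^{a,\cdot}$, $J_{0}(t,x;a,\cdot;\theta)$ and $X_{\theta}$ do). That $\theta$ is $\mathbb{Q}_{t}$-valued is exactly what makes the restart legitimate without a priori continuity of $v^{\pm}$: the time coordinate of $(\theta,X_{\theta})$ then ranges over the countable set $\mathbb{Q}_{t}$.

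For \emph{(i)}, fix $(t,x)\in\mathcal{O}$, a strongly nonanticipative family $\{\theta^{a,b}\}\subset\mathscr{T}_{t}(\mathbb{Q}_{t})$, $\varepsilon>0$, and $q$ with $\inf_{\beta}\sup_{a\in\mathcal{A}^{q,Q}(t)}J(t,x;a,\beta(a))>v^{+}(t,x)-\varepsilon$. Given an arbitrary $\beta\in\mathscr{B}(t)$, I would \emph{construct} an inf-strategy $\beta^{\mathrm{sp}}\in\mathscr{B}(t)$ that agrees with $\beta$ on $\llbracket t,\theta\rrbracket$ and, from $\theta$ onward, plays an $\varepsilon$-optimal continuation inf-strategy at $(\theta,X_{\theta})$ (at level $q$), these continuations being selected measurably over $\mathbb{Q}_{t}\times\mathbb{R}^{d}$ as described below. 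By the flow property, $J(t,x;a,\beta^{\mathrm{sp}}(a))=\mathbb{E}[J_{0}(t,x;a,\beta(a);\theta)+J(\theta,X_{\theta};a|_{(\theta,T]},\beta^{\mathrm{sp}}(a)|_{(\theta,T]})]$ with $X\coloneqq X^{t,x;a,\beta(a)}$ and $\theta\coloneqq\theta^{a,\beta(a)}$, and the conditional continuation term is $\leq v^{+}(\theta,X_{\theta})+\varepsilon\leq(v^{+})^{*}(\theta,X_{\theta})+\varepsilon$ because the $\varepsilon$-optimal continuation holds the sup-player (with $<q$ remaining impulses) down to that level; hence $\sup_{a}J(t,x;a,\beta^{\mathrm{sp}}(a))\leq\sup_{a}\mathbb{E}[J_{0}(t,x;a,\beta(a);\theta)+(v^{+})^{*}(\theta,X_{\theta})]+\varepsilon$. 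Since $\beta^{\mathrm{sp}}\equiv\beta$ on $\llbracket t,\theta\rrbracket$, the right-hand side here equals the value of the theorem's right-hand expression at $\beta^{\mathrm{sp}}$ (paragraph above); and since $\beta^{\mathrm{sp}}\in\mathscr{B}(t)$, its left-hand side is $\geq\inf_{\beta'}\sup_{a}J(t,x;a,\beta'(a))>v^{+}(t,x)-\varepsilon$. As $\beta$ was arbitrary, $\inf_{\beta}\sup_{a}\mathbb{E}[J_{0}+(v^{+})^{*}(\theta,X_{\theta})]\geq v^{+}(t,x)-2\varepsilon$, and letting $\varepsilon\downarrow0$ and taking $\sup_{q}$ gives (i).

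For \emph{(ii)}, fix $q\geq1$, $\alpha\in\mathscr{A}^{q,Q}(t)$, $\varepsilon>0$, and \emph{construct} $\tilde\alpha\in\mathscr{A}(t)$ that agrees with $\alpha$ on $\llbracket t,\theta^{\alpha(b),b}\rrbracket$ and, from $\theta^{\alpha(b),b}$ onward, plays an $\varepsilon$-optimal continuation impulse strategy $\alpha''$ for the impulse player at $(\theta^{\alpha(b),b},X_{\theta})$ (with $X\coloneqq X^{t,x;\alpha(b),b}$), again selected measurably. Since $\{\theta^{a,b}\}$ is a strongly nonanticipative family, replacing $\alpha$ by $\tilde\alpha$ changes neither $\theta^{\alpha(b),b}$ nor $X$ nor $J_{0}$ on $\llbracket t,\theta\rrbracket$; and since $\alpha''$ guarantees $\inf_{b'}J(\theta,X_{\theta};\alpha''(b'),b')\geq v^{-}(\theta,X_{\theta})-\varepsilon\geq(v^{-})_{*}(\theta,X_{\theta})-\varepsilon$ against every continuation control, the flow property gives, for every $b\in\mathcal{B}(t)$,
\[
J(t,x;\tilde\alpha(b),b)\;\geq\;\mathbb{E}\big[J_{0}(t,x;\alpha(b),b;\theta^{\alpha(b),b})+(v^{-})_{*}(\theta^{\alpha(b),b},X_{\theta})\big]-\varepsilon .
\]
Taking $\inf_{b}$, observing $\tilde\alpha\in\mathscr{A}^{q'}(t,x)$ for a suitable $q'\geq q$ (so that $v^{-}(t,x)\geq\inf_{b}J(t,x;\tilde\alpha(b),b)$), and then $\sup_{\alpha},\sup_{q}$ and $\varepsilon\downarrow0$ yields (ii).

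The genuine difficulty—common to both parts—is the construction of the glued strategies and the verification that they belong to $\mathscr{B}(t)$, resp.\ $\mathscr{A}^{q'}(t,x)$. The continuation strategies $\alpha''_{s,y}$ (resp.\ $\beta''_{s,y}$) must depend measurably on $(s,y)$: since $\theta$ is $\mathbb{Q}_{t}$-valued one works one rational time at a time, covers $\mathbb{R}^{d}$ by countably many small balls, fixes on each ball a single near-optimal strategy at its centre, and invokes the uniform Lipschitz-in-$x$ estimates for $v^{\pm}$ and for $J$ from \prettyref{sec:regularity} (with \prettyref{assu:dpp_and_dpe_assumptions}\ref{enu:multiple_impulses_are_suboptimal} ruling out coincident impulses and \prettyref{assu:impulse_assumptions}\ref{enu:negative_growth_condition} bounding the number of continuation impulses, which is what fixes $q'$) so that the chosen strategy stays $2\varepsilon$-optimal throughout the ball. \emph{Strong nonanticipativity} of the glued strategy is exactly where \prettyref{def:strongly_nonanticipative_family_of_stopping_times} enters: if $b\equiv b'$ on $\llbracket t,\tau\rrbracket$ then the pre-$\theta$ parts agree, hence $\theta^{\cdot,\cdot}\equiv\theta^{\cdot,\cdot}$ there, hence the switch happens ``at the same time'' and the continuation pieces coincide on the overlap; without the hypothesis that the stopping family is strongly nonanticipative this breaks down, which is the analogue of the obstruction that led \citep{MR997385} to introduce $\pi$-strategies. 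The \emph{delay} property is recovered by intersecting the partition of $\alpha$ (resp.\ $\beta$) with those of the chosen continuations—one must keep the cover locally finite so that only finitely many partitions are ever relevant—and the identity $\theta^{\cdot,\tilde\alpha(\cdot)}=\theta^{\cdot,\alpha(\cdot)}$ (resp.\ $\theta^{a,\beta^{\mathrm{sp}}(a)}=\theta^{a,\beta(a)}$), which makes the glued object well defined and underlies the reductions above, is itself a consequence of the strong nonanticipativity of $\{\theta^{a,b}\}$.
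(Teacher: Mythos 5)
Your architecture is essentially the paper's: reduce to the compact set $Q$ of \prettyref{lem:control_reduction}; build a countable Borelian partition of $\mathbb{Q}_{t}\times\mathbb{R}^{d}$ one rational time at a time, via Lindel\"of on balls whose radii come from the uniform Lipschitz-in-$x$ estimate of \prettyref{lem:lipschitz_functional}; glue near-optimal continuation strategies chosen at the ball centres onto the given strategy after $\theta$; and use the tower property together with the strong nonanticipativity of $\{\theta^{a,b}\}$ to check that the glued object is again a (strongly nonanticipative, delayed, r-) strategy. All of the main ideas match.

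The one step that fails as written is your treatment of the delay property: ``keep the cover locally finite so that only finitely many partitions are ever relevant'' cannot be arranged, because the rational times $t_{i}$ are dense in $[t,T]$ and each contributes countably many balls, so every neighbourhood of every $s>t$ meets infinitely many cells, and a common refinement of the delay partitions of infinitely many continuation strategies need not exist. The paper's resolution is to glue only the finitely many continuations indexed by $A^{n}\coloneqq\cup_{i\leq n}\cup_{j\leq n}A_{i,j}$ and to let the strategy default to the original $\beta$ (resp.\ $\alpha$) off $A^{n}$; the delay property is then immediate, and the error committed off $A^{n}$ is sent to zero as $n\rightarrow\infty$ by dominated convergence, \emph{uniformly} over $a\in\mathcal{A}^{\hat{q},Q}(t)$ thanks to the uniform bound of \prettyref{lem:bounded_functional} (this uniformity is what allows a single $n_{0}$ to serve all $a$ before taking $\sup_{a}$). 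With that device inserted your argument goes through. The only other divergence is harmless: the paper passes through a continuous majorant $\varphi\geq v^{+}$ and then a monotone approximation of $(v^{+})^{*}$, whereas you use $v^{+}\leq(v^{+})^{*}$ directly together with the Lipschitz continuity of $v^{\pm}$ in $x$ from \prettyref{lem:lipschitz_functional}; both are valid.
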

The HJBI-QVI associated with the game is a quasi-variational inequality:
\begin{equation}
0=F(\cdot,u,Du(\cdot),D^{2}u(\cdot))\coloneqq\begin{cases}
\min\{-\inf_{b\in B}\{(\partial_{t}+\text{\L}^{b})u+f^{b}\},u-\mathcal{M}u\} & \text{on }\mathcal{O}\\
\min\{u-g,u-\mathcal{M}u\} & \text{on }\partial^{+}\mathcal{O}
\end{cases}\label{eq:formal_hjbi}
\end{equation}
where $f^{b}(t,x)\coloneqq f(t,x,b)$, $\mathcal{M}$ is defined by
\eqref{eq:intervention}, and
\[
\text{\L}^{b}u(t,x)\coloneqq\frac{1}{2}\operatorname{trace}(\sigma(x,b)\sigma^{\intercal}(x,b)D_{x}^{2}u(t,x))+\left\langle \mu(x,b),D_{x}u(t,x)\right\rangle .
\]
We point out that due to the operator $\mathcal{M}$, \eqref{eq:formal_hjbi}
is nonlocal in its use of $u$. No second time derivatives appear
and so we use $D^{2}$ and $D_{x}^{2}$ interchangeably, while $D$
is interpreted to mean either $(\partial_{t},D_{x})$ or $D_{x}$,
depending on context. Since the above is only formal, we need to ascribe
meaning to the notion of a ``solution'' to \eqref{eq:formal_hjbi}:
\begin{defn}[Viscosity solution]
\label{def:viscosity_solution} A locally bounded above (resp. below)
function $u:\operatorname{cl}\mathcal{O}\rightarrow\mathbb{R}$ is
a viscosity subsolution\emph{ }(resp. supersolution) of \eqref{eq:formal_hjbi}
if, letting $w\coloneqq u^{*}$ (resp. $w\coloneqq u_{*}$), for all
$(t,x,\varphi)\in\mathcal{O}\times C^{1,2}(\mathcal{O})$ such that
$(w-\varphi)(t,x)$ is a local maximum (resp. minimum) of $w-\varphi$,\footnote{The symbol $w-\varphi$ should be understood subject to the convention
$f_{A}+f_{B}\coloneqq f_{A}|_{B}+f_{B}$ whenever $f_{A}:A\rightarrow\mathbb{R}$,
$f_{B}:B\rightarrow\mathbb{R}$, and $B\subset A$.}
\[
F(t,x,w,D\varphi(t,x),D^{2}\varphi(t,x))\leq0\text{ (resp. }\geq0\text{)}
\]
and for all $(t,x)\in\partial^{+}\mathcal{O}$, $F(t,x,w)\leq0$ (resp.
$\geq0$).\footnote{We have abused notation slightly in writing $F(t,x,w)$ since the
derivatives $(Du,D^{2}u)$ do not appear on the boundary $\partial^{+}\mathcal{O}$.}

$u$ is said to be a viscosity solution of \eqref{eq:formal_hjbi}
whenever it is simultaneously a supersolution and subsolution of \eqref{eq:formal_hjbi}.
\end{defn}
We hereafter drop the term ``viscosity'' in our discussions, since
it is the main solution concept in this work.

We can now state the relationship between the upper and lower value
functions of the game and the HJBI-QVI; namely that the HJBI-QVI is
the DPE associated with the game. A proof of this fact is given in
\prettyref{sec:dpe_proof}.
\begin{thm}[DPE]
\label{thm:dpe}$v^{+}$ (resp. $v^{-}$) is a bounded subsolution
(resp. supersolution) of \eqref{eq:formal_hjbi}.
\end{thm}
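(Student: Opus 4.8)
The proof checks the two requirements of \prettyref{def:viscosity_solution} in turn --- boundedness together with the behaviour on $\partial^{+}\mathcal{O}$, and then the viscosity inequalities on $\mathcal{O}$, the latter by a weak dynamic programming argument in the spirit of Bouchard and Touzi, fed by \prettyref{thm:dpp}. Boundedness is immediate: $f$ and $g$ are bounded and $K\le-K_{0}<0$, so discarding the nonpositive impulse cost gives $J(t,x;a,b)\le\|f\|_{\infty}T+\|g\|_{\infty}$ for every $(a,b)$, while the never-impulse control gives $J\ge-\|f\|_{\infty}T-\|g\|_{\infty}$; hence $v^{+}$ and $v^{-}$ are bounded. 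The nonlocal clause of the supersolution property for $v^{-}$, i.e.\ $(v^{-})_{*}\ge\mathcal{M}(v^{-})_{*}$, I would obtain directly (this is essentially a regularity statement about $v^{-}$): the sup-player may impulse at a rational time arbitrarily close to the start and then continue near-optimally (a standard measurable-selection argument), so a Fatou argument yields $v^{-}\ge\mathcal{M}(v^{-})_{*}$ pointwise, and since $\mathcal{M}$ maps lower semicontinuous functions to lower semicontinuous ones --- the growth condition \prettyref{assu:impulse_assumptions} \ref{enu:negative_growth_condition} confines the supremum in \eqref{eq:intervention} to a compact set --- this upgrades to $(v^{-})_{*}\ge\mathcal{M}(v^{-})_{*}$. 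The boundary conditions on $\partial^{+}\mathcal{O}$ come from \prettyref{assu:dpp_and_dpe_assumptions} \ref{enu:enforce_boundary_conditions}: for $v^{-}$ the bound $(v^{-})_{*}(T,x)\ge g(x)$ is its first half, while for $v^{+}$, if $(v^{+})^{*}(T,x)>\mathcal{M}(v^{+})^{*}(T,x)$, picking $(t_{n},x_{n})\to(T,x)$ with $v^{+}(t_{n},x_{n})\to(v^{+})^{*}(T,x)$ and using the upper semicontinuity of $\mathcal{M}(v^{+})^{*}$ forces $v^{+}(t_{n},x_{n})>\mathcal{M}v^{+}(t_{n},x_{n})$ for all large $n$, so the second half of the assumption yields $(v^{+})^{*}(T,x)\le g(x)$.

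For the interior subsolution property of $v^{+}$, fix $(t_{0},x_{0})\in\mathcal{O}$ and $\varphi\in C^{1,2}$ with $(v^{+})^{*}-\varphi$ having a local maximum $0$ at $(t_{0},x_{0})$, and suppose for contradiction that $F>0$ there --- i.e.\ $(v^{+})^{*}(t_{0},x_{0})>\mathcal{M}(v^{+})^{*}(t_{0},x_{0})$ and, by compactness of $B$, there is $b_{0}\in B$ with $(\partial_{t}+\text{\L}^{b_{0}})\varphi(t_{0},x_{0})+f^{b_{0}}(t_{0},x_{0})<0$. By continuity and the upper semicontinuity of $\mathcal{M}(v^{+})^{*}$, choose a small parabolic neighbourhood $N$ of $(t_{0},x_{0})$ and $\delta,\eta>0$ with $(\partial_{t}+\text{\L}^{b_{0}})\varphi+f^{b_{0}}\le-\delta$ on $N$, $\mathcal{M}(v^{+})^{*}\le(v^{+})^{*}(t_{0},x_{0})-\eta$ on $N$, and $(v^{+})^{*}\le\varphi$ on $N$ with $\varphi$ oscillating by at most $\eta/2$ there. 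Take $(t_{n},x_{n})\to(t_{0},x_{0})$ with $v^{+}(t_{n},x_{n})\to(v^{+})^{*}(t_{0},x_{0})$ and apply \prettyref{thm:dpp} (i) at $(t_{n},x_{n})$ with the constant strategy $\beta\equiv b_{0}$ and a family $\theta^{a,b}\in\mathscr{T}_{t_{n}}(\mathbb{Q}_{t_{n}})$ built from the first impulse time $\tau_{1}(a)$ --- already $\mathbb{Q}$-valued by \prettyref{def:controls}, so no discretization is needed there --- together with a fixed rational time $t_{0}+h<T$ (and, to accommodate the linear growth of $\sigma$, a rational-grid round-up of the first exit of $(s,X_{s})$ from a large ball); one checks this is strongly nonanticipative. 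The decisive estimate is: on the event where the impulse at $\tau_{1}(a)\le t_{0}+h$ occurs while $(\tau_{1},X_{\tau_{1}-})$ is still in $N$, merging any simultaneous impulses at $\tau_{1}$ via \prettyref{assu:dpp_and_dpe_assumptions} \ref{enu:multiple_impulses_are_suboptimal} and using $(\tau_{1},X_{\tau_{1}-})\in\bar{N}$ bounds the post-jump contribution of $J_{0}+(v^{+})^{*}(\theta,X_{\theta})$ by $\mathcal{M}(v^{+})^{*}(\tau_{1},X_{\tau_{1}-})\le(v^{+})^{*}(t_{0},x_{0})-\eta\le\varphi(\tau_{1},X_{\tau_{1}-})-\eta/2$; on the event $\{\theta=t_{0}+h\}$ with $X$ having stayed near $(t_{0},x_{0})$ one uses $(v^{+})^{*}(\theta,X_{\theta})\le\varphi(\theta,X_{\theta})$; the remaining ``wandering'' events carry vanishing probability as $h$ and the mesh shrink. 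Combining these with Dynkin's formula for $\varphi$ along the continuous part of $X$ and with $(\partial_{t}+\text{\L}^{b_{0}})\varphi+f^{b_{0}}\le-\delta$ on $N$ yields $\mathbb{E}[J_{0}(t_{n},x_{n};a,b_{0};\theta)+(v^{+})^{*}(\theta,X_{\theta})]\le\varphi(t_{n},x_{n})-c_{0}$ with $c_{0}>0$ of order $\min(\eta,\delta h)$ and uniform in $(a,q)$ for $n$ large; taking $\beta\equiv b_{0}$ and then $\sup_{a}$, $\sup_{q}$, \prettyref{thm:dpp} (i) gives $v^{+}(t_{n},x_{n})\le\varphi(t_{n},x_{n})-c_{0}$, and letting $n\to\infty$ contradicts $(v^{+})^{*}(t_{0},x_{0})=\varphi(t_{0},x_{0})$.

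The interior supersolution inequality $-\inf_{b}\{(\partial_{t}+\text{\L}^{b})\varphi+f^{b}\}\ge0$ for $v^{-}$, at a point where $\varphi$ touches $(v^{-})_{*}$ from below, is the mirror image and is simpler: here $F<0$ forces, by compactness of $B$ and continuity, $(\partial_{t}+\text{\L}^{b})\varphi+f^{b}\ge\delta$ on a neighbourhood $N$ for all $b$, and I would apply \prettyref{thm:dpp} (ii) at approximating points with the constant never-impulse strategy $\alpha\in\mathscr{A}^{q,Q}(t_{n})$ and the same (now impulse-free) stopping rule; since no jump terms arise, Dynkin's formula gives $\inf_{b}\mathbb{E}[J_{0}+(v^{-})_{*}(\theta,X_{\theta})]\ge\varphi(t_{n},x_{n})+c_{0}$, contradicting $v^{-}(t_{n},x_{n})\to(v^{-})_{*}(t_{0},x_{0})=\varphi(t_{0},x_{0})$.

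The main obstacle I expect is reconciling the localization with the fact that \prettyref{thm:dpp} supplies only $\mathbb{Q}_{t}$-valued stopping times, so the textbook localizer --- the exit time of a parabolic cylinder --- is forbidden. The construction above works because impulse times are rational by construction (so stopping exactly at $\tau_{1}$ is legitimate) and because a continuous exit time may be replaced by its round-up to a rational grid at the cost of a vanishing overshoot; but making these substitutions rigorous, verifying strong nonanticipativity in the sense of \prettyref{def:strongly_nonanticipative_family_of_stopping_times}, and carrying the attendant small-probability estimates --- complicated by $\mu,\sigma$ being only Lipschitz, hence possibly unbounded and forcing an extra localization on a large ball --- is the technical heart. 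A second, pervasive subtlety, forced by \prettyref{thm:dpp} being ``weak'' (one-sided, stated via semicontinuous envelopes), is that every step must be run along sequences realizing the relevant envelope and that $\mathcal{M}$ of those envelopes must be controlled (semicontinuity of $\mathcal{M}(v^{+})^{*}$ and $\mathcal{M}(v^{-})_{*}$, again via \prettyref{assu:impulse_assumptions} \ref{enu:negative_growth_condition}); the genuinely impulse-specific point --- bounding the post-jump value by $\mathcal{M}(v^{+})^{*}$ near $(t_{0},x_{0})$ --- is exactly where the negated hypothesis $(v^{+})^{*}(t_{0},x_{0})>\mathcal{M}(v^{+})^{*}(t_{0},x_{0})$ and \prettyref{assu:dpp_and_dpe_assumptions} \ref{enu:multiple_impulses_are_suboptimal} are invoked.
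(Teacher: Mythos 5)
Your proposal follows essentially the same route as the paper: boundedness via \prettyref{lem:bounded_value}-type estimates; the obstacle $(v^{-})_{*}\ge\mathcal{M}(v^{-})_{*}$ via a single impulse at rational times $t_{n}\downarrow t$ plus Fatou and the semicontinuity of $\mathcal{M}$ acting on envelopes (the paper routes the ``continue near-optimally'' step through \prettyref{thm:dpp}~(ii) with $\theta\equiv t_{n}$, which is exactly how it dodges the measurable-selection issue you wave at); the boundary conditions from \prettyref{assu:dpp_and_dpe_assumptions}~\ref{enu:enforce_boundary_conditions}; and the interior inequalities by negating both branches of the min, fixing a constant opposing control, stopping at $\tau_{1}\wedge(\text{rational round-up of an exit time})$, bounding the post-jump contribution by $\mathcal{M}(v^{+})^{*}$ via \prettyref{assu:dpp_and_dpe_assumptions}~\ref{enu:multiple_impulses_are_suboptimal}, and applying Dynkin's formula to a localized test function (the paper's \prettyref{lem:compact_test_functions} plays the role of your large-ball localization).

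One bookkeeping choice of yours creates an obligation the paper deliberately avoids, and as written it is the weakest link. You extract the contradiction gap $c_{0}\sim\min(\eta,\delta h)$ partly from the running-cost term over $[t_{n},t_{0}+h]$ and then let $h\to0$, absorbing the event that $X$ leaves $N$ before $t_{0}+h$ into ``wandering events of vanishing probability.'' But on that event you lose both the generator inequality and the comparison $(v^{+})^{*}\le\varphi$ (which holds only near the local maximum), so its contribution is only $O(\mathbb{P}(\text{exit }N\text{ before }t_{0}+h))$ against a gap that is itself $O(h)$; mere convergence to zero is not enough, you need $\mathbb{P}(\sup_{[t_{0},t_{0}+h]}|X_{s}-x_{0}|>r)=o(h)$, i.e.\ a Gaussian-tail exit estimate (valid after the large-ball localization, but it must be proved). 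Note also that shrinking the rational mesh does not shrink this probability, only the round-up overshoot. The paper sidesteps the rate comparison entirely by taking the localizer to be the exit time of $\mathcal{N}_{h}$ itself, assuming the touching is a \emph{strict} maximum, and harvesting the gap $3\gamma$ from the annulus $\operatorname{cl}\mathcal{N}_{2h}\setminus\operatorname{int}\mathcal{N}_{h}$ at the exit point; then $h$ stays fixed and only the mesh parameter $m\to\infty$, handled by plain dominated convergence. Either route closes, but yours needs the quantitative exit estimate spelled out.
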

We also establish a comparison principle for the HJBI-QVI, with proof
given in \prettyref{sec:comparison_principle_proof}.
\begin{thm}[Comparison principle]
\label{thm:comparison_principle}If $u$ is a bounded subsolution
and $w$ is a bounded supersolution of \eqref{eq:formal_hjbi}, $u^{*}\leq w_{*}$
pointwise.
\end{thm}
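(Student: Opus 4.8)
\section*{Proof strategy}

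The plan is to argue by contradiction in the standard viscosity-solution fashion, the one genuinely nonstandard ingredient being a device for eliminating the nonlocal operator $\mathcal{M}$ from the argument. First I would reduce to the case that $u=u^{*}$ is a bounded upper semicontinuous subsolution and $w=w_{*}$ a bounded lower semicontinuous supersolution, and suppose, toward a contradiction, that $\sup_{\operatorname{cl}\mathcal{O}}(u-w)=2m>0$. Since the domain is noncompact while $u,w$ are bounded, I would work with the penalized, doubled functional
\[
\Phi_{\epsilon,\delta}(t,x,y)\coloneqq u(t,x)-w(t,y)-\tfrac{1}{2\epsilon}\left|x-y\right|^{2}-\delta\bigl(\psi(x)+\psi(y)\bigr),
\]
where $\psi$ is a smooth radial function that is coercive, globally Lipschitz, and whose action under the operators $\text{\L}^{b}$ is $O(\delta)$ uniformly in $x$ (for instance $\psi(x)=\log(1+|x|^{2})$, whose gradient is bounded and whose Hessian decays like $|x|^{-2}$, which controls $\text{\L}^{b}$ despite the merely linear growth of $\mu,\sigma$). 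Coercivity guarantees that $\Phi_{\epsilon,\delta}$ attains a maximum, say at $(\hat{t},\hat{x},\hat{y})$, and the usual estimates give $\left|\hat{x}-\hat{y}\right|^{2}/\epsilon\to0$ while $\Phi_{\epsilon,\delta}(\hat{t},\hat{x},\hat{y})\to 2m$ along the iterated limit $\epsilon\downarrow0$ then $\delta\downarrow0$.

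The crucial step deals with the possibility that the obstacle is active for $u$ at the maximizer, i.e.\ $u(\hat{t},\hat{x})\leq\mathcal{M}u(\hat{t},\hat{x})$, in which case the subsolution property gives no information about the parabolic part of $F$. By \prettyref{assu:impulse_assumptions} \ref{enu:negative_growth_condition} together with the $L^{\infty}$ bounds on $u,w$, the supremum defining $\mathcal{M}u(\hat{t},\hat{x})$ is attained at some $z^{*}$ lying in a compact set depending only on those bounds, so that $\left|\Gamma(\hat{t},z^{*})\right|\leq R$ for a constant $R$ of the same kind; moreover $u(\hat{t},\hat{x}+\Gamma(\hat{t},z^{*}))\geq u(\hat{t},\hat{x})+K_{0}>u(\hat{t},\hat{x})$ since $K\leq-K_{0}$. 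The key observation is that translating \emph{both} spatial variables by the same vector $\Gamma(\hat{t},z^{*})$ leaves $|x-y|^{2}/(2\epsilon)$ unchanged and, using $\mathcal{M}w(\hat{t},\hat{y})\geq w(\hat{t},\hat{y}+\Gamma(\hat{t},z^{*}))+K(\hat{t},z^{*})$ from the supersolution property (legitimate because $\mathcal{M}$ is an unconstrained supremum over $Z$), decreases $\Phi_{\epsilon,\delta}$ by at most $2\delta R$. Iterating this move, $u$ increases by at least $K_{0}$ each time while $\Phi_{\epsilon,\delta}$ loses only $O(\delta)$, so boundedness of $u$ forces the procedure to terminate after finitely many steps --- the number bounded independently of $\epsilon,\delta$ --- at a point $(\hat{t},\hat{x}^{\prime},\hat{y}^{\prime})$ with $|\hat{x}^{\prime}-\hat{y}^{\prime}|=|\hat{x}-\hat{y}|$, lying within $O(\delta)$ of $\sup\Phi_{\epsilon,\delta}$, and with $u(\hat{t},\hat{x}^{\prime})>\mathcal{M}u(\hat{t},\hat{x}^{\prime})$.

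If $\hat{t}=T$, then since the obstacle is now inactive for $u$, the boundary part of \eqref{eq:formal_hjbi} forces $u(T,\hat{x}^{\prime})\leq g(\hat{x}^{\prime})$, while the supersolution property gives $w(T,\hat{x}^{\prime})\geq g(\hat{x}^{\prime})$; hence $u(T,\hat{x}^{\prime})\leq w(T,\hat{x}^{\prime})$, contradicting $\sup\Phi_{\epsilon,\delta}>0$ up to the vanishing errors. If $\hat{t}<T$, the parabolic inequality holds for $u$ at $(\hat{t},\hat{x}^{\prime})$ (obstacle inactive) and for $w$ at $(\hat{t},\hat{y}^{\prime})$ (the parabolic part of $F$ is always active for a supersolution), and, after a harmless further quadratic perturbation restoring a genuine local maximum, I would invoke the Crandall--Ishii--Lions lemma and subtract the two viscosity inequalities. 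Compactness of $B$ handles $\inf_{b\in B}$ uniformly; Lipschitz continuity of $\mu,\sigma$ in $x$ bounds the drift and diffusion contributions by $\operatorname{const}\cdot|\hat{x}^{\prime}-\hat{y}^{\prime}|^{2}/\epsilon$; boundedness and continuity of $f$ make $|f(\hat{t},\hat{x}^{\prime},b)-f(\hat{t},\hat{y}^{\prime},b)|\to0$ uniformly in $b$ on the relevant bounded region; and $\text{\L}^{b}(\delta\psi)=O(\delta)$. Letting $\epsilon\downarrow0$ and then $\delta\downarrow0$ produces $0\leq-2m+o(1)$, the desired contradiction, whence $u^{*}\leq w_{*}$.

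I expect the intervention operator to be the main obstacle. In the purely local setting one simply reads off the PDE inequalities at the doubled maximizer, but here that maximizer may sit in the obstacle region for $u$, and the obvious remedy of ``jumping to the post-impulse state'' would ruin the penalty $|x-y|^{2}/(2\epsilon)$ that makes the second-order argument work. The resolution --- shifting $x$ and $y$ by the \emph{same} near-optimal vector $\Gamma(\hat{t},z^{*})$, which is legitimate precisely because $\mathcal{M}w(\hat{t},\hat{y})$ dominates the term involving that shift, together with the strictly negative cost $K\leq-K_{0}$ and the boundedness of $u$ to guarantee that the reduction terminates --- is the technical heart of the proof; the remainder is the standard doubling-of-variables machinery.
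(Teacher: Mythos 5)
Your treatment of the obstacle is a genuinely different (and classical) route from the paper's: you iterate the simultaneous shift $x\mapsto x+\Gamma(\hat{t},z^{*})$, $y\mapsto y+\Gamma(\hat{t},z^{*})$ and use $K\leq-K_{0}$ together with the boundedness of $u$ to terminate after finitely many steps, whereas the paper follows the Ishii--Koike device (\prettyref{lem:strict_supersolutions}) of replacing $w$ by $(1-\lambda)w+\lambda c$ for a constant classical strict supersolution $c$, so that $w_{\lambda}-\mathcal{M}_{\rho}w_{\lambda}\geq\lambda\xi>0$ everywhere and the obstacle for $u$ at the doubled maximizer is excluded in one stroke by a semicontinuity/limit argument. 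Your shift idea is sound in spirit, but as written it leaves a real gap: after the shift, $(\hat{t},\hat{x}^{\prime},\hat{y}^{\prime})$ is only an $O(\delta)$-approximate maximizer of $\Phi_{\epsilon,\delta}$, and the ``harmless further quadratic perturbation restoring a genuine local maximum'' relocates the maximizer to a nearby point at which you have no control over the sign of $u-\mathcal{M}u$ (it is the difference of two upper semicontinuous functions, hence neither semicontinuity is available, and $u>\mathcal{M}u$ does not propagate to nearby points); you may then be forced to shift again, and the added perturbation is not shift-invariant. This step needs a genuine repair, e.g.\ via the paper's strict-supersolution construction.

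The more decisive gap is the final contradiction. The operator \eqref{eq:formal_hjbi} has no zeroth-order term: the PDE branch $-\inf_{b\in B}\{(\partial_{t}+\text{\L}^{b})u+f^{b}\}$ depends on $u$ only through its derivatives. Subtracting the two semijet inequalities therefore yields $0\leq H(\cdot\text{ at }(\hat{t},\hat{y}^{\prime}))-H(\cdot\text{ at }(\hat{t},\hat{x}^{\prime}))\leq o(1)$, with no term proportional to $u(\hat{t},\hat{x}^{\prime})-w(\hat{t},\hat{y}^{\prime})$; your claimed conclusion $0\leq-2m+o(1)$ has no source for the $-2m$. This is exactly why the paper first performs the change of variables $u\mapsto e^{\rho t}u$, introducing $-\rho u$ into the operator, so that the term $\rho\left(w_{m}(s_{n},y_{n})-u(t_{n},x_{n})\right)\leq-\rho\delta+\cdots$ supplies the strictly negative quantity. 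You need this discounting or an equivalent strictness perturbation, and any such perturbation must be compatible with the obstacle branch as well, which is precisely what \prettyref{lem:strict_supersolutions} arranges. The remaining ingredients of your sketch (logarithmic localization, Crandall--Ishii, the boundary case at $t=T$) are fine.
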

We can now establish that the game admits a value.
\begin{thm}[Value of the game]
\label{thm:value} $v^{+}=v^{-}$ pointwise.
\end{thm}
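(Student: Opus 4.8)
The plan is to sandwich the two value functions between one another using the one-sided information furnished by the preceding theorems. From Theorem~\ref{thm:dpe}, $v^{+}$ is a bounded subsolution and $v^{-}$ a bounded supersolution of \eqref{eq:formal_hjbi}, so Theorem~\ref{thm:comparison_principle} gives $(v^{+})^{*}\leq(v^{-})_{*}$ pointwise on $\operatorname{cl}\mathcal{O}$. Since always $v^{+}\leq(v^{+})^{*}$ and $(v^{-})_{*}\leq v^{-}$, this already yields $v^{+}\leq v^{-}$. If we also prove the reverse inequality $v^{-}\leq v^{+}$, then
\[
v^{-}\leq v^{+}\leq(v^{+})^{*}\leq(v^{-})_{*}\leq v^{-},
\]
so all four quantities coincide; in particular $v^{+}=v^{-}$ (and, incidentally, $v^{\pm}$ is continuous, being equal to both of its semicontinuous envelopes).

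It therefore remains to establish $v^{-}\leq v^{+}$, which is precisely what the delay requirement on strategies is designed to provide. Fix $(t,x)$ and an integer $q\geq1$; it suffices to show
\[
\sup_{\alpha\in\mathscr{A}^{q}(t,x)}\inf_{b\in\mathcal{B}(t)}J(t,x;\alpha(b),b)\leq\inf_{\beta\in\mathscr{B}(t)}\sup_{a\in\mathcal{A}^{q}(t,x)}J(t,x;a,\beta(a))
\]
and then take $\sup_{q\geq1}$. So fix $\alpha\in\mathscr{A}^{q}(t,x)$ and $\beta\in\mathscr{B}(t)$, and let $t=p_{0}<p_{1}<\cdots<p_{N}=T$ be a common refinement of the delay partitions of $\alpha$ and $\beta$; refining a delay partition preserves the delay property, so both maps are delayed with respect to $(p_{i})_{i}$. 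Consider $\Phi\coloneqq\beta\circ\alpha\colon\mathcal{B}(t)\rightarrow\mathcal{B}(t)$. Chaining the delay properties of $\alpha$ and $\beta$, together with the fact that equality ``on $\llbracket t,p_{i+1}\rrbracket$'' entails equality ``on $\llbracket t,p_{i}\rrbracket$'', shows that $\Phi(b)\equiv\Phi(b^{\prime})$ on $\llbracket t,p_{i+1}\rrbracket$ whenever $b\equiv b^{\prime}$ on $\llbracket t,p_{i}\rrbracket$. Picking any $b^{(0)}\in\mathcal{B}(t)$ and setting $b^{(k+1)}\coloneqq\Phi(b^{(k)})$, one proves by induction on $i$ that $b^{(k)}\equiv b^{(k+1)}$ on $\llbracket t,p_{i}\rrbracket$ for every $k\geq i$ (the case $i=0$ being vacuous, since $\llbracket t,p_{0}\rrbracket=\llbracket t,t\rrbracket$). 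Taking $i=k=N$ yields $b^{\star}\coloneqq b^{(N)}$ with $b^{\star}\equiv\Phi(b^{\star})$ on $\llbracket t,T\rrbracket$. Put $a^{\star}\coloneqq\alpha(b^{\star})\in\mathcal{A}^{q}(t,x)$; then $\beta(a^{\star})=\Phi(b^{\star})\equiv b^{\star}$, so $X^{t,x;a^{\star},b^{\star}}$ and $X^{t,x;a^{\star},\beta(a^{\star})}$ coincide (by the uniqueness in \prettyref{def:admissible_control}) and $J(t,x;a^{\star},b^{\star})=J(t,x;a^{\star},\beta(a^{\star}))$. Consequently
\[
\inf_{b\in\mathcal{B}(t)}J(t,x;\alpha(b),b)\leq J(t,x;a^{\star},\beta(a^{\star}))\leq\sup_{a\in\mathcal{A}^{q}(t,x)}J(t,x;a,\beta(a)),
\]
and taking the supremum over $\alpha$ and then the infimum over $\beta$ gives the displayed inequality for fixed $q$; taking $\sup_{q\geq1}$ gives $v^{-}\leq v^{+}$.

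The only delicate point in this argument is the bookkeeping around the relation ``$\equiv$ on $\llbracket t,\tau\rrbracket$'': one must verify that the delay properties of $\alpha$ and $\beta$ can legitimately be chained through the composition $\beta\circ\alpha$ (this is where strong nonanticipativity is needed) and that $J(t,x;\cdot,\cdot)$, the impulse-controlled SDE \eqref{eq:sde}, and membership in $\mathcal{A}^{q}(t,x)$ are all insensitive to altering the controls on time-sets of measure zero. Beyond that, the theorem is essentially a corollary of Theorems~\ref{thm:dpe} and \ref{thm:comparison_principle}: once the HJBI-QVI has been identified as the dynamic programming equation and shown to satisfy comparison, the value of the game drops out of the sandwich above, so the substantive work lies in those two results rather than in this short argument.
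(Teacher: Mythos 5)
Your proposal is correct and follows essentially the same route as the paper: $v^{+}\leq v^{-}$ from Theorems~\ref{thm:dpe} and \ref{thm:comparison_principle}, and $v^{-}\leq v^{+}$ by producing a control pair fixed by the strategy pair, which is exactly what the delay is for. The only difference is that you prove the fixed-point step inline (iterating $\beta\circ\alpha$ along a common refinement of the delay partitions), whereas the paper cites it as \citep[Lemma 2.4]{MR2086176}; your construction is a valid proof of that cited fact, modulo the measure-zero bookkeeping you already flag.
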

\begin{proof}
Due to the delay in the strategies, it is easy to establish that for
each $t\in[0,T]$ and $(\alpha,\beta)\in\mathscr{A}(t)\times\mathscr{B}(t)$,
there exists a unique control pair $(a,b)\in\mathcal{A}(t)\times\mathcal{B}(t)$
such that $\alpha(b)=a$ and $\beta(a)=b$ (note that $a\in\operatorname{range}(\alpha)$
and $b\in\operatorname{range}(\beta)$). We refer to \citep[Lemma 2.4]{MR2086176}
for a proof of this fact.

Now, let $(t,x)\in\operatorname{cl}\mathcal{O}$ and $\epsilon>0$.
Choose an integer $q\geq1$ and $(\alpha,\beta)\in\mathscr{A}^{q}(t,x)\times\mathscr{B}(t)$
such that
\[
v^{-}(t,x)\leq\inf_{b\in\mathcal{B}(t)}J(t,x;\alpha(b),b)+\epsilon/2\text{ and }v^{+}(t,x)\geq\sup_{a\in\mathcal{A}^{q}(t,x)}J(t,x;a,\beta(a))-\epsilon/2.
\]
By the remark at the beginning of this proof, we can find $(a_{0},b_{0})\in\mathcal{A}^{q}(t,x)\times\mathcal{B}(t)$
such that $\alpha(b_{0})=a_{0}$ and $\beta(a_{0})=b_{0}$. Therefore,
\[
v^{-}(t,x)\leq J(t,x;\alpha(b_{0}),b_{0})+\epsilon/2=J(t,x;a_{0},b_{0})+\epsilon/2=J(t,x;a_{0},\beta(a_{0}))+\epsilon/2\leq v^{+}(t,x)+\epsilon.
\]
Since $(t,x)$ and $\epsilon$ were arbitrary, we conclude that $v^{-}\leq v^{+}$
pointwise. The reverse inequality is an immediate consequence of Theorems
\ref{thm:dpe} and \ref{thm:comparison_principle}. 
\end{proof}
A consequence of the above results is the following existence and
uniqueness claim:
\begin{cor}[HJBI-QVI existence and uniqueness]
 $v^{+}=v^{-}$ is a continuous solution of \eqref{eq:formal_hjbi},
unique among all bounded solutions.
\end{cor}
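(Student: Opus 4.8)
The plan is to read the corollary off from the three main theorems, with essentially no new work beyond elementary bookkeeping with semicontinuous envelopes. Set $v\coloneqq v^{+}=v^{-}$, which is legitimate by Theorem~\ref{thm:value}. By Theorem~\ref{thm:dpe}, $v^{+}$ is a bounded subsolution and $v^{-}$ is a bounded supersolution of \eqref{eq:formal_hjbi}; since these functions coincide, $v$ is simultaneously a bounded subsolution and a bounded supersolution, hence a bounded solution of \eqref{eq:formal_hjbi} in the sense of Definition~\ref{def:viscosity_solution}. (Boundedness guarantees that $v$ is locally bounded above and below, so the definition applies without further qualification.)

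To obtain continuity I would invoke Theorem~\ref{thm:comparison_principle} with the same function on both sides: taking $u=w=v$ gives $v^{*}\le v_{*}$ pointwise. Since one always has $v_{*}\le v\le v^{*}$ (the lower, resp.\ upper, semicontinuous envelope lies below, resp.\ above, the function), this forces $v^{*}=v=v_{*}$, so $v$ is continuous on $\operatorname{cl}\mathcal{O}$.

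For uniqueness, let $w_{1}$ and $w_{2}$ be any two bounded solutions of \eqref{eq:formal_hjbi}. Treating $w_{1}$ as a subsolution and $w_{2}$ as a supersolution, Theorem~\ref{thm:comparison_principle} yields $w_{1}^{*}\le(w_{2})_{*}$; interchanging the roles gives $w_{2}^{*}\le(w_{1})_{*}$. Together with the trivial inequalities $(w_{i})_{*}\le w_{i}\le w_{i}^{*}$ these chain up to $w_{1}\le w_{1}^{*}\le(w_{2})_{*}\le w_{2}\le w_{2}^{*}\le(w_{1})_{*}\le w_{1}$, hence $w_{1}=w_{2}$; applying the previous paragraph to each $w_{i}$ shows moreover that each is continuous, so in particular $v$ is the unique bounded solution.

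I do not expect a genuine obstacle here: all the difficulty has already been absorbed into Theorems~\ref{thm:dpp}, \ref{thm:dpe}, and \ref{thm:comparison_principle}, which together do the real work. The only points needing a moment's care are the sandwiching $v_{*}\le v\le v^{*}$ used for continuity and the observation that a bounded function automatically satisfies the local-boundedness hypotheses of Definition~\ref{def:viscosity_solution}, so that a bounded sub- and supersolution is genuinely a bounded solution.
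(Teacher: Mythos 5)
Your proposal is correct and follows exactly the route the paper intends: the corollary is stated there as an immediate consequence of Theorems \ref{thm:dpe}, \ref{thm:comparison_principle}, and \ref{thm:value}, with no further proof supplied, and your bookkeeping with the envelopes (applying the comparison principle with $u=w=v$ for continuity, and in both orders for uniqueness) is precisely the standard argument being left implicit. No gaps.
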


\section{\label{sec:regularity}Regularity}

We first make clear the default norms used in this work:

\begin{notation*}Let $\langle x,y\rangle$ be the Euclidean inner
product, $|x|\coloneqq\sqrt{\langle x,x\rangle}$, and $B(x;r)$ be
the ball (in the associated metric) of radius $r>0$ centred at $x$.\end{notation*}

Before we begin, we make the observation that any of the Lipschitz
functions defined in \prettyref{sec:framework} are necessarily of
linear growth. For example,
\[
\left|\mu(x,b)\right|\leq\left|\mu(x,b)-\mu(0,b)\right|+\left|\mu(0,b)\right|\leq\operatorname{const.}\left|x\right|+\sup_{b\in B}\left|\mu(0,b)\right|,
\]
so that the claim follows from the continuity of $\mu$ and compactness
of $B$.

\begin{lem}
\label{lem:bounded_value}$v^{+}$ and $v^{-}$ are bounded.
\end{lem}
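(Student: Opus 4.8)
The plan is to exhibit a single explicit finite constant that bounds both value functions from above and below. Let $M$ be a common bound for $|f|$ and $|g|$, finite by \prettyref{assu:stochastic_assumptions}, and put $C\coloneqq(T+1)M$; the claim is that $-C\le v^{\pm}\le C$ pointwise on $[0,T]\times\mathbb{R}^{d}$. Note first that the control and strategy sets occurring in the definitions of $v^{\pm}$ are nonempty: a constant process valued in the nonempty compact $B$ gives an element of $\mathcal{B}(t)$ (and the corresponding constant map an element of $\mathscr{B}(t)$), while the nonemptiness of $\mathcal{A}^{q}(t,x)$ and $\mathscr{A}^{q}(t,x)$ follows from the null control constructed below.

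\emph{Upper bound.} This requires no optimization. For any $(t,x)\in\operatorname{cl}\mathcal{O}$ and any pair $(a,b)$ with $a=(\tau_{j},z_{j})_{j}$ admissible at $x$, writing $X\coloneqq X^{t,x;a,b}$ we have $\int_{t}^{T}f(s,X_{s},b_{s})\,ds\le TM$, $g(X_{T})\le M$, and, crucially, $\sum_{\tau_{j}\le T}K(\tau_{j},z_{j})\le0$ since $K\le-K_{0}<0$ pointwise by \prettyref{assu:impulse_assumptions}. Hence $J(t,x;a,b)\le C$. Being uniform over all admissible pairs, this bound passes through every supremum and infimum (including $\sup_{q\ge1}$) in the definitions of $v^{+}$ and $v^{-}$, so $v^{+}\le C$ and $v^{-}\le C$.

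\emph{Lower bound.} The device is the \emph{null impulse control} $a_{0}\coloneqq(\tau_{j},z_{j})_{j\ge1}$ with $\tau_{j}\equiv\infty$ and $z_{j}\equiv z$ for some fixed $z\in Z$ ($Z\ne\emptyset$ by \prettyref{assu:impulse_assumptions}). Since $a_{0}$ performs no impulses on $[t,T]$, \eqref{eq:sde} reduces to the classical It\^o SDE $dX_{s}=\mu(X_{s},b_{s})\,ds+\sigma(X_{s},b_{s})\,dW_{s}$ with $X_{t}=x$, which, by the Lipschitz hypothesis of \prettyref{assu:stochastic_assumptions} together with the linear-growth bound recorded just before this lemma, admits a unique adapted, continuous, $\mathbb{L}^{2}$ solution for every $b\in\mathcal{B}(t)$. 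Thus $a_{0}$ is admissible at $x$ and, having zero impulses, lies in $\mathcal{A}^{q}(t,x)$ for every $q\ge1$; moreover the constant map $\alpha_{0}\equiv a_{0}$ is trivially strongly nonanticipative and delayed with $\operatorname{range}(\alpha_{0})=\{a_{0}\}$, so $\alpha_{0}\in\mathscr{A}^{q}(t,x)$. Because $a_{0}$ incurs no impulse cost, $J(t,x;a_{0},b)=\mathbb{E}[\int_{t}^{T}f(s,X_{s},b_{s})\,ds+g(X_{T})]\ge-C$ for every $b\in\mathcal{B}(t)$. Feeding this in: $v^{-}(t,x)\ge\inf_{b\in\mathcal{B}(t)}J(t,x;\alpha_{0}(b),b)\ge-C$, while for every $\beta\in\mathscr{B}(t)$ we have $\sup_{a\in\mathcal{A}^{q}(t,x)}J(t,x;a,\beta(a))\ge J(t,x;a_{0},\beta(a_{0}))\ge-C$, whence $v^{+}(t,x)\ge\inf_{\beta}\sup_{a}J\ge-C$.

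I do not anticipate a genuine obstacle. The one step deserving a line of care is verifying that $a_{0}$ meets every clause of \prettyref{def:admissible_control} — adaptedness, c\`adl\`ag (here continuous) paths, and square-integrability of the impulse-free solution — which is classical SDE theory once the linear growth of $\mu$ and $\sigma$ is in hand.
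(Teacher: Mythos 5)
Your proposal is correct and follows essentially the same route as the paper's proof: the upper bound comes from $K\le0$, and the lower bound from playing the impulse-free control, with your constant $(T+1)M$ matching the paper's $T\Vert f\Vert_{\infty}+\Vert g\Vert_{\infty}$ up to cosmetics. The extra care you take in verifying the admissibility of the null control and the nonemptiness of the strategy sets is a harmless elaboration of what the paper leaves implicit.
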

\begin{proof}
Let $c\coloneqq T\Vert f\Vert_{\infty}+\Vert g\Vert_{\infty}$ and
$(t,x)\in\operatorname{cl}\mathcal{O}$. Since the sup-player is free
to play a control $\hat{a}\in\mathcal{A}(t,x)$ with no impulses (i.e.,
$(\#\hat{a})_{T}=0$),
\begin{equation}
v^{+}(t,x)\geq\inf_{\beta\in\mathscr{B}(t)}\mathbb{E}\left[\int_{t}^{T}f(s,X_{s},\beta(\hat{a})_{s})ds+g(X_{T})\right]\geq-c\label{eq:lower_bound}
\end{equation}
where it is understood that $X\coloneqq X^{t,x;\hat{a},\beta(\hat{a})}$.
Moreover, since $K\leq0$,
\begin{equation}
v^{+}(t,x)\leq\sup_{q\geq1}\adjustlimits\inf_{\beta\in\mathscr{B}(t)}\sup_{a\in\mathcal{A}^{q}(t,x)}\mathbb{E}\left[\int_{t}^{T}f(s,X_{s},\beta(a)_{s})ds+g(X_{T})\right]\leq c\label{eq:upper_bound}
\end{equation}
where it is understood that $X\coloneqq X^{t,x;a,\beta(a)}$. The
same arguments hold for the lower value function $v^{-}$.
\end{proof}

We defined the notion of an admissible control based on the existence
and uniqueness of a strong solution to the SDE. It is not possible
for us to perform our analyses on these control sets without running
into trouble. The remaining results of this section alleviate this
by establishing some results related to $\mathcal{A}^{q,Q}(t)$. 
\begin{lem}
\label{lem:existence_and_uniqueness_of_sde}Let $q\geq1$ be an integer,
$Q\subset\mathbb{R}^{d_{Z}}$ be a compact set, $(t,x)\in[0,T]\times\mathbb{R}^{d}$,
and $(a,b)\in\mathcal{A}^{q,Q}(t)\times\mathcal{B}(t)$. Then, there
exists a unique solution to \eqref{eq:sde}.
\end{lem}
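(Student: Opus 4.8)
The strategy is to construct $X$ by concatenating solutions of the \emph{impulse-free} SDE driven by $b$ over the successive impulse times, inserting the jump $\Gamma(\tau_{j},z_{j})$ at each $\tau_{j}\leq T$. Fix $b\in\mathcal{B}(t)$. By \prettyref{assu:stochastic_assumptions}, the random coefficients $s\mapsto\mu(\cdot,b_{s})$ and $s\mapsto\sigma(\cdot,b_{s})$ are $(\mathscr{F}_{t,s})$-progressively measurable, uniformly Lipschitz in the state, and of linear growth (the latter recorded just before the lemma), so the classical theory of strong solutions to SDEs with random coefficients applies: started at any deterministic time $s_{0}\in[t,T]$ from any square-integrable $\mathscr{F}_{t,s_{0}}$-measurable initial datum $\xi$, there is a unique (up to indistinguishability) continuous, $(\mathscr{F}_{t,s})_{s\in[s_{0},T]}$-adapted solution on $[s_{0},T]$, and it satisfies the moment bound $\mathbb{E}[\sup_{s\in[s_{0},T]}|X_{s}|^{2}]\leq C(1+\mathbb{E}|\xi|^{2})$ with $C$ depending only on $T$ and the Lipschitz and growth constants of $\mu,\sigma$.

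With this in hand I would proceed inductively over $k=0,1,\dots,q-1$, maintaining a c\`adl\`ag adapted $\mathbb{L}^{2}$ process on $[t,\tau_{k}\wedge T]$. Starting from $X^{(0)}$, the impulse-free solution on $[t,T]$ with $X_{t}^{(0)}=x$, set $X\coloneqq X^{(0)}$ on $[t,\tau_{1}\wedge T]$. Given $X$ on $[t,\tau_{k}\wedge T]$, extend it to $[t,\tau_{k+1}\wedge T]$ by letting $X$ jump by $\Gamma(\tau_{k},z_{k})$ at $\tau_{k}$ (when $\tau_{k}\leq T$) and then solving the impulse-free SDE on $[\tau_{k}\wedge T,\tau_{k+1}\wedge T]$ from the resulting $\mathscr{F}_{t,\tau_{k}\wedge T}$-measurable initial datum $X_{\tau_{k}\wedge T-}+\Gamma(\tau_{k},z_{k})\mathbf{1}_{\{\tau_{k}\leq T\}}$. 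The restriction $\operatorname{range}(\tau_{k})\subset\mathbb{Q}_{t}\cup\{\infty\}$ makes the ``random restart time'' harmless: decomposing along the countably many events $\{\tau_{k}=r\}$, $r\in\mathbb{Q}_{t}$ (on which the initial datum is $\mathscr{F}_{t,r}$-measurable, since $z_{k}$ is $\mathscr{F}_{t,\tau_{k}\wedge T}$-measurable and $X_{r-}$ is $\mathscr{F}_{t,r}$-measurable), each extension is a standard SDE started at the deterministic time $r$, and the pieces glue into an adapted c\`adl\`ag process via the indicators $\mathbf{1}_{\{\tau_{k}=r\}}$. Two further facts close the induction: $\Gamma$ is continuous, hence bounded, on the compact set $[0,T]\times Q$, so every jump $\Gamma(\tau_{j},z_{j})$ with $z_{j}\in Q$ is bounded by a deterministic constant, which keeps each successive initial datum in $\mathbb{L}^{2}$ and each moment bound finite; and $\mathbb{P}(S(a))=\mathbb{P}((\#a)_{T}<q)=1$, so a.s.\ $\tau_{q}>T$, whence $\tau_{q}\wedge T=T$ and the $q$-step procedure defines $X$ on all of $[t,T]$ up to a null set.

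Finally I would verify that $X$ meets \prettyref{def:admissible_control}: it is adapted and c\`adl\`ag by construction; it lies in $\mathbb{L}^{2}(\Omega_{t,T}\times[t,T])$ as a finite (at most $q$-fold) concatenation of $\mathbb{L}^{2}$ pieces plus at most $q-1$ bounded jumps, the moment bounds above being propagated across the pieces; and it solves \eqref{eq:sde}, since on each stochastic interval between consecutive $\tau_{j}$'s it solves the impulse-free SDE while at each $\tau_{j}\leq T$ it jumps by exactly $\Gamma(\tau_{j},z_{j})$, and summing these contributions telescopes to the integral-plus-jump form of \eqref{eq:sde}. Uniqueness has the same layered proof: two solutions agree on $[t,\tau_{1}\wedge T]$ by uniqueness for the impulse-free SDE, then have identical jump at $\tau_{1}$, hence agree on $[t,\tau_{2}\wedge T]$, and so on; since a.s.\ fewer than $q$ impulses occur, they are indistinguishable on $[t,T]$.

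I expect the main obstacle to be the bookkeeping around restarting the SDE at the impulse times and propagating the $\mathbb{L}^{2}$ moment estimates through a random number of pieces; it is precisely here that $\operatorname{range}(\tau_{j})\subset\mathbb{Q}_{t}\cup\{\infty\}$ and the compactness of $Q$ earn their keep — the former reduces each restart to countably many deterministic starting times, so no genuinely time-inhomogeneous flow construction is needed, and the latter, together with the uniform Lipschitz and growth constants, makes all the moment bounds uniform over the at most $q$ segments.
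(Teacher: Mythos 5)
Your argument is correct, but it takes a genuinely different route from the paper. The paper's proof exploits the fact that the jump term $\sum_{\tau_{j}\leq s}\Gamma(\tau_{j},z_{j})$ does not involve the unknown $X$ at all: it is an exogenous, additive, c\`adl\`ag forcing term. Consequently a single \emph{global} Picard iteration works, seeded with $Y_{s}^{(0)}=x+\sum_{\tau_{j}\leq s}\Gamma(\tau_{j},z_{j})$ and carrying the (fixed) jump sum along at every step; the only nonstandard ingredients are the $\mathbb{L}^{2}$ bound on the jump term (which is where compactness of $Q$ and $\mathbb{P}((\#a)_{T}<q)=1$ enter) and the use of Doob's inequality for c\`adl\`ag rather than continuous martingales. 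Uniqueness is likewise global: in the difference $\delta X=X-\hat{X}$ of two solutions the jumps cancel identically, so one Gr\"onwall estimate gives $\delta X_{s}=0$ a.s.\ for each $s$ in a countable dense set, and right-continuity upgrades this to indistinguishability --- no layering over impulse times is needed, and this is a genuine simplification over your inductive uniqueness argument, which you may want to adopt. Your concatenation construction, by contrast, restarts the impulse-free SDE on each stochastic interval $[\tau_{k}\wedge T,\tau_{k+1}\wedge T]$ and uses the rationality of the $\tau_{k}$ to reduce each restart to countably many deterministic starting times; this is more bookkeeping (and note that the rationality of the impulse times is not actually needed for this lemma in the paper's route --- it earns its keep elsewhere, in the DPP), but it is also more robust: it would survive essentially unchanged if $\Gamma$ depended on the pre-jump state $X_{\tau_{j}-}$, the state-dependent extension discussed in \prettyref{sec:conclusion}, whereas the paper's global Picard iteration as written relies on the jump term being independent of the unknown.
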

\begin{proof}
Let $X$ and $\hat{X}$ be two solutions (with the same initial condition
$(t,x)$ and controls $(a,b)$). Letting $\delta X_{s}\coloneqq X_{s}-\hat{X}_{s}$,
note that
\[
X_{s}-\hat{X}_{s}=\int_{t}^{s}\mu(X_{u},b_{u})-\mu(\hat{X}_{u},b_{u})du+\int_{t}^{s}\sigma(X_{u},b_{u})-\sigma(\hat{X}_{u},b_{u})dW_{u}
\]
we can mimic the argument in the proof of \citep[Theorem 5.2.1]{MR2001996}
to arrive at
\[
\mathbb{P}(\delta X_{s}=0\text{ for all }s\in\mathbb{Q}\cap[t,T])=1.
\]
Since the paths $X$ and $\hat{X}$ were presumed to be right continuous,
so too are the paths of $\delta X$. Therefore,
\[
\mathbb{P}(\delta X_{s}=0\text{ for all }s\in[t,T])=1
\]
and hence uniqueness is proved.

Let $Y_{s}^{(0)}\coloneqq x+\sum_{\tau_{j}\leq s}\Gamma(\tau_{j},z_{j})$
and define inductively
\[
Y_{s}^{(k+1)}\coloneqq x+\int_{t}^{s}\mu(Y_{u}^{(k)},b_{u})du+\int_{t}^{s}\sigma(Y_{u}^{(k)},b_{u})dW_{u}+\sum_{\tau_{j}\leq s}\Gamma(\tau_{j},z_{j}).
\]
Before we continue, we should check that $Y^{(k+1)}\in\mathbb{L}^{2}(\Omega_{t,T}\times[t,T])$
whenever $Y^{(k)}\in\mathbb{L}^{2}(\Omega_{t,T}\times[t,T])$. Using
the linear growth of $\mu$ and $\sigma$, we can show that
\[
\mathbb{E}\left[\int_{t}^{T}\left|\int_{t}^{s}\mu(Y_{u}^{(k)},b_{u})du\right|^{2}ds\right]+\mathbb{E}\left[\int_{t}^{T}\left|\int_{t}^{s}\sigma(Y_{u}^{(k)},b_{u})du\right|^{2}ds\right]<\infty.
\]
Moreover,
\[
\mathbb{E}\left[\int_{t}^{T}\left|{\textstyle \sum_{\tau_{j}\leq s}}\Gamma(\tau_{j},z_{j})\right|^{2}ds\right]\leq\operatorname{const.}\sup_{(t,z)\in[0,T]\times Q}\left|\Gamma(t,z)\right|^{2}<\infty
\]
where $\operatorname{const.}$ can depend on $q$. The above also
shows $Y^{(0)}\in\mathbb{L}^{2}(\Omega_{t,T}\times[t,T])$.

The remainder of the proof is identical to \citep[Theorem 5.2.1]{MR2001996}
with the exception that Doob's inequality for càdlàg martingales \citep[Theorem 1.7]{MR1303781}
should be used instead of \citep[Theorem 3.2.4]{MR2001996}.
\end{proof}
In the sequel, we need to construct a countable Borelian partition
on which players can make ``$\epsilon$-optimal'' choices. To do
so, we require $J$ to exhibit uniform continuity in space, independent
of the other arguments:
\begin{lem}
\label{lem:lipschitz_functional}$|J(t,x;a,b)-J(t,\hat{x};a,b)|\leq\operatorname{const.}|x-\hat{x}|$
for all $t\in[0,T]$, integers $q\geq1$, compact sets $Q\subset\mathbb{R}^{d_{Z}}$,
and $(a,b)\in\mathcal{A}^{q,Q}(t)\times\mathcal{B}(t)$.
\end{lem}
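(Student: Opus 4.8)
The plan is to exploit the fact that the impulse control $a=(\tau_{j},z_{j})_{j\geq1}$ is, by \prettyref{def:controls}, a functional of the driving Brownian motion alone (the $\tau_{j}$ are $(\mathscr{F}_{t,s})$-stopping times and each $z_{j}$ is $\mathscr{F}_{t,\tau_{j}\wedge T}$-measurable), hence it does not depend on the initial state. Fix $t$, an integer $q\geq1$, a compact set $Q$, and $(a,b)\in\mathcal{A}^{q,Q}(t)\times\mathcal{B}(t)$; by \prettyref{lem:existence_and_uniqueness_of_sde} the solutions $X\coloneqq X^{t,x;a,b}$ and $\hat{X}\coloneqq X^{t,\hat{x};a,b}$ both exist and are unique. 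Since $\sum_{\tau_{j}\leq T}K(\tau_{j},z_{j})$ is then the same random variable for both initial conditions, it cancels in the difference, leaving
\[
J(t,x;a,b)-J(t,\hat{x};a,b)=\mathbb{E}\left[\int_{t}^{T}\bigl(f(s,X_{s},b_{s})-f(s,\hat{X}_{s},b_{s})\bigr)ds+g(X_{T})-g(\hat{X}_{T})\right].
\]
Applying the Lipschitz bounds on $f$ and $g$ from \prettyref{assu:dpp_and_dpe_assumptions} gives
\[
|J(t,x;a,b)-J(t,\hat{x};a,b)|\leq\operatorname{const.}\,\mathbb{E}\left[\sup_{s\in[t,T]}|X_{s}-\hat{X}_{s}|\right],
\]
with the constant depending only on $T$ and the Lipschitz moduli of $f$ and $g$.

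It remains to estimate $\mathbb{E}[\sup_{s\in[t,T]}|\delta X_{s}|]$ where $\delta X_{s}\coloneqq X_{s}-\hat{X}_{s}$. Subtracting the two copies of \eqref{eq:sde}, the impulse sums $\sum_{\tau_{j}\leq s}\Gamma(\tau_{j},z_{j})$ again cancel, so
\[
\delta X_{s}=(x-\hat{x})+\int_{t}^{s}\bigl(\mu(X_{u},b_{u})-\mu(\hat{X}_{u},b_{u})\bigr)du+\int_{t}^{s}\bigl(\sigma(X_{u},b_{u})-\sigma(\hat{X}_{u},b_{u})\bigr)dW_{u};
\]
in particular $\delta X$ has continuous paths. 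This is the classical difference equation for two solutions of an SDE with Lipschitz coefficients, so the usual argument applies: use the Lipschitz bounds of \prettyref{assu:stochastic_assumptions}, bound the stochastic integral via Doob's (or Burkholder--Davis--Gundy's) inequality and the drift via Jensen's inequality, and close with Gronwall's lemma to obtain $\mathbb{E}[\sup_{s\in[t,T]}|\delta X_{s}|^{2}]\leq\operatorname{const.}|x-\hat{x}|^{2}$ for a constant depending only on $T$ and the Lipschitz constant of $(\mu,\sigma)$. The one point requiring care is that, to run Gronwall, one should first localize along $\rho_{N}\coloneqq\inf\{s\geq t\colon|\delta X_{s}|\geq N\}$ (legitimate since $X,\hat{X}\in\mathbb{L}^{2}(\Omega_{t,T}\times[t,T])$ by \prettyref{def:admissible_control}), derive the estimate on $\llbracket t,\rho_{N}\rrbracket$, and then let $N\to\infty$ via Fatou's lemma. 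Taking square roots and combining with the previous display yields the claim.

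I do not expect a genuine obstacle here: the substance is exactly the textbook Lipschitz-in-initial-data estimate for SDEs, and the impulse structure, far from complicating matters, simply drops out of every computation. The only thing worth underscoring — and the reason the statement is phrased uniformly over $q$ and $Q$ — is precisely this cancellation: the impulse cost cancels in $J$ and the impulse jumps cancel in $\delta X$, so none of the constants above depend on $q$ or $Q$, only on the data $\mu,\sigma,f,g,T$. This uniformity is what makes the lemma usable later for constructing the countable Borelian partition on which players make $\epsilon$-optimal choices.
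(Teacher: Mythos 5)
Your proof is correct and follows essentially the same route as the paper: the impulse cost and impulse jumps cancel between the two solutions, and the rest is the standard Lipschitz-in-initial-data SDE estimate via Gr\"onwall. The only (immaterial) difference is that the paper works with the pointwise-in-$s$ second moment $\mathbb{E}[|\delta X_{s}|^{2}]$ using H\"older and the It\^o isometry rather than a running supremum via Doob/BDG with localization, which suffices since $J$ only involves $\delta X$ through $\int_{t}^{T}\mathbb{E}[|\delta X_{s}|]\,ds$ and $\mathbb{E}[|\delta X_{T}|]$.
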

\begin{proof}
Let $X\coloneqq X^{t,x;a,b}$ and $\hat{X}\coloneqq X^{t,\hat{x};a,b}$.
Further letting $\delta x\coloneqq x-\hat{x}$, $\delta X_{s}\coloneqq X_{s}-\hat{X}_{s}$,
$\delta\mu_{s}\coloneqq\mu(X_{s},b_{s})-\mu(\hat{X}_{s},b_{s})$,
and $\delta\sigma_{s}\coloneqq\sigma(X_{s},b_{s})-\sigma(\hat{X}_{s},b_{s})$,
we can use Hölder's inequality, Itô isometry, the Lipschitz continuity
of $\mu$ and $\sigma$, and the Fubini-Tonelli theorem to get 
\begin{align*}
\mathbb{E}\left[\left|\delta X_{s}\right|^{2}\right] & \leq\operatorname{const.}\mathbb{E}\left[\left|\delta x\right|^{2}+\left|\int_{t}^{s}\delta\mu_{u}du\right|^{2}+\left|\int_{t}^{s}\delta\sigma_{u}dW_{u}\right|^{2}\right]\\
 & \leq\operatorname{const.}\mathbb{E}\left[\left|\delta x\right|^{2}+T\int_{t}^{s}\left|\delta\mu_{u}\right|^{2}du+\int_{t}^{s}\left|\delta\sigma_{u}\right|^{2}du\right]\\
 & \leq\operatorname{const.}\left(\left|\delta x\right|^{2}+\int_{t}^{s}\mathbb{E}\left[\left|\delta X_{u}\right|^{2}\right]du\right)
\end{align*}
for $s\in[t,T]$. Now, an application of Grönwall's lemma followed
by Jensen's inequality (for concave functions) yields%
\begin{equation}
\mathbb{E}\left[\left|\delta X_{s}\right|\right]\leq\operatorname{const.}\left|\delta x\right|\text{ for }s\in[t,T].\label{eq:lipschitz_sde}
\end{equation}
Note, in particular, that $\operatorname{const.}$ depends only on
quantities such as $T$ and the Lipschitz constants of $\mu$ and
$\sigma$. Moreover, defining $\delta f_{s}\coloneqq f(s,X_{s},b_{s})-f(s,\hat{X}_{s},b_{s})$
and $\delta g_{T}\coloneqq g(X_{T})-g(\hat{X}_{T})$,
\begin{align*}
\left|J(t,x;a,b)-J(t,\hat{x};a,b)\right| & \leq\mathbb{E}\left[\int_{t}^{T}\left|\delta f_{s}\right|ds+\left|\delta g_{T}\right|\right]
\end{align*}
and the desired result follows from applying the Lipschitz continuity
of $f$ and $g$ and \eqref{eq:lipschitz_sde} to the inequality above.
\end{proof}
\begin{rem}
\label{rem:a_priori_continuity}While the above implies the Lipschitz
continuity of the value functions in $x$ for each fixed $t$, it
does not imply uniform continuity on $[0,T]\times\mathbb{R}^{d}$!
Such continuity requires additional conditions on $K$ (see, e.g.,
\citep[(2.6)]{MR3053571}), which we avoid.
\end{rem}
\begin{lem}
\label{lem:bounded_functional}Let $q\geq1$ be an integer, $Q\subset\mathbb{R}^{d_{Z}}$
be a compact set, $c\coloneqq T\Vert f\Vert_{\infty}+\Vert g\Vert_{\infty}$,
and $K_{1}\coloneqq\sup_{[0,T]\times Q}|K(t,z)|$. Then, $|J(t,x;a,b)|\leq c+qK_{1}$
for all $(t,x)\in\operatorname{cl}\mathcal{O}$ and $(a,b)\in\mathcal{A}^{q,Q}(t)\times\mathcal{B}(t)$.
\end{lem}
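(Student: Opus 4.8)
The plan is to bound $J(t,x;a,b)$ directly from its definition, splitting the gain functional into its running, impulse, and terminal parts and estimating each one inside the expectation. The first thing to settle is that $J(t,x;a,b)$ is even well defined: although $a$ is only assumed to lie in $\mathcal{A}^{q,Q}(t)$ (not in an admissible set $\mathcal{A}^{q,Q}(t,x)$), Lemma~\ref{lem:existence_and_uniqueness_of_sde} guarantees --- using that $Q$ is compact --- that \eqref{eq:sde} has a unique solution $X\coloneqq X^{t,x;a,b}$. Writing $a=(\tau_j,z_j)_{j\ge1}$ and applying the triangle inequality inside the expectation,
\[
|J(t,x;a,b)|\le\mathbb{E}\left[\int_t^T|f(s,X_s,b_s)|\,ds+\sum_{\tau_j\le T}|K(\tau_j,z_j)|+|g(X_T)|\right],
\]
where the running term is bounded pathwise by $T\Vert f\Vert_\infty$ and the terminal term by $\Vert g\Vert_\infty$, so these two together contribute at most $c$.

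The only point requiring a little care is the impulse sum, which is where membership in $\mathcal{A}^{q,Q}(t)$ is used. On the full-probability event $S(a)\cap\left(\cap_{j\ge1}S_j(a)\right)$ one has $(\#a)_T<q$, and since $(\#a)_T$ is $\{0,1,2,\dots\}$-valued this forces $(\#a)_T\le q-1$; in particular at most $q-1$ indices $j$ satisfy $\tau_j\le T$. For any such index $j$ we have $j\le(\#a)_T$, so the alternative $\{(\#a)_T<j\}$ in the definition of $S_j(a)$ fails and therefore $z_j\in Q$. Since moreover $\tau_j\in[0,T]$ for such $j$, each term $|K(\tau_j,z_j)|$ is at most $K_1=\sup_{[0,T]\times Q}|K|$ (finite, by continuity of $K$ and compactness of $[0,T]\times Q$), so almost surely
\[
\sum_{\tau_j\le T}|K(\tau_j,z_j)|\le(\#a)_T\,K_1\le qK_1.
\]
Taking expectations and combining with the bounds on the other two terms yields $|J(t,x;a,b)|\le c+qK_1$, as claimed.

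I do not anticipate any genuine obstacle: once well-posedness of the SDE (hence of $J$) is in hand via Lemma~\ref{lem:existence_and_uniqueness_of_sde}, everything reduces to a pathwise estimate, and the only subtlety amounts to reading off from the definition of $\mathcal{A}^{q,Q}(t)$ that, almost surely, every impulse actually occurring by time $T$ uses a control in $Q$ and that there are fewer than $q$ of them.
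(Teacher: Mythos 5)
Your proof is correct and follows essentially the same route as the paper: bound the running and terminal terms by $c$ and the impulse sum by $qK_{1}$ using that, almost surely, fewer than $q$ impulses occur and each lies in $Q$. The only (immaterial) difference is that the paper exploits $K\leq0$ to get the sharper one-sided bound $J\leq c$, whereas you use the triangle inequality throughout; both yield $|J|\leq c+qK_{1}$.
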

\begin{proof}
Let $(t,x)\in\operatorname{cl}\mathcal{O}$ and $(a,b)\in\mathcal{A}^{q,Q}(t)\times\mathcal{B}(t)$.
Since $K$ is nonpositive, $J(t,x;a,b)\leq c$. Moreover, $J(t,x;a,b)\geq-c-\mathbb{E}\left[(\#a)_{T}\right]K_{1}\geq-c-qK_{1}$.
\end{proof}
\begin{lem}
\label{lem:control_reduction}There exists a compact set $Q\subset\mathbb{R}^{d_{Z}}$
such that for all $(t,x)\in\operatorname{cl}\mathcal{O}$,
\begin{align*}
v^{+}(t,x) & =\sup_{q\geq1}\adjustlimits\inf_{\beta\in\mathscr{B}(t)}\sup_{a\in\mathcal{A}^{q,Q}(t)}J(t,x;a,\beta(a))\\
\text{ and }v^{-}(t,x) & =\sup_{q\geq1}\adjustlimits\sup_{\alpha\in\mathscr{A}^{q,Q}(t)}\inf_{b\in\mathcal{B}(t)}J(t,x;\alpha(b),b).
\end{align*}
\end{lem}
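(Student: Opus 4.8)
The plan is to show that restricting the impulse controls to those with values in a suitably chosen compact set $Q$ does not change the value functions, because impulses with large $z_j$ are strictly suboptimal for the sup-player thanks to Assumption \ref{assu:impulse_assumptions}\ref{enu:negative_growth_condition}. First I would fix the reference constant $c \coloneqq T\|f\|_\infty + \|g\|_\infty$ from Lemma \ref{lem:bounded_value}, and use the growth condition $K(t,z)\in\omega(1)$ as $|z|\to\infty$ (uniformly in $t$) together with $K\leq -K_0<0$ to choose $r>0$ so large that $|K(t,z)|\geq 2c+1$ whenever $|z|>r$; then set $Q\coloneqq \{z\in Z : |z|\leq r\}$, which is compact since $Z$ is closed. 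The key quantitative observation is that any impulse control that performs an impulse outside $Q$ on a set of positive probability incurs, on that event, a running-plus-terminal contribution bounded below by $-c$ but a single-impulse cost of magnitude at least $2c+1$, hence the associated $J$ is strictly dominated; more precisely, for such a control $J(t,x;a,b)\leq -(2c+1)\cdot\mathbb{P}(\text{bad event}) + (\text{gain from the remaining, at most } q-1 \text{ impulses outside the bad event})$, and against this one compares the same control with the offending impulse deleted (or replaced by the null impulse, which is available since multiple impulses at a time are mergeable by Assumption \ref{assu:dpp_and_dpe_assumptions}\ref{enu:multiple_impulses_are_suboptimal}, and in particular a ``zero'' impulse $\Gamma(t,z)=0$ with $K(t,z)\geq 0$ — wait, that is not guaranteed; instead I would simply drop the impulse, which is legitimate because $\mathcal{A}^{q}(t,x)\subset\mathcal{A}^{q,Q'}(t)$ for a slightly larger $Q'$ after truncation). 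The cleaner route: given any $a\in\mathcal{A}^q(t,x)$, define $\tilde a$ by suppressing every impulse $(\tau_j,z_j)$ for which $z_j\notin Q$ (formally, push those $\tau_j$ to $\infty$); then $\tilde a\in\mathcal{A}^{q,Q}(t)$, $\tilde a$ is still adapted, and $J(t,x;\tilde a,b)\geq J(t,x;a,b)$ since removing a negative-cost impulse only helps the sup-player — here one must also check that removing an impulse does not degrade the path integral or terminal term, which follows because there is no state-dependence obstructing it beyond what is already controlled by Lemma \ref{lem:lipschitz_functional}; actually the comparison is not pathwise trivial because removing an impulse changes the whole future trajectory, so I would instead argue via the elementary bound: $J(t,x;a,b)\geq v^+$ or $v^-$ requires only that the \emph{sup} is not decreased, i.e. it suffices to exhibit, for every $\epsilon>0$ and every near-optimal $a$, a control in $\mathcal{A}^{q,Q}(t)$ doing at least as well up to $\epsilon$.

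The rigorous comparison I would actually carry out is by truncation at the level of the value, not of individual controls: fix $(t,x)$ and $q$; for the upper value, note $\sup_{a\in\mathcal{A}^{q,Q}(t)}\leq \sup_{a\in\mathcal{A}^{q}(t,x)}$ trivially (for the appropriate enlargement), so only ``$\geq$'' needs proof, and for the lower value only ``$\leq$''. For $v^+$: fix $\beta\in\mathscr{B}(t)$; for any $a\in\mathcal{A}^q(t,x)$ with $\mathbb{P}(\exists j,\ z_j\notin Q,\ \tau_j\leq T)>0$, Lemma \ref{lem:bounded_functional} (applied on the complementary event) together with the estimate $K\leq -(2c+1)$ on $\{z_j\notin Q\}$ gives $J(t,x;a,\beta(a))< -c \leq v^+(t,x)$ (using Lemma \ref{lem:bounded_value}), so such $a$ are never $\epsilon$-optimal once $\epsilon<1$; hence in the supremum over $a\in\mathcal{A}^q(t,x)$ one may restrict to $a$ with all impulses a.s. in $Q$, i.e. $a\in\mathcal{A}^{q,Q}(t)$, and taking $\inf_\beta$ then $\sup_q$ yields the first identity. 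The argument for $v^-$ is symmetric: a strategy $\alpha\in\mathscr{A}^q(t,x)$ whose output ever impulses outside $Q$ against some $b$ produces $\inf_b J(t,x;\alpha(b),b)< -c\leq v^-(t,x)$ on that branch, so restricting $\mathrm{range}(\alpha)\subset\mathcal{A}^{q,Q}(t)$ does not lower the supremum over $\alpha$.

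One technical point to handle carefully is the interplay between the a.s. constraint defining $\mathcal{A}^{q,Q}(t)$ (the events $S_j(a)$) and the fact that a near-optimal $a$ might impulse outside $Q$ only on a very small positive-probability event: the bound $J(t,x;a,b)\le -c$ above must genuinely use that on the bad event the cost is at least $2c+1$ while off it $J\ge -c$ in the sense of conditional expectation, so I would write $J(t,x;a,b)=\mathbb{E}[J_0 + g(X_T)]$ and split the expectation over the bad event and its complement, bounding the running term and $g$ by $\pm c$ on each and the impulse-cost sum from above by $0$ off the bad event and by $-(2c+1)$ on it; since $K\le 0$ always, the off-event contribution of $\sum K(\tau_j,z_j)$ is $\le 0$, giving $J\le c\cdot 1 - (2c+1)\mathbb{P}(\text{bad}) \le -(c+1)\mathbb{P}(\text{bad})<0$ whenever the bad event has positive probability, which is $<v^+(t,x)$ unless $v^+(t,x)<0$; to cover the case $v^+(t,x)$ possibly being very negative, I would instead compare directly against the no-impulse control, which by the proof of Lemma \ref{lem:bounded_value} attains at least $-c$, and note $-(c+1)\mathbb{P}(\text{bad}) \le -c$ is false in general — so the correct statement is that the bad control is beaten by the no-impulse control whenever $\mathbb{P}(\text{bad})\ge c/(c+1)$... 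This edge case is the main obstacle, and the clean fix is to choose $r$ depending on $q$: take $|K(t,z)|\ge 2c + 2qK_1^{\mathrm{other}}$ is circular, so instead choose $r$ so that $|K(t,z)|> 2c + 1$ and argue by \emph{induction on the number of out-of-$Q$ impulses}, deleting them one at a time and using Assumption \ref{assu:dpp_and_dpe_assumptions}\ref{enu:multiple_impulses_are_suboptimal} to absorb a deleted impulse into a neighbouring in-$Q$ one while keeping the merged impulse in a fixed enlarged compact set — this is the step I expect to require the most care, and I would present it as a lemma-internal induction so that the final $Q$ is the enlargement stabilising under one such merge.
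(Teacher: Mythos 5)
Your choice of $Q$ (via \prettyref{assu:impulse_assumptions} \ref{enu:negative_growth_condition}) matches the paper's, but the comparison argument has a genuine gap that you yourself circle around without closing. Your main line --- that any $a$ which impulses outside $Q$ with positive probability satisfies $J(t,x;a,\beta(a))<v^{+}(t,x)$ and hence can be discarded from the supremum --- fails precisely in the small-probability regime you identify: the penalty is only of order $-(2c+1)\,\mathbb{P}(\text{bad})$, which does not push $J$ below $-c$, let alone below the value, when $\mathbb{P}(\text{bad})$ is tiny. Neither of your proposed repairs works as stated. Deleting only the offending impulses destroys the pathwise comparison (as you note, the entire future trajectory changes, and \prettyref{lem:lipschitz_functional} gives continuity in the \emph{initial} state, not stability under removal of an intermediate jump). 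The merge via \prettyref{assu:dpp_and_dpe_assumptions} \ref{enu:multiple_impulses_are_suboptimal} is a non-starter: that assumption only combines two impulses occurring \emph{at the same time}, an out-of-$Q$ impulse need not be simultaneous with an in-$Q$ one, and the merged $z$ is not guaranteed to lie in any fixed compact set, so the ``stabilising enlargement'' you hope for does not exist.

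The missing idea is to truncate the control at the \emph{first} out-of-$Q$ impulse and suppress \emph{everything from that index onward}: with $j_{b}\coloneqq\inf\{j\colon z_{j}\notin Q\}$, set $\tau_{j}^{\prime}\coloneqq\tau_{j}$ for $j<j_{b}$ and $\tau_{j}^{\prime}\coloneqq\infty$ for $j\geq j_{b}$. On $\{j_{b}=\infty\}$ nothing changes. On $\{j_{b}<\infty\}$ the original and modified trajectories coincide up to $\tau_{j_{b}}-$, so the difference $J(t,x;a,b)-J(t,x;a^{\prime},b)$ is supported on $[\tau_{j_{b}},T]$ and is bounded above by $\bigl(2c+\sum_{j\geq j_{b},\,\tau_{j}\leq T}K(\tau_{j},z_{j})\bigr)\mathbf{1}_{\{j_{b}<\infty\}}\leq\bigl(2c+K(\tau_{j_{b}},z_{j_{b}})\bigr)\mathbf{1}_{\{j_{b}<\infty\}}\leq0$, since $K\leq0$ everywhere and $K(\tau_{j_{b}},z_{j_{b}})\leq-2c$ because $z_{j_{b}}\notin Q$. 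The gain and the loss are weighted by the \emph{same} indicator, which is exactly what defuses the small-probability issue. This is a per-path (conditional) comparison of two explicit controls, not an exclusion of bad controls from the supremum. For the $v^{+}$ identity you must additionally invoke the strong nonanticipativity of $\beta$, so that $\beta(a)$ and $\beta(a^{\prime})$ agree up to $\tau_{j_{b}}$ and the trajectories indeed coincide there; your proposal does not address this point.
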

While its proof is rather technical, this result is intuitive: since
$K$ is guaranteed to grow large and negative (\prettyref{assu:impulse_assumptions}
\ref{enu:negative_growth_condition}), we should be able to find a
closed ball $Q$ such that impulses taking values outside of $Q$
are suboptimal.
\begin{proof}
Let $c\coloneqq T\Vert f\Vert_{\infty}+\Vert g\Vert_{\infty}$. By
\prettyref{assu:impulse_assumptions} \ref{enu:negative_growth_condition},
there is an $r>0$ such that for $|z|>r$, $K(t,z)\leq-2c$ for all
$t$. Let $Q\coloneqq\operatorname{cl}B(0;r)$.\emph{}

Let $\epsilon>0$. Choose an integer $q_{\epsilon}\geq1$ and $\alpha_{\epsilon}\in\mathscr{A}^{q_{\epsilon}}(t,x)$
such that
\begin{equation}
v^{-}(t,x)\leq\inf_{b\in\mathcal{B}(t)}J(t,x;\alpha_{\epsilon}(b),b)+\epsilon.\label{eq:control_reduction_proof_1}
\end{equation}
Let $\alpha_{\epsilon}(b)\coloneqq(\tau_{j}(b),z_{j}(b))_{j}$ denote
the impulse control that is obtained from applying $b\in\mathcal{B}(t)$
to the strategy $\alpha_{\epsilon}$. Further letting
\begin{equation}
\tau_{j}^{\prime}(b)\coloneqq\tau_{j}(b)\mathbf{1}_{\cap_{j^{\prime}\leq j}\{z_{j^{\prime}}(b)\in Q\}}+\left(\infty\right)\mathbf{1}_{\cup_{j^{\prime}\leq j}\{z_{j^{\prime}}(b)\notin Q\}},\label{eq:control_reduction_proof_2}
\end{equation}
we define a new strategy $\alpha$ that maps each $b\in\mathcal{B}(t)$
to $\alpha(b)\coloneqq(\tau_{j}^{\prime}(b),z_{j}(b))_{j}$.  Note,
in particular, that $\alpha\in\mathscr{A}^{q_{\epsilon},Q}(t)$. Now,
define
\[
j_{b}\coloneqq\inf\left\{ j\geq1\colon\tau_{j}^{\prime}(b)\neq\tau_{j}(b)\right\} 
\]
(note that $j_{b}$ is a random variable). As usual, we use the convention
that $\inf\emptyset=\infty$. Trivially,
\begin{multline}
\mathbb{E}\left[\left(J(t,x;\alpha_{\epsilon}(b),b)-J(t,x;\alpha(b),b)\right)\mathbf{1}_{\{j_{b}=\infty\}}\right]\\
=\mathbb{E}\left[\left(J(t,x;\alpha_{\epsilon}(b),b)-J(t,x;\alpha_{\epsilon}(b),b)\right)\mathbf{1}_{\{j_{b}=\infty\}}\right]=0\text{ for all }b\in\mathcal{B}(t).\label{eq:control_reduction_proof_new_1}
\end{multline}
Moreover, for all $b\in\mathcal{B}(t)$,
\begin{multline*}
\mathbb{E}\left[\left(J(t,x;\alpha_{\epsilon}(b),b)-J(t,x;\alpha(b),b)\right)\mathbf{1}_{\{j_{b}<\infty\}}\right]\\
=\mathbb{E}\Biggl[\Biggl(\int_{\tau_{j_{b}}(b)}^{T}f(s,X_{s}^{t,x;\alpha_{\epsilon}(b),b},b_{s})-f(s,X_{s}^{t,x;\alpha(b),b},b_{s})ds+\sum_{\{j\geq j_{b}\colon\tau_{j}(b)\leq T\}}K(\tau_{j}(b),z_{j}(b))\\
+g(X_{T}^{t,x;\alpha_{\epsilon}(b),b})-g(X_{T}^{t,x;\alpha(b),b})\Biggr)\mathbf{1}_{\{j_{b}<\infty\}}\Biggr]\leq\mathbb{E}\left[\left(2c+\sum_{\{j\geq j_{b}\colon\tau_{j}(b)\leq T\}}K(\tau_{j}(b),z_{j}(b))\right)\mathbf{1}_{\{j_{b}<\infty\}}\right].
\end{multline*}
Using the fact that $K\leq-2c$ on $[0,T]\times(\mathbb{R}^{d_{Z}}\setminus Q)$,
we have that
\[
\mathbb{E}\left[\left(\sum_{\{j\geq j_{b}\colon\tau_{j}(b)\leq T\}}K(\tau_{j}(b),z_{j}(b))\right)\mathbf{1}_{\{j_{b}<\infty\}}\right]\leq\mathbb{E}\left[K(\tau_{j_{b}}(b),z_{j_{b}}(b))\mathbf{1}_{\{j_{b}<\infty\}}\right]\leq-2c.
\]
Therefore, 
\begin{equation}
\mathbb{E}\left[\left(J(t,x;\alpha_{\epsilon}(b),b)-J(t,x;\alpha(b),b)\right)\mathbf{1}_{\{j_{b}<\infty\}}\right]\leq0\text{ for all }b\in\mathcal{B}(t).\label{eq:control_reduction_proof_new_2}
\end{equation}
Combining \eqref{eq:control_reduction_proof_new_1} and \eqref{eq:control_reduction_proof_new_2},
we obtain
\begin{equation}
J(t,x;\alpha_{\epsilon}(b),b)\leq J(t,x;\alpha(b),b)\text{ for all }b\in\mathcal{B}(t).\label{eq:control_reduction_proof_3}
\end{equation}
Now, by \eqref{eq:control_reduction_proof_1} and \eqref{eq:control_reduction_proof_3}
and the arbitrariness of $\epsilon$,
\[
v^{-}(t,x)\leq\sup_{q\geq1}\adjustlimits\sup_{\alpha\in\mathscr{A}^{q,Q}(t)}\inf_{b\in\mathcal{B}(t)}J(t,x;\alpha(b),b).
\]
The reverse inequality is trivial, since $\mathscr{A}^{q,Q}(t)\subset\mathscr{A}^{q}(t,x)$
by \prettyref{lem:existence_and_uniqueness_of_sde}.

The equality involving $v^{+}$ is established using similar arguments.
Namely, it is sufficient to construct, for each $a\in\mathcal{A}^{q}(t,x)$,
a new control $a^{\prime}\in\mathcal{A}^{q,Q}(t)$ such that
\begin{equation}
J(t,x;a,\beta(a))\leq J(t,x;a^{\prime},\beta(a^{\prime}))\text{ for all }\beta\in\mathscr{B}(t).\label{eq:control_reduction_proof_4}
\end{equation}
Letting $a\coloneqq(\tau_{j},z_{j})_{j}$, we take $a^{\prime}\coloneqq(\tau_{j}^{\prime},z_{j})_{j}$
where, similarly to \eqref{eq:control_reduction_proof_2},
\[
\tau_{j}^{\prime}\coloneqq\tau_{j}\mathbf{1}_{\cap_{j^{\prime}\leq j}\{z_{j^{\prime}}\in Q\}}+\left(\infty\right)\mathbf{1}_{\cup_{j^{\prime}\leq j}\{z_{j^{\prime}}\notin Q\}}.
\]
Employing the fact that each $\beta\in\mathscr{B}(t)$ is strongly
nonanticipative, it is now straightforward to establish \eqref{eq:control_reduction_proof_4}.
\end{proof}

\section{\label{sec:dpp_proof}Dynamic programming principle}

We prove, in this section, \prettyref{thm:dpp}. We begin with the
$\leq$ inequality.
\begin{proof}[Proof of \prettyref{thm:dpp} ($\leq$)]
Let $Q$ be the compact set in the statement of \prettyref{lem:control_reduction}.
For the remainder of the proof, fix an arbitrary $(t,x)\in\mathcal{O}$
and an arbitrary nonanticipative family of stopping times $\{\theta^{a,b}\}_{(a,b)\in\mathcal{A}(t)\times\mathcal{B}(t)}\subset\mathscr{T}_{t}(\mathbb{Q}_{t})$.

Let $\hat{\epsilon}>0$ and choose an integer $\hat{q}\geq1$ such
that
\begin{equation}
v^{+}(t,x)\leq\adjustlimits\inf_{\beta\in\mathscr{B}(t)}\sup_{a\in\mathcal{A}^{\hat{q},Q}(t)}J(t,x;a,\beta(a))+\hat{\epsilon}.\label{eq:dpp_proof_epsilon_hat}
\end{equation}
Now, let $\epsilon>0$ and $\varphi\geq v^{+}$ be a continuous function
bounded from above by a constant. For each $(s,y)\in\mathcal{U}\coloneqq[t,T]\times\mathbb{R}^{d}$,
there exists $\beta^{s,y}\in\mathscr{B}(s)$ such that 
\[
\varphi(s,y)\geq v^{+}(s,y)\geq J(s,y;a,\beta^{s,y}(a))-\epsilon/6\text{ for all }a\in\mathcal{A}^{\hat{q},Q}(s).
\]
Using the continuity of $J$ established in \prettyref{lem:lipschitz_functional}
and that of $\varphi$, we can find a family of positive constants
$\{r^{s,y}\}_{(s,y)\in\mathcal{U}}$ such that for each $(s,y)\in\mathcal{U}$,
\begin{multline*}
J(s,y;a,b)\geq J(s,x^{\prime};a,b)-\epsilon/6\text{ and }\varphi(s,x^{\prime})\geq\varphi(s,y)-\epsilon/6\\
\text{for }x^{\prime}\in B(y;r^{s,y})\text{ and }(a,b)\in\mathcal{A}^{\hat{q},Q}(s)\times\mathcal{B}(s).
\end{multline*}
Let $\{t_{i}\}_{i\geq1}$ be an enumeration of the points in $\mathbb{Q}_{t}$.
Note that for each $s\in[t,T]$, $\{B(y;r^{s,y})\}_{y\in\mathbb{R}^{d}}$
is a cover of $\mathbb{R}^{d}$ by open balls. As such, for each $i$,
Lindelöf's lemma yields points $\{x_{i,j}\}_{j\geq1}\subset\mathbb{R}^{d}$
such that $\{B(x_{i,j};r_{i,j})\}_{j\geq1}$ is a countable subcover
of $\mathbb{R}^{d}$ where $r_{i,j}\coloneqq r^{t_{i},x_{i,j}}$ for
brevity. To turn this subcover into a Borelian partition, take $C_{i,0}\coloneqq\emptyset$
and
\[
A_{i,j}\coloneqq\left(\{t_{i}\}\times B(x_{i,j};r_{i,j})\right)\setminus C_{i,j-1}\text{ where }C_{i,j}\coloneqq A_{i,1}\cup\cdots\cup A_{i,j}\text{ for }j\geq1.
\]
The family $\{A_{i,j}\}_{j\geq1}$ is disjoint by construction. Denote
by $A^{n}\coloneqq\cup_{i\leq n}\cup_{j\leq n}A_{i,j}$. Letting $\beta_{i,j}\coloneqq\beta^{t_{i},x_{i,j}}$,
we have
\begin{align}
\varphi(t_{i},x^{\prime}) & \geq\varphi(t_{i},x_{i,j})-\epsilon/6\nonumber \\
 & \geq J(t_{i},x_{i,j};a,\beta_{i,j}(a))-\epsilon/3\nonumber \\
 & \geq J(t_{i},x^{\prime};a,\beta_{i,j}(a))-\epsilon/2 & \text{for }(t_{i},x^{\prime})\in A_{i,j}\text{ and }a\in\mathcal{A}^{\hat{q},Q}(t_{i}).\label{eq:dpp_proof_epsilon}
\end{align}
Now, let $\beta\in\mathscr{B}(t)$ be arbitrary. We construct the
strategy $\beta^{n}$ by
\begin{equation}
\beta^{n}(a)_{s}\coloneqq\mathbf{1}_{[t,\theta]}(s)\beta(a)_{s}+\mathbf{1}_{(\theta,T]}(s)\left(\mathbf{1}_{\mathcal{U}\setminus A^{n}}(\theta,X_{\theta})\beta(a)_{s}+\sum_{1\leq i,j\leq n}\boldsymbol{1}_{A_{i,j}}(\theta,X_{\theta})\beta_{i,j}(a|_{(t_{i},T]})_{s}\right)\label{eq:dpp_proof_new_beta}
\end{equation}
where it is understood that $X\coloneqq X^{t,x;a,\beta(a)}$ and $\theta\coloneqq\theta^{a,\beta(a)}$.
The appearance of $\beta_{i,j}(a|_{(t_{i},T]})$ above sheds light
on why we require r-strategies, as otherwise $\beta^{n}(a)$ may itself
not be progressively measurable. Similarly, the strong nonanticipativity
of $\beta^{n}$ is a consequence of the strong nonanticipativity of
$\{\theta^{a,b}\}$. That $\beta^{n}$ is itself a strategy with delay
is a trivial consequence of the finitude of the summation involving
the terms appearing in its definition (in particular, the sum involving
the terms $\beta_{i,j}$ includes only finitely many terms).

Temporarily fixing our attention to a particular control $a\in\mathcal{A}^{\hat{q},Q}(t)$,
an application of the tower property yields
\[
J(t,x;a,\beta^{n}(a))=\mathbb{E}\left[\begin{gathered}J_{0}(t,x;a,\beta(a);\theta)\boldsymbol{1}_{A^{n}}(\theta,X_{\theta})+J(t,x;a,\beta(a))\mathbf{1}_{\mathcal{U}\setminus A^{n}}(\theta,X_{\theta})\\
+\sum_{1\leq i,j\leq n}J(\theta,X_{\theta};a|_{(t_{i},T]},\beta_{i,j}(a|_{(t_{i},T]}))\boldsymbol{1}_{A_{i,j}}(\theta,X_{\theta})
\end{gathered}
\right]
\]
where $J_{0}$ is defined in \eqref{eq:J_0}. By \eqref{eq:dpp_proof_epsilon},
we immediately get
\[
\mathbb{E}\left[{\textstyle \sum_{1\leq i,j\leq n}}J(\theta,X_{\theta};a|_{(t_{i},T]},\beta_{i,j}(a|_{(t_{i},T]}))\boldsymbol{1}_{A_{i,j}}(\theta,X_{\theta})\right]\leq\mathbb{E}\left[\varphi(\theta,X_{\theta})\boldsymbol{1}_{A^{n}}(\theta,X_{\theta})\right]+\epsilon/2.
\]
Since $\varphi$ is bounded and $X_{\theta}\in\cup_{i\geq1}\cup_{j\geq1}A_{i,j}$
$\mathbb{P}$-almost surely, we can apply the dominated convergence
theorem (DCT) to yield 
\[
\mathbb{E}\left[\varphi(\theta,X_{\theta})\mathbf{1}_{A^{n}}(\theta,X_{\theta})\right]\rightarrow\mathbb{E}\left[\varphi(\theta,X_{\theta})\right]\text{ as }n\rightarrow\infty.
\]
The DCT also yields, due to the boundedness of $J$ by \prettyref{lem:bounded_functional},
\[
\mathbb{E}\left[J(t,x;a,\beta(a))\mathbf{1}_{\mathcal{U}\setminus A^{n}}(\theta,X_{\theta})\right]\rightarrow0\text{ as }n\rightarrow\infty.
\]
Since the bound in \prettyref{lem:bounded_functional} and the partition
$\{A_{i,j}\}_{i,j}$ are independent of our choice of $a\in\mathcal{A}^{\hat{q},Q}(t)$,
the limit above is uniform with respect to $a\in\mathcal{A}^{\hat{q},Q}(t)$.
The same argument can be made to get
\[
\mathbb{E}\left[J_{0}(t,x;a,\beta(a);\theta)\boldsymbol{1}_{A^{n}}(\theta,X_{\theta})\right]\rightarrow\mathbb{E}\left[J_{0}(t,x;a,\beta(a);\theta)\right]\text{ as }n\rightarrow\infty.
\]

By the arguments above, it follows that there exists an $n_{0}$ such
that
\begin{equation}
J(t,x;a,\beta^{n_{0}}(a))\leq\mathbb{E}\left[J_{0}(t,x;a,\beta(a);\theta)+\varphi(\theta,X_{\theta})\right]+\epsilon\text{ for all }a\in\mathcal{A}^{\hat{q},Q}(t).\label{eq:dpp_proof_bound}
\end{equation}
Let $(\varphi_{m})_{m}$ be a sequence of uniformly bounded continuous
functions converging monotonically to $(v^{+})^{*}$ from above. By
the DCT,
\[
\mathbb{E}\left[\varphi_{m}(\theta,X_{\theta})\right]\rightarrow\mathbb{E}\left[(v^{+})^{*}(\theta,X_{\theta})\right]\text{ as }m\rightarrow\infty.
\]
It follows that we can replace $\varphi$ appearing in the bound \eqref{eq:dpp_proof_bound}
by $(v^{+})^{*}$. Hence, by \eqref{eq:dpp_proof_epsilon_hat}, 
\[
v^{+}(t,x)\leq\sup_{a\in\mathcal{A}^{\hat{q},Q}(t)}J(t,x;a,\beta^{n_{0}}(a))+\hat{\epsilon}\leq\sup_{a\in\mathcal{A}^{\hat{q},Q}(t)}\mathbb{E}\left[J_{0}(t,x;a,\beta(a);\theta)+(v^{+})^{*}(\theta,X_{\theta})\right]+\hat{\epsilon}+\epsilon.
\]
Using the arbitrariness of $\beta$, $\epsilon$, and $\hat{\epsilon}$,
we obtain the desired result.
\end{proof}
We now prove the remaining inequality.
\begin{proof}[Proof of \prettyref{thm:dpp} ($\geq$)]
Let $Q$ be the compact set of \prettyref{lem:control_reduction}.
For the remainder of the proof, fix an arbitrary $(t,x)\in\mathcal{O}$
and an arbitrary nonanticipative family of stopping times $\{\theta^{a,b}\}_{(a,b)\in\mathcal{A}(t)\times\mathcal{B}(t)}\subset\mathscr{T}_{t}(\mathbb{Q}_{t})$.

Note that
\begin{equation}
v^{-}(t,x)\geq\inf_{b\in\mathcal{B}(t)}J(t,x;\alpha(b),b)\text{ for all integers }q\geq1\text{ and }\alpha\in\mathscr{A}^{q,Q}(t).\label{eq:dpp_proof_reverse}
\end{equation}
Now, let $\epsilon>0$ and $\varphi\leq v^{-}$ be a continuous function
bounded from below by a constant. Let $\{t_{i}\}_{i\geq1}$ be an
enumeration of the points in $\mathbb{Q}_{t}$. Similarly to the previous
proof, for each $i$, we can find a Borelian partition $\{A_{i,j}\}_{j}$
of $\mathbb{R}^{d}$, positive integers $\{q^{i,j}\}_{j}$, and strategies
$\{\alpha_{i,j}\}_{j}$ such that $\alpha_{i,j}\in\mathscr{A}^{q^{i,j},Q}(t_{i})$
and
\[
\varphi(t_{i},x^{\prime})\leq J(t_{i},x^{\prime};\alpha_{i,j}(b),b)+\epsilon/2\text{ for }(t_{i},x^{\prime})\in A_{i,j}\text{ and }b\in\mathcal{B}(t_{i}).
\]
Denote by $A^{n}\coloneqq\cup_{i\leq n}\cup_{j\leq n}A_{i,j}$. Now,
let $q\geq1$ and $\alpha\in\mathscr{A}^{q,Q}(t)$ be arbitrary. Similarly
to \eqref{eq:dpp_proof_new_beta} in the previous proof, we can construct
a new strategy $\alpha^{n}$ using the strategies $\{\alpha_{i,j}\}_{1\leq i,j\leq n}$
such that $\alpha^{n}\in\mathscr{A}^{q_{n},Q}(t)$ where $q_{n}\coloneqq q+\max_{1\leq i,j,\leq n}\{q^{i,j}\}$
and
\begin{multline*}
J(t,x;\alpha^{n}(b),b)=\mathbb{E}\left[\begin{gathered}J_{0}(t,x;\alpha(b),b;\theta)\boldsymbol{1}_{A^{n}}(\theta,X_{\theta})+J(t,x;\alpha(b),b)\mathbf{1}_{\mathcal{U}\setminus A^{n}}(\theta,X_{\theta})\\
+\sum_{1\leq i,j\leq n}J(\theta,X_{\theta};\alpha_{i,j}(b|_{[t_{i},T]}),b|_{[t_{i},T]})\boldsymbol{1}_{A_{i,j}}(\theta,X_{\theta})
\end{gathered}
\right]\\
\text{for all }b\in\mathcal{B}(t)
\end{multline*}
where it is understood that $X\coloneqq X^{t,x;\alpha(b),b}$ and
$\theta\coloneqq\theta^{\alpha(b),b}$. Once again, similarly to the
previous proof, we can find $n_{0}$ such that
\[
J(t,x;\alpha^{n_{0}}(b),b)\geq\mathbb{E}\left[J_{0}(t,x;\alpha(b),b;\theta)+(v^{-})_{*}(\theta,X_{\theta})\right]-\epsilon\text{ for all }b\in\mathcal{B}(t).
\]
Hence, by \eqref{eq:dpp_proof_reverse},
\[
v^{-}(t,x)\geq\inf_{b\in\mathcal{B}(t)}J(t,x;\alpha^{n_{0}}(b),b)\geq\inf_{b\in\mathcal{B}(t)}\mathbb{E}\left[J_{0}(t,x;\alpha(b),b;\theta)+(v^{-})_{*}(\theta,X_{\theta})\right]-\epsilon.
\]
Using the arbitrariness of $q$, $\alpha$, and $\epsilon$, we obtain
the desired result.
\end{proof}

\section{\label{sec:dpe_proof}Dynamic programming equation}

We prove, in this section, \prettyref{thm:dpe}. We first give a lemma,
which appears in slightly different flavours in \citep[Proposition 2.3]{MR1232083},
\citep[Lemma 5.1]{MR2284012}, \citep[Lemma 4.3]{MR2568293}, and
possibly elsewhere. We provide a proof since our setting is slightly
different from the aforementioned.
\begin{lem}
\label{lem:intervention_results} Let $u,w:\operatorname{cl}\mathcal{O}\rightarrow\mathbb{R}$
be bounded. $\mathcal{M}$ is monotone: if $u\geq w$ pointwise, $\mathcal{M}u\geq\mathcal{M}w$
pointwise. Moreover, $\mathcal{M}u_{*}$ (resp. $\mathcal{M}u^{*}$)
is lower (resp. upper) semicontinuous and $\mathcal{M}u_{*}\leq(\mathcal{M}u)_{*}$
(resp. $(\mathcal{M}u)^{*}\leq\mathcal{M}u^{*}$).
\end{lem}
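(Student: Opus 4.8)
The plan is to reduce the supremum defining $\mathcal{M}$ to one over a \emph{compact} set of impulse values and then to read off all three assertions from elementary properties of suprema of semicontinuous functions; the reduction is where the content lies. Monotonicity I would dispose of immediately: if $u \geq w$ pointwise, then for each $z \in Z$ one has $u(t, x + \Gamma(t,z)) + K(t,z) \geq w(t, x + \Gamma(t,z)) + K(t,z)$, and taking the supremum over $z$ preserves the inequality.

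For the compactness reduction, put $C \coloneqq \|u\|_\infty$; since $u_*$ and $u^*$ both lie between $-C$ and $C$, the bound $\|v\|_\infty \leq C$ holds for every $v \in \{u, u_*, u^*\}$. Fixing any $z_0 \in Z$ (nonempty by \prettyref{assu:impulse_assumptions}) and using continuity of $K$ on the compact set $[0,T] \times \{z_0\}$, I obtain a finite constant $M_0$ with $\mathcal{M}v(t,x) \geq v(t, x + \Gamma(t, z_0)) + K(t, z_0) \geq -C - M_0$ for all $(t,x) \in \operatorname{cl}\mathcal{O}$. On the other hand, \prettyref{assu:impulse_assumptions} \ref{enu:negative_growth_condition}, together with $K < 0$, supplies a radius $r > 0$, independent of $t$, such that $K(t,z) \leq -(2C + M_0)$ whenever $z \in Z$ with $|z| > r$; for such $z$ the candidate value $v(t, x + \Gamma(t,z)) + K(t,z)$ is at most $C - (2C + M_0) = -C - M_0 \leq \mathcal{M}v(t,x)$ and thus never influences the supremum. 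Hence, with $Z_r \coloneqq Z \cap \operatorname{cl}B(0; r)$ (closed and bounded, hence compact; enlarge $r$ if needed so that $z_0 \in Z_r$),
\[
\mathcal{M}v(t,x) = \sup_{z \in Z_r}\bigl\{ v(t, x + \Gamma(t,z)) + K(t,z) \bigr\} \qquad \text{for all } (t,x) \in \operatorname{cl}\mathcal{O},
\]
with $r$ independent of $(t,x)$ and valid simultaneously for all three choices of $v$; the same estimates also show $\mathcal{M}u$ is bounded, so its envelopes are well defined.

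With this representation the rest is routine. The map $(t,x,z) \mapsto (t, x + \Gamma(t,z))$ is continuous by \prettyref{assu:impulse_assumptions}, so $(t,x,z) \mapsto u^*(t, x + \Gamma(t,z)) + K(t,z)$ is upper semicontinuous on $\operatorname{cl}\mathcal{O} \times Z_r$, being the sum of the composition of the u.s.c.\ function $u^*$ with a continuous map and the continuous function $K$; since $Z_r$ is compact, the supremum over $z \in Z_r$ is again upper semicontinuous, i.e.\ $\mathcal{M}u^*$ is u.s.c. Dually, $(t,x,z) \mapsto u_*(t, x + \Gamma(t,z)) + K(t,z)$ is lower semicontinuous, and an arbitrary supremum of lower semicontinuous functions is lower semicontinuous, so $\mathcal{M}u_*$ is l.s.c.\ (compactness of $Z_r$ is not even needed for this one). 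For the envelope estimates, $u \leq u^*$ and monotonicity give $\mathcal{M}u \leq \mathcal{M}u^*$; since $\mathcal{M}u^*$ is u.s.c.\ and $(\mathcal{M}u)^*$ is the smallest u.s.c.\ majorant of $\mathcal{M}u$, this forces $(\mathcal{M}u)^* \leq \mathcal{M}u^*$, and symmetrically $u_* \leq u$ with monotonicity and the fact that $(\mathcal{M}u)_*$ is the largest l.s.c.\ minorant of $\mathcal{M}u$ yields $\mathcal{M}u_* \leq (\mathcal{M}u)_*$. The only step that needs genuine care is the compactness reduction — in particular, checking that $r$ can be chosen uniformly in $(t,x)$ and at once for $u$, $u_*$, and $u^*$; the semicontinuity and envelope steps I would treat as standard.
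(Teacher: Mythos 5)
Your proof is correct and follows essentially the same route as the paper's: the key step in both is using \prettyref{assu:impulse_assumptions} \ref{enu:negative_growth_condition} to confine (near-)optimal impulses to a compact subset of $Z$, after which the semicontinuity of $\mathcal{M}u^{*}$ follows from a supremum over a compact index set of a jointly u.s.c.\ integrand, the semicontinuity of $\mathcal{M}u_{*}$ from the fact that a supremum of l.s.c.\ functions is l.s.c., and the envelope inequalities from monotonicity together with the extremality of the envelopes. The only difference is organizational: you package the compactness reduction as a single uniform statement valid for $u$, $u_{*}$, and $u^{*}$ at once, whereas the paper extracts a convergent subsequence of maximizers along each approximating sequence; both are sound.
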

\begin{proof}
The monotonicity property follows directly from the definition.

Let $\epsilon>0$ and let $(t_{n},x_{n})_{n}$ be a $\operatorname{cl}\mathcal{O}$-valued
sequence converging to some $(t,x)$. Pick $z^{\epsilon}\in Z$ such
that $u_{*}(t,x+\Gamma(t,z^{\epsilon}))+K(t,z^{\epsilon})+\epsilon\geq\mathcal{M}u_{*}(t,x)$.
Then,
\begin{align*}
\liminf_{n\rightarrow\infty}\mathcal{M}u_{*}(t_{n},x_{n}) & \geq\liminf_{n\rightarrow\infty}\left\{ u_{*}(t_{n},x_{n}+\Gamma(t_{n},z^{\epsilon}))+K(t_{n},z^{\epsilon})\right\} \\
 & \geq u_{*}(t,x+\Gamma(t,z^{\epsilon}))+K(t,z^{\epsilon})\geq\mathcal{M}u_{*}(t,x)-\epsilon.
\end{align*}
Since $\epsilon$ was arbitrary, it follows that $\mathcal{M}u_{*}$
is lower semicontinuous. By monotonicity, we have $\mathcal{M}u\geq\mathcal{M}u_{*}$
pointwise, so that we can take lower semicontinuous envelopes of both
sides to get $(\mathcal{M}u)_{*}\geq(\mathcal{M}u_{*})_{*}=\mathcal{M}u_{*}$
as desired.

Lastly, let $(t_{n},x_{n})_{n}$ be a $\operatorname{cl}\mathcal{O}$-valued
sequence converging to some $(t,x)$. Due to the upper semicontinuity
and boundedness of $u^{*}$ and the growth conditions on $K$ (\prettyref{assu:impulse_assumptions}
\ref{enu:negative_growth_condition}), there exists a compact set
$Q\subset Z$ such that for each $n$, there is a $z_{n}\in Q$ with
$\mathcal{M}u^{*}(t_{n},x_{n})=u^{*}(t_{n},x_{n}+\Gamma(t_{n},z_{n}))+K(t_{n},z_{n})$.
Therefore, $(z_{n})_{n}$ admits a convergent subsequence with limit
$\hat{z}\in Z$ and hence
\begin{align*}
\limsup_{n\rightarrow\infty}\mathcal{M}u^{*}(t_{n},x_{n}) & =\limsup_{n\rightarrow\infty}\left\{ u^{*}(t_{n},x_{n}+\Gamma(t_{n},z_{n}))+K(t_{n},z_{n})\right\} \\
 & \leq u^{*}(t,x+\Gamma(t,\hat{z}))+K(t,\hat{z})\leq\mathcal{M}u^{*}(t,x).
\end{align*}
The rest of the proof is similar to previous arguments.
\end{proof}
We are now ready to prove \prettyref{thm:dpe}. The boundary conditions
follow from \prettyref{assu:dpp_and_dpe_assumptions} \ref{enu:enforce_boundary_conditions}
(cf. \citep[Theorem 4.2]{MR2568293}), and are thus ignored in the
proof. We first show that $v^{+}$ is a subsolution.
\begin{proof}[Proof of \prettyref{thm:dpe} (subsolution)]
 Let $(t,x,\varphi)\in\mathcal{O}\times C^{1,2}(\mathcal{O})$ be
such that $((v^{+})^{*}-\varphi)(t,x)=0$ is a \emph{strict} local
maximum of $(v^{+})^{*}-\varphi$. By \prettyref{lem:compact_test_functions} of \prettyref{app:equivalent_definitions},
we can assume $\varphi$ is compactly supported. To prove the result,
we assume
\[
\inf_{b\in B}\left\{ (\partial_{t}+\text{\L}^{b})\varphi(t,x)+f(t,x,b)\right\} <0\text{ and }((v^{+})^{*}-\mathcal{M}(v^{+})^{*})(t,x)>0
\]
and show that this contradicts the first inequality appearing in the
DPP (\prettyref{thm:dpp}). The above implies that for some $\hat{b}\in B$,
\[
(\partial_{t}+\text{\L}^{\hat{b}})\varphi(t,x)+f(t,x,\hat{b})<0.
\]
For each $r>0$, define the set
\begin{equation}
\mathcal{N}_{r}\coloneqq((t-r,t+r)\times B(x;r))\cap\mathcal{O}.\label{eq:N_r}
\end{equation}
By \prettyref{lem:intervention_results}, $\varphi-\mathcal{M}(v^{+})^{*}$
is lower semicontinuous. Since for some $\delta>0$, 
\[
(\varphi-\mathcal{M}(v^{+})^{*})(t,x)=((v^{+})^{*}-\mathcal{M}(v^{+})^{*})(t,x)\geq4\delta,
\]
it follows that we can find $h>0$ (by lower semicontinuity) such
that $t+2h<T$ and the following claims hold on the set $\operatorname{cl}\mathcal{N}_{2h}$:
\begin{gather*}
((v^{+})^{*}-\varphi)(t,x)=0\text{ is a strict maximum of }(v^{+})^{*}-\varphi\text{,}\\
\varphi-\mathcal{M}(v^{+})^{*}\geq3\delta\text{, and }(\partial_{t}+\text{\L}^{\hat{b}})\varphi+f(\cdot,\cdot,\hat{b})\leq0.
\end{gather*}
Since $(t,x)$ is a strict maximum point of $(v^{+})^{*}-\varphi$,
\[
-3\gamma\coloneqq\max_{\operatorname{cl}\mathcal{N}_{2h}\setminus\operatorname{int}\mathcal{N}_{h}}((v^{+})^{*}-\varphi)<0.
\]
Let $(t_{n},x_{n})_{n}$ be a sequence converging to $(t,x)$ with
$v^{+}(t_{n},x_{n})\rightarrow(v^{+})^{*}(t,x)$. Further let
\[
\eta_{n}\coloneqq\varphi(t_{n},x_{n})-v^{+}(t_{n},x_{n})\geq0
\]
and note that $\eta_{n}\rightarrow0$ as $n\rightarrow\infty$.

Now, choose $n$ large enough such that $(t_{n},x_{n})\in\mathcal{N}_{h}$
and
\[
\left(\gamma\wedge\delta\right)-\eta_{n}>0.
\]
With a slight abuse of notation, we also use $\hat{b}$ (chosen in
the previous paragraph) to refer to a constant control in $\mathcal{B}(t_{n})$
taking on the value $\hat{b}\in B$. Let $\hat{a}\in\mathcal{A}(t_{n})$
denote a control with no impulses (i.e., $(\#\hat{a})_{T}=0$). Further
let $Y\coloneqq X^{t_{n},x_{n};\hat{a},\hat{b}}$ and
\[
\psi\coloneqq\inf\left\{ s>t_{n}\colon(s,Y_{s})\notin\mathcal{N}_{h}\right\} .
\]
For each positive integer $m$, also let
\[
\psi_{m}\coloneqq\inf(\left\{ k\left\lceil T\right\rceil /2^{m}\colon k\geq1\right\} \cap[\psi,T])\text{ and }A_{m}\coloneqq\{(\psi_{m},Y_{\psi_{m}})\in\mathcal{N}_{2h}\}.
\]
Note, in particular, that the stopping time $\psi_{m}$ takes only
rational values.
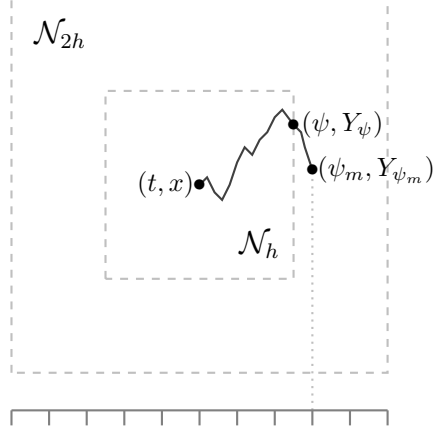
\begin{figure}
\begin{center}
\begin{tikzpicture}
\draw [black!25, thick, dotted] (1.5,-3) -- (1.5,0.2);
\draw [black!50, thick] (-2.5,-3) -- coordinate (x axis mid) (2.5,-3);
\foreach \x in {-2.5,-2.0,...,2.5} \draw [black!50, thick] (\x,-3) -- (\x,-3.2);
\draw [black!25, dashed, thick] (-2.5,-2.5) rectangle (2.5,2.5);
\node at (-1.85,2) {$\mathcal{N}_{2h}$};
\draw [black!25, dashed, thick] (-1.25,-1.25) rectangle (1.25,1.25);
\node at (0.8,-0.8) {$\mathcal{N}_{h}$};
\draw [black!75, thick]  (0,0) -- (0.1,0.1) -- (0.2,-0.1) -- (0.3,-0.2) -- (0.4,0) -- (0.5,0.3) -- (0.6,0.5) -- (0.7,0.4) -- (0.8,0.6) -- (0.9,0.7) -- (1.0,0.9) -- (1.1,1.0) -- (1.25,0.8) -- (1.35,0.7) -- (1.4,0.5) -- (1.5,0.2);
\node at (0, 0) (center) {\footnotesize \textbullet};
\node [left of=center, node distance=0.45cm] {\footnotesize $(t,x)$};
\node at (1.25, 0.8) (exit) {\footnotesize \textbullet};
\node [right of=exit, node distance=0.65cm] {\footnotesize $(\psi,Y_\psi)$};
\node at (1.5,0.2) (approximate) {\footnotesize \textbullet};
\node [right of=approximate, node distance=0.85cm] {\footnotesize $(\psi_m,Y_{\psi_m})$};
\end{tikzpicture}
\end{center}

\caption{Approximating $\psi$ by $\psi_{m}$}
\end{figure}

Recall that $f$, $(v^{+})^{*}$, and $\mathcal{M}(v^{+})^{*}$ are
bounded, $\varphi$ has compact support, and $\boldsymbol{1}_{\Omega\setminus A_{m}}\rightarrow0$
$\mathbb{P}$-almost surely as $m\rightarrow\infty$. As such, we
can choose $m$ large enough such that for all $\tau\in\mathscr{T}_{t_{n}}([t_{n},T])$,
\[
\mathbb{E}\left[\left(\int_{t_{n}}^{\tau}-(\partial_{t}+\text{\L}^{\hat{b}})\varphi(s,Y_{s})ds+\varphi(\tau,Y_{\tau})\right)\boldsymbol{1}_{\Omega\setminus A_{m}}\right]\geq-\left(\gamma\wedge\delta\right)
\]
and 
\[
0\geq\mathbb{E}\left[\left(\begin{gathered}\int_{t_{n}}^{\tau}f(s,Y_{s},\hat{b})ds+(v^{+})^{*}(\tau,Y_{\tau})\boldsymbol{1}_{\{\psi_{m}<\tau\}}\\
+\mathcal{M}(v^{+})^{*}(\tau,Y_{\tau})\boldsymbol{1}_{\{\psi_{m}=\tau\}}+3\left(\gamma\wedge\delta\right)
\end{gathered}
\right)\boldsymbol{1}_{\Omega\setminus A_{m}}\right]-\left(\gamma\wedge\delta\right).
\]
Now, for each $a\coloneqq(\tau_{j},z_{j})_{j\geq1}\in\mathcal{A}(t_{n})$
and $b\in\mathcal{B}(t_{n})$, define $\theta^{a,b}\coloneqq\tau_{1}\wedge\psi_{m}$.
It follows that $\{\theta^{a,b}\}_{(a,b)\in\mathcal{A}(t_{n})\times\mathcal{B}(t_{n})}\subset\mathscr{T}_{t_{n}}(\mathbb{Q}_{t_{n}})$
is a strongly nonanticipative family of stopping times. Dynkin's formula
and the inequalities above imply that, for each $a\coloneqq(\tau_{j},z_{j})_{j\geq1}\in\mathcal{A}(t_{n})$
\begin{align*}
v^{+}(t_{n},x_{n})+\eta_{n} & =\varphi(t_{n},x_{n})\\
 & =\mathbb{E}\left[\left(\int_{t_{n}}^{\theta}-(\partial_{t}+\text{\L}^{\hat{b}})\varphi(s,Y_{s})ds+\varphi(\theta,Y_{\theta})\right)\left(\boldsymbol{1}_{A_{m}}+\boldsymbol{1}_{\Omega\setminus A_{m}}\right)\right]\\
 & \geq\mathbb{E}\left[\left(\begin{gathered}\int_{t_{n}}^{\theta}f(s,Y_{s},\hat{b})ds+(v^{+})^{*}(\theta,Y_{\theta})\boldsymbol{1}_{\{\theta<\tau_{1}\}}\\
+\mathcal{M}(v^{+})^{*}(\theta,Y_{\theta})\boldsymbol{1}_{\{\theta=\tau_{1}\}}+3\left(\gamma\wedge\delta\right)
\end{gathered}
\right)\boldsymbol{1}_{A_{m}}\right]-\left(\gamma\wedge\delta\right)\\
 & \geq\mathbb{E}\left[\begin{gathered}\int_{t_{n}}^{\theta}f(s,Y_{s},\hat{b})ds+(v^{+})^{*}(\theta,Y_{\theta})\boldsymbol{1}_{\{\theta<\tau_{1}\}}\\
+\mathcal{M}(v^{+})^{*}(\theta,Y_{\theta})\boldsymbol{1}_{\{\theta=\tau_{1}\}}
\end{gathered}
\right]+\left(\gamma\wedge\delta\right)
\end{align*}
where it is understood that $\theta\coloneqq\theta^{a,\hat{b}}$.
For the remainder of the proof, we will omit the superscript in $\theta^{a,\hat{b}}$
in the same manner for brevity. Taking supremums in the above, we
get
\begin{equation}
v^{+}(t_{n},x_{n})\geq(\gamma\wedge\delta)-\eta_{n}+\sup_{a\in\mathcal{A}(t_{n})}\mathbb{E}\left[\begin{gathered}\int_{t_{n}}^{\theta}f(s,Y_{s},\hat{b})ds+(v^{+})^{*}(\theta,Y_{\theta})\boldsymbol{1}_{\{\theta<\tau_{1}\}}\\
+\mathcal{M}(v^{+})^{*}(\theta,Y_{\theta})\boldsymbol{1}_{\{\theta=\tau_{1}\}}
\end{gathered}
\right].\label{eq:dpe_proof_1}
\end{equation}
By the DPP, we may choose a compact set $Q\subset\mathbb{R}^{d_{Z}}$
and an integer $q\geq1$ such that
\[
v^{+}(t_{n},x_{n})\leq\frac{(\gamma\wedge\delta)-\eta_{n}}{2}+\sup_{a\in\mathcal{A}^{q,Q}(t_{n})}\mathbb{E}\left[\int_{t_{n}}^{\theta}f(s,X_{s}^{a},\hat{b})ds+(v^{+})^{*}(\theta,X_{\theta}^{a})+\sum_{\tau_{j}\leq\theta}K(\tau_{j},z_{j})\right]
\]
where $X^{a}\coloneqq X^{t_{n},x_{n};a,\hat{b}}$. Since $\theta^{a,\hat{b}}\leq\tau_{1}$
for all $a\in\mathcal{A}(t_{n})$, the suboptimality of multiple impulses
at the same time (\prettyref{assu:dpp_and_dpe_assumptions} \ref{enu:multiple_impulses_are_suboptimal})
implies
\[
v^{+}(t_{n},x_{n})\leq\frac{(\gamma\wedge\delta)-\eta_{n}}{2}+\sup_{a\in\mathcal{A}^{q,Q}(t_{n})}\mathbb{E}\left[\begin{gathered}\int_{t_{n}}^{\theta}f(s,X_{s}^{a},\hat{b})ds+(v^{+})^{*}(\theta,X_{\theta}^{a})\boldsymbol{1}_{\{\theta<\tau_{1}\}}\\
+\mathcal{M}(v^{+})^{*}(\theta,X_{\theta-}^{a})\boldsymbol{1}_{\{\theta=\tau_{1}\}}
\end{gathered}
\right].
\]
Since $\theta^{a,\hat{b}}\leq\tau_{1}$ for all $a\in\mathcal{A}(t_{n})$,
we may safely replace all instances of $X^{a}$ by $Y$ in the above
inequality to obtain a contradiction to \eqref{eq:dpe_proof_1}.
\end{proof}
We now show that $v^{-}$ is a supersolution. This requires establishing
$(v^{-})_{*}\geq\mathcal{M}(v^{-})_{*}$ on $\operatorname{cl}\mathcal{O}$.
If the impulse times were not restricted to rational numbers, this
claim would be trivial. However, the argument in the proof below is
slightly more delicate than usual.
\begin{proof}[Proof of \prettyref{thm:dpe} (supersolution)]
Let $(t,x)\in\operatorname{cl}\mathcal{O}$ and $(t_{n})_{n}$ be
a nonincreasing sequence taking values in $\mathbb{Q}_{t}$ and converging
to $t$. The DPP (\prettyref{thm:dpp}) implies that
\begin{equation}
v^{-}(t,x)\geq\inf_{b\in\mathcal{B}(t)}\mathbb{E}\left[J_{0}(t,x;a,b;t_{n})+(v^{-})_{*}(t_{n},X_{t_{n}}^{t,x;a,b})\right]\text{ for all }n\text{ and }a\in\mathcal{A}^{2,Q}(t).\label{eq:dpe_proof_2}
\end{equation}
Now, let $z\in Z$ be arbitrary. A closer inspection of the proofs
of \prettyref{lem:control_reduction} and \prettyref{thm:dpp} reveals
that we may choose $Q\coloneqq\operatorname{cl}B(0;r)$ in \eqref{eq:dpe_proof_2}
as long as $r$ is sufficiently large. Therefore, we can, without
loss of generality, assume $z\in Q$. Let $a_{n}\coloneqq(\tau_{j},z_{j})_{j\geq1}\in\mathcal{A}^{2,Q}(t)$
denote a control with a single impulse of size $z$ at time $t_{n}$
(i.e., $\tau_{1}(\omega)=t_{n}$, $z_{1}(\omega)=z$, and $(\#a)_{T}(\omega)=1$
for all $\omega\in\Omega_{t,T}$). By \eqref{eq:dpe_proof_2}, for
each $n$, there exists a $b_{n}\in\mathcal{B}(t_{n})$ such that
\[
v^{-}(t,x)\geq\mathbb{E}\left[(v^{-})_{*}(t_{n},X_{t_{n}-}^{t,x;a_{n},b_{n}}+\Gamma(t_{n},z))+K(t_{n},z)\right]-\left(t_{n}-t\right)\left\Vert f\right\Vert _{\infty}-1/n.
\]
Let $X^{n}\coloneqq X^{t,x;a_{n},b_{n}}$ for brevity. Note that (see,
e.g., the proof of \citep[Theorem 2.4]{MR2976505} for details)
\[
\mathbb{E}[\,|X_{t_{n}-}^{n}-x|^{2}\,]\leq\operatorname{const.}\,(1+|x|^{2})\,(t_{n}-t),
\]
implying $X_{t_{n}-}^{n}\rightarrow x$ in $\mathbb{L}^{2}(\Omega_{t,T})$.
Therefore, we can extract a subsequence along which $X_{t_{n}-}^{n}\rightarrow x$
$\mathbb{P}$-almost surely. With a slight abuse of notation, we refer
to this subsequence as $(X_{t_{n}-}^{n})_{n}$. Since $(v^{-})_{*}$
and $t\mapsto K(t,z)$ are bounded ($[t,T]$ is compact and $K$ is
continuous), we can apply Fatou's lemma to get
\begin{align}
v^{-}(t,x) & \geq\mathbb{E}\left[\liminf_{n\rightarrow\infty}\left\{ (v^{-})_{*}(t_{n},X_{t_{n}-}^{n}+\Gamma(t_{n},z))+K(t_{n},z)\right\} \right]\nonumber \\
 & \geq(v^{-})_{*}(t,x+\Gamma(t,z))+K(t,z).\label{eq:dpe_proof_3}
\end{align}
Since $z$ was arbitrary, taking supremums in \eqref{eq:dpe_proof_3}
yields $v^{-}(t,x)\geq\mathcal{M}(v^{-})_{*}(t,x)$. Because this
inequality holds on $\operatorname{cl}\mathcal{O}$, we have $(v^{-})_{*}\geq(\mathcal{M}(v^{-})_{*})_{*}=\mathcal{M}(v^{-})_{*}$
by \prettyref{lem:intervention_results}. 

Let $(t,x,\varphi)\in\mathcal{O}\times C^{1,2}(\mathcal{O})$ be such
that $((v^{-})_{*}-\varphi)(t,x)=0$ is a \emph{strict} local minimum
of $(v^{-})_{*}-\varphi$. By \prettyref{lem:compact_test_functions} of \prettyref{app:equivalent_definitions},
we can assume $\varphi$ is compactly supported. To complete the proof,
we assume
\[
\inf_{b\in B}\left\{ (\partial_{t}+\text{\L}^{b})\varphi(t,x)+f(t,x,b)\right\} >0
\]
and show that this contradicts the second inequality appearing in
the DPP (\prettyref{thm:dpp}). For each $r>0$, define $\mathcal{N}_{r}$
as in \eqref{eq:N_r}. By continuity, we can find $h>0$ such that
$t+2h<T$ and the following claims hold on the set $\operatorname{cl}\mathcal{N}_{2h}$:
\begin{gather*}
((v^{-})_{*}-\varphi)(t,x)=0\text{ is a strict minimum of }(v^{-})_{*}-\varphi\\
\text{ and }\inf_{b\in B}\left\{ (\partial_{t}+\text{\L}^{b})\varphi+f(\cdot,\cdot,b)\right\} \geq0.
\end{gather*}
Since $(t,x)$ is a strict minimum point of $(v^{-})_{*}-\varphi$,
\[
3\gamma\coloneqq\min_{\operatorname{cl}\mathcal{N}_{2h}\setminus\operatorname{int}\mathcal{N}_{h}}((v^{-})_{*}-\varphi)>0.
\]
Let $(t_{n},x_{n})_{n}$ be a sequence converging to $(t,x)$ with
$v^{-}(t_{n},x_{n})\rightarrow(v^{-})_{*}(t,x)$. Further let 
\[
\eta_{n}\coloneqq v^{-}(t_{n},x_{n})-\varphi(t_{n},x_{n})\geq0
\]
and note that $\eta_{n}\rightarrow0$ as $n\rightarrow\infty$.

Now, choose $n$ large enough such that $(t_{n},x_{n})\in\mathcal{N}_{h}$
and 
\[
\eta_{n}-\gamma<0.
\]
Let $\hat{a}\in\mathcal{A}(t_{n})$ denote a control with no impulses
(i.e., $(\#\hat{a})_{T}=0$). For each $b\in\mathcal{B}(t_{n})$,
let $X^{b}\coloneqq X^{t_{n},x_{n};\hat{a},b}$ and
\[
\psi^{b}\coloneqq\inf\left\{ s>t_{n}\colon(s,X_{s}^{b})\notin\mathcal{N}_{h}\right\} .
\]
For each $b\in\mathcal{B}(t_{n})$ and positive integer $m$, also
let
\[
\psi_{m}^{b}\coloneqq\inf(\left\{ k\left\lceil T\right\rceil /2^{m}\colon k\geq1\right\} \cap[\psi^{b},T])\text{ and }A_{m}^{b}\coloneqq\{(\psi_{m}^{b},X_{\psi_{m}^{b}}^{b})\in\mathcal{N}_{2h}\}.
\]

Similarly to the proof of the subsolution, we can choose $m$ large
enough so that for each $b\in\mathcal{B}(t_{n})$ and each $\tau\in\mathscr{T}_{t_{n}}([t_{n},T])$,
\[
\mathbb{E}\left[\left(\int_{t_{n}}^{\tau}-(\partial_{t}+\text{\L}^{b_{s}})\varphi(s,X_{s}^{b})ds+\varphi(\tau,X_{\tau}^{b})\right)\boldsymbol{1}_{\Omega\setminus A_{m}^{b}}\right]\leq\gamma,
\]
and
\[
0\leq\mathbb{E}\left[\left(\int_{t_{n}}^{\tau}f(s,X_{s}^{b},b_{s})ds+(v^{-})_{*}(\tau,X_{\tau}^{b})-3\gamma\right)\boldsymbol{1}_{\Omega\setminus A_{m}^{b}}\right]+\gamma.
\]
 Now, for each $a\in\mathcal{A}(t_{n})$ and $b\in\mathcal{B}(t_{n})$,
define $\theta^{a,b}\coloneqq\psi_{m}^{b}$. Dynkin's formula and
the above inequalities imply that, for each $b\in\mathcal{B}(t_{n})$,
\begin{align*}
v^{-}(t_{n},x_{n})-\eta_{n} & =\varphi(t_{n},x_{n})\\
 & =\mathbb{E}\left[\left(\int_{t_{n}}^{\theta}-(\partial_{t}+\text{\L}^{b_{s}})\varphi(s,X_{s}^{b})ds+\varphi(\theta,X_{\theta}^{b})\right)\left(\boldsymbol{1}_{A_{m}^{b}}+\boldsymbol{1}_{\Omega\setminus A_{m}^{b}}\right)\right]\\
 & \leq\mathbb{E}\left[\left(\int_{t_{n}}^{\theta}f(s,X_{s}^{b},b_{s})ds+(v^{-})_{*}(\theta,X_{\theta}^{b})-3\gamma\right)\boldsymbol{1}_{A_{m}^{b}}\right]+\gamma\\
 & \leq\mathbb{E}\left[\int_{t_{n}}^{\theta}f(s,X_{s}^{b},b_{s})ds+(v^{-})_{*}(\theta,X_{\theta}^{b})\right]-\gamma
\end{align*}
where it is understood that $\theta\coloneqq\theta^{\hat{a},b}$.
For the remainder of the proof, we will omit the superscript in $\theta^{\hat{a},b}$
in the same manner for brevity. Taking infimums, we get
\[
v^{-}(t_{n},x_{n})\leq\eta_{n}-\gamma+\inf_{b\in\mathcal{B}(t_{n})}\mathbb{E}\left[\int_{t_{n}}^{\theta}f(s,X_{s}^{b},b_{s})ds+(v^{-})_{*}(\theta,X_{\theta}^{b})\right].
\]
However, the DPP implies that
\[
v^{-}(t_{n},x_{n})\geq\inf_{b\in\mathcal{B}(t_{n})}\mathbb{E}\left[\int_{t_{n}}^{\theta}f(s,X_{s}^{b},b_{s})ds+(v^{-})_{*}(\theta,X_{\theta}^{b})\right],
\]
a contradiction.
\end{proof}
As promised, we give below an example in which \prettyref{assu:dpp_and_dpe_assumptions}
\ref{enu:enforce_boundary_conditions} is satisfied.
\begin{example}
\label{exa:terminal_continuity}If $g\geq\mathcal{M}g$ pointwise,
it follows that $v^{\pm}(T,\cdot)=g$ pointwise (cf. \citep{belak2016utility,MR3615458})
so that \prettyref{assu:dpp_and_dpe_assumptions} \ref{enu:enforce_boundary_conditions}
is satisfied. To see this, suppose we can find a point $x$ at which
$v^{\pm}(T,x)\neq g(x)$. Then, by the definition of the value functions
and \prettyref{assu:dpp_and_dpe_assumptions} \ref{enu:multiple_impulses_are_suboptimal},
\[
g(x)<v^{\pm}(T,x)=\sup_{z\in Z}\left\{ K(T,z)+g(x+\Gamma(T,z))\right\} =\mathcal{M}g(x),
\]
a contradiction.
\end{example}

\section{\label{sec:comparison_principle_proof}Comparison principle}

We prove, in this section, \prettyref{thm:comparison_principle}.
We first prepare a few lemmas. The result below, which also appears
in \citep[Lemma 5.5]{MR2568293}, follows directly from sup-manipulations.
\begin{lem}
\label{lem:more_intervention_results}Let $u,w:\operatorname{cl}\mathcal{O}\rightarrow\mathbb{R}$
be bounded. $\mathcal{M}$ is convex: 
\[
\mathcal{M}(\lambda u+\left(1-\lambda\right)w)\leq\lambda\mathcal{M}u+\left(1-\lambda\right)\mathcal{M}w\text{ for }0\leq\lambda\leq1.
\]
\end{lem}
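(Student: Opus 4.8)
The plan is to simply unwind the definition of $\mathcal{M}$ in \eqref{eq:intervention} and use the elementary fact that the supremum of a convex combination is dominated by the convex combination of the suprema, together with the trivial decomposition $K(t,z) = \lambda K(t,z) + (1-\lambda) K(t,z)$. First I would note that, since $u$ and $w$ are bounded, $\lambda u + (1-\lambda) w$ is bounded as well, so $\mathcal{M}(\lambda u + (1-\lambda) w)$ is well-defined. Then, fixing $(t,x)\in\operatorname{cl}\mathcal{O}$, $0\leq\lambda\leq1$, and an arbitrary $z\in Z$, I would write
\[
(\lambda u + (1-\lambda) w)(t,x+\Gamma(t,z)) + K(t,z) = \lambda\bigl(u(t,x+\Gamma(t,z)) + K(t,z)\bigr) + (1-\lambda)\bigl(w(t,x+\Gamma(t,z)) + K(t,z)\bigr).
\]
The first bracketed term is at most $\mathcal{M}u(t,x)$ and the second at most $\mathcal{M}w(t,x)$ by definition of $\mathcal{M}$, so the right-hand side is bounded above by $\lambda\mathcal{M}u(t,x) + (1-\lambda)\mathcal{M}w(t,x)$, a quantity independent of $z$. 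Taking the supremum over $z\in Z$ on the left-hand side yields $\mathcal{M}(\lambda u + (1-\lambda) w)(t,x) \leq \lambda\mathcal{M}u(t,x) + (1-\lambda)\mathcal{M}w(t,x)$, and since $(t,x)$ was arbitrary the claimed pointwise inequality follows.

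There is no genuine obstacle here: the statement is, as its preamble indicates, a direct consequence of sup-manipulations, and the argument parallels the monotonicity proof in \prettyref{lem:intervention_results}. The only minor points worth a line of comment are the well-definedness remark above and the observation that the domain-mismatch convention from \prettyref{def:viscosity_solution} plays no role here since $u$ and $w$ are both defined on all of $\operatorname{cl}\mathcal{O}$. I would keep the written proof to two or three sentences.
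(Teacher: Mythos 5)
Your proof is correct and is exactly the ``sup-manipulation'' the paper alludes to (the paper gives no written proof body for this lemma, merely asserting it follows directly from such manipulations). Nothing further is needed.
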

We now perform a change of variables to introduce a positive ``discount''
term $\rho>0$. Concretely, let 
\begin{equation}
F_{\rho}(\cdot,u,Du(\cdot),D^{2}u(\cdot))\coloneqq\begin{cases}
\min\{-\inf_{b\in B}\{(\partial_{t}+\text{\L}^{b}-\rho)u+f_{\rho}^{b}\},u-\mathcal{M}_{\rho}u\} & \text{on }\mathcal{O}\\
\min\{u-g_{\rho},u-\mathcal{M}_{\rho}u\} & \text{on }\partial^{+}\mathcal{O}
\end{cases}\label{eq:hjbi_discounted}
\end{equation}
where $f_{\rho}^{b}(t,x)\coloneqq f_{\rho}(t,x,b)\coloneqq e^{\rho t}f(t,x,b)$,
$g_{\rho}(x)\coloneqq e^{\rho T}g(x)$, $K_{\rho}(t,z)\coloneqq e^{\rho t}K(t,z)$,
and
\[
\mathcal{M}_{\rho}u(t,x)\coloneqq\sup_{z\in Z}\left\{ u(t,x+\Gamma(t,z))+K_{\rho}(t,z)\right\} .
\]
The notion of viscosity solution for $F_{\rho}=0$ is identical to
that of $F=0$ (see \prettyref{def:viscosity_solution}).

Note that if $u$ is a subsolution of $F=0$, $e^{\rho t}u$ is a
subsolution of $F_{\rho}=0$. The same claim holds for supersolutions.
Therefore, we need only consider uniqueness under some $\rho>0$,
which we pick arbitrarily and leave fixed for the remainder of this
section. Note that Lemmas \ref{lem:intervention_results} and \ref{lem:more_intervention_results}
remain valid for $\mathcal{M}_{\rho}$.

The following lemma allows us to construct a family of ``strict''
supersolutions of $F_{\rho}=0$ by taking combinations of an ordinary
supersolution and a specific constant. This technique appears in \citep[Lemma 3.2]{MR1232083},
and is needed due to the implicit form of the obstacle.
\begin{lem}
\label{lem:strict_supersolutions} Let $w$ be a supersolution of
\eqref{eq:hjbi_discounted}, $c\coloneqq\max\{(\Vert f_{\rho}\Vert_{\infty}+1)/\rho,\Vert g_{\rho}\Vert_{\infty}+1\}$,
and $\xi\coloneqq\min\{1,K_{0}\}$. Then, for each $0<\lambda<1$,
$w_{\lambda}\coloneqq(1-\lambda)w+\lambda c$ is a supersolution of
\begin{equation}
F_{\rho}(\cdot,u,Du(\cdot),D^{2}u(\cdot))-\lambda\xi=0\text{ on }\operatorname{cl}\mathcal{O}.\label{eq:translated_hjbi}
\end{equation}
\end{lem}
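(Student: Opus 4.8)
The plan is to verify directly the two inequalities of \prettyref{def:viscosity_solution} characterising a supersolution of \eqref{eq:translated_hjbi}, reading each one off from the corresponding inequality for $w$ as a supersolution of \eqref{eq:hjbi_discounted}. First I would record that, since $1-\lambda>0$ and $\lambda c$ is constant, $(w_{\lambda})_{*}=(1-\lambda)w_{*}+\lambda c$. Then, given $(t,x,\varphi)\in\mathcal{O}\times C^{1,2}(\mathcal{O})$ with $(w_{\lambda})_{*}-\varphi$ attaining a local minimum at $(t,x)$, the rescaled test function $\tilde{\varphi}\coloneqq(\varphi-\lambda c)/(1-\lambda)$ satisfies $w_{*}-\tilde{\varphi}=\bigl((w_{\lambda})_{*}-\varphi\bigr)/(1-\lambda)$, so $w_{*}-\tilde{\varphi}\in C^{1,2}(\mathcal{O})$ attains a local minimum at $(t,x)$ as well; the supersolution property of $w$ then gives \emph{both} $-\inf_{b\in B}\{(\partial_{t}+\text{\L}^{b}-\rho)\tilde{\varphi}+f_{\rho}^{b}\}(t,x)\geq0$ and $(w_{*}-\mathcal{M}_{\rho}w_{*})(t,x)\geq0$. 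On $\partial^{+}\mathcal{O}$ no test function is needed and the supersolution property yields $(w_{*}-g_{\rho})(t,x)\geq0$ and $(w_{*}-\mathcal{M}_{\rho}w_{*})(t,x)\geq0$.

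The second step is to transport each inequality through the affine change of variables, picking up a strictly positive remainder. Since $\partial_{t}+\text{\L}^{b}$ annihilates constants, $\varphi=(1-\lambda)\tilde{\varphi}+\lambda c$ gives
\[
(\partial_{t}+\text{\L}^{b}-\rho)\varphi+f_{\rho}^{b}=(1-\lambda)\bigl[(\partial_{t}+\text{\L}^{b}-\rho)\tilde{\varphi}+f_{\rho}^{b}\bigr]+\lambda\bigl(f_{\rho}^{b}-\rho c\bigr),
\]
and the choice $c\geq(\Vert f_{\rho}\Vert_{\infty}+1)/\rho$ makes $f_{\rho}^{b}-\rho c\leq-1$ pointwise; taking $\inf_{b\in B}$ and negating yields $-\inf_{b}\{(\partial_{t}+\text{\L}^{b}-\rho)\varphi+f_{\rho}^{b}\}(t,x)\geq(1-\lambda)\cdot0+\lambda\geq\lambda\xi$. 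For the obstacle term I would combine the convexity of $\mathcal{M}_{\rho}$ (\prettyref{lem:more_intervention_results}) with the bound $\mathcal{M}_{\rho}c(t,x)=c+\sup_{z\in Z}K_{\rho}(t,z)\leq c-K_{0}$, which holds because $K_{\rho}(t,z)=e^{\rho t}K(t,z)\leq-K_{0}$ by \prettyref{assu:impulse_assumptions} \ref{enu:negative_growth_condition} together with $e^{\rho t}\geq1$, to get
\[
\mathcal{M}_{\rho}(w_{\lambda})_{*}=\mathcal{M}_{\rho}\bigl((1-\lambda)w_{*}+\lambda c\bigr)\leq(1-\lambda)\mathcal{M}_{\rho}w_{*}+\lambda(c-K_{0}),
\]
hence $(w_{\lambda})_{*}-\mathcal{M}_{\rho}(w_{\lambda})_{*}\geq(1-\lambda)(w_{*}-\mathcal{M}_{\rho}w_{*})+\lambda K_{0}\geq\lambda K_{0}\geq\lambda\xi$. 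On $\partial^{+}\mathcal{O}$ this same obstacle estimate applies, and from $c\geq\Vert g_{\rho}\Vert_{\infty}+1$ one has $(w_{\lambda})_{*}-g_{\rho}=(1-\lambda)(w_{*}-g_{\rho})+\lambda(c-g_{\rho})\geq\lambda\geq\lambda\xi$. Combining the interior (resp. boundary) pair of estimates shows the minimum defining $F_{\rho}$ at $(t,x)$ is at least $\lambda\xi$, i.e. $F_{\rho}(t,x,(w_{\lambda})_{*},D\varphi,D^{2}\varphi)-\lambda\xi\geq0$ (resp. its boundary analogue), which is the claim.

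The one genuinely delicate point is the obstacle term: because $\mathcal{M}_{\rho}$ carries the additive impulse cost $K_{\rho}$, it does not commute with affine combinations, so $\mathcal{M}_{\rho}\bigl((1-\lambda)w_{*}+\lambda c\bigr)$ is \emph{not} equal to $(1-\lambda)\mathcal{M}_{\rho}w_{*}+\lambda\mathcal{M}_{\rho}c$; it is precisely the inequality direction supplied by convexity, together with the strict negativity $K_{\rho}\leq-K_{0}<0$, that makes the discrepancy land on the favourable side and manufactures the slack $\lambda K_{0}$. Everything else is bookkeeping: the constants $c$ and $\xi=\min\{1,K_{0}\}$ are tailored so that the running-cost term contributes at least $\lambda$, the terminal-cost term contributes at least $\lambda$, and the impulse term contributes at least $\lambda K_{0}$, each of which dominates $\lambda\xi$.
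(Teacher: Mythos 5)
Your proof is correct and follows essentially the same route as the paper's: the same rescaled test function $\tilde{\varphi}=(\varphi-\lambda c)/(1-\lambda)$, the same use of the convexity of $\mathcal{M}_{\rho}$ together with $c-\mathcal{M}_{\rho}c\geq K_{0}$, and the same role for the constants $c$ and $\xi$. The only (harmless) difference is in the PDE term, where you push the uniform bound $f_{\rho}^{b}-\rho c\leq-1$ through the infimum over $b$ directly, whereas the paper selects a particular $b\in B$ via compactness and continuity.
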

\begin{proof}
Below, we treat $c$ both as a constant and a constant function on
$\operatorname{cl}\mathcal{O}$ taking the value $c$. First, note
that for $(t,x)\in\mathcal{O}$,
\begin{multline*}
\min\left\{ -\inf_{b\in B}\left\{ (\partial_{t}+\text{\L}^{b}-\rho)c(t,x)+f_{\rho}(t,x,b)\right\} ,(c-\mathcal{M}_{\rho}c)(t,x)\right\} \\
\geq\min\left\{ \rho c-\left\Vert f_{\rho}\right\Vert _{\infty},K_{0}\right\} \geq\xi.
\end{multline*}
Similarly, for $(t,x)\in\partial^{+}\mathcal{O}$,
\[
\min\left\{ c(t,x)-g_{\rho}(x),(c-\mathcal{M}_{\rho}c)(t,x)\right\} \geq\min\left\{ c-\left\Vert g_{\rho}\right\Vert _{\infty},K_{0}\right\} \geq\xi.
\]
Note that we have proved that $c$ is a \emph{classical} supersolution
of $F_{\rho}-\xi=0$. 

Without loss of generality, we assume that $w$ is lower semicontinuous
(otherwise, replace $w$ by its lower semicontinuous envelope). Now,
let $(t,x,\varphi_{\lambda})\in\mathcal{O}\times C^{1,2}(\mathcal{O})$
be such that $(w_{\lambda}-\varphi_{\lambda})(t,x)=0$ is a local
minimum of $w_{\lambda}-\varphi_{\lambda}$. Further letting $\lambda^{\prime}\coloneqq1-\lambda$
for brevity and $\varphi\coloneqq(\varphi_{\lambda}-\lambda c)/\lambda^{\prime}$,
it follows that $(t,x)$ is also a local minimum point of $w-\varphi$
since
\begin{align*}
\lambda^{\prime}\left(w-\varphi\right) & =\lambda^{\prime}\left(w-\left(\varphi_{\lambda}-\lambda c\right)/\lambda^{\prime}\right)=\lambda^{\prime}w+\lambda c-\varphi_{\lambda}=w_{\lambda}-\varphi_{\lambda}.
\end{align*}
We now seek to show that
\[
-\inf_{b\in B}\left\{ (\partial_{t}+\text{\L}^{b}-\rho)\varphi_{\lambda}(t,x)+f_{\rho}(t,x,b)\right\} \geq\lambda\xi,
\]
for which it is sufficient to show that for some choice of $b\in B$,
\[
(\partial_{t}+\text{\L}^{b}-\rho)\varphi_{\lambda}(t,x)+f_{\rho}(t,x,b)\leq-\lambda\xi.
\]
In particular, using the supersolution property of $w$ along with
the continuity of $\varphi$ and compactness of $B$, there exists
$b\in B$ such that%
\[
0\geq\lambda^{\prime}\left((\partial_{t}+\text{\L}^{b}-\rho)\varphi(t,x)+f_{\rho}(t,x,b)\right)\geq(\partial_{t}+\text{\L}^{b}-\rho)\varphi_{\lambda}(t,x)+f_{\rho}(t,x,b)+\lambda\xi.
\]

On $\operatorname{cl}\mathcal{O}$, since $w$ is a supersolution,
we have that $w\geq\mathcal{M}_{\rho}w$. Along with the convexity
of $\mathcal{M}_{\rho}$ (\prettyref{lem:more_intervention_results}),
this yields 
\[
w_{\lambda}-\mathcal{M}_{\rho}w_{\lambda}\geq w_{\lambda}-\lambda^{\prime}\mathcal{M}_{\rho}w-\lambda\mathcal{M}_{\rho}c\geq w_{\lambda}-\lambda^{\prime}w-\lambda\mathcal{M}_{\rho}c=\lambda(c-\mathcal{M}_{\rho}c)\geq\lambda\xi.
\]
Lastly, on $\partial^{+}\mathcal{O}$, we have 
\[
w_{\lambda}-g_{\rho}=\lambda^{\prime}\left(w-g_{\rho}\right)+\lambda\left(c-g_{\rho}\right)\geq\lambda\xi,
\]
so that $w_{\lambda}$ satisfies the boundary condition.
\end{proof}
We give a result that describes the regularity of the ``non-impulse''
part of the HJBI-QVI \eqref{eq:hjbi_discounted}. In the lemma statement,
$I_{d}$ denotes the identity matrix in $\mathbb{R}^{d\times d}$.
\begin{lem}
\label{lem:non_impulse_continuity}Let $H$ be given by
\begin{equation}
H(t,x,r,p,X)\coloneqq-\inf_{b\in B}\left\{ \frac{1}{2}\operatorname{trace}(\sigma(x,b)\sigma^{\intercal}(x,b)X)+\left\langle \mu(x,b),p\right\rangle -\rho r+f_{\rho}(t,x,b)\right\} .\label{eq:hamiltonian}
\end{equation}
Then, there exists a positive constant $c$ such that for each compact
set $D\subset\mathbb{R}^{d}$, there exists a modulus of continuity
$\omega$ such that for all $(t,x,r,X),(s,y,r^{\prime},Y)\in[0,T]\times D\times\mathbb{R}\times\mathscr{S}(d)$
satisfying
\[
\left(\begin{array}{cc}
X\\
 & -Y
\end{array}\right)\preceq3\alpha\left(\begin{array}{cc}
I_{d} & -I_{d}\\
-I_{d} & I_{d}
\end{array}\right)
\]
and all positive constants $\alpha$ and $\epsilon$,
\begin{multline*}
H(s,y,r^{\prime},\alpha\left(x-y\right)-\epsilon y,Y-\epsilon I_{d})-H(t,x,r,\alpha\left(x-y\right)+\epsilon x,X+\epsilon I_{d})\\
\leq\rho\left(r^{\prime}-r\right)+c\,(\alpha\,|x-y|^{2}+\epsilon\,(1+|x|^{2}+|y|^{2}))+\omega(\left|(t,x)-(s,y)\right|).
\end{multline*}
\end{lem}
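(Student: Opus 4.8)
The plan is to bound the difference of Hamiltonians in the usual Bellman-type fashion. Since $H(s,y,\dots)-H(t,x,\dots)=\inf_{b\in B}\Phi_b(t,x,\dots)-\inf_{b\in B}\Phi_b(s,y,\dots)\le\sup_{b\in B}\bigl(\Phi_b(t,x,\dots)-\Phi_b(s,y,\dots)\bigr)$, where $\Phi_b(t,x,r,p,Z)\coloneqq\tfrac12\operatorname{trace}(\sigma\sigma^{\intercal}(x,b)Z)+\langle\mu(x,b),p\rangle-\rho r+f_\rho(t,x,b)$ and the infimum over the compact set $B$ of the continuous-in-$b$ map $\Phi_b$ is attained, it suffices to estimate, uniformly in $b\in B$, the quantity $\Phi_b(t,x,r,\alpha(x-y)+\epsilon x,X+\epsilon I_d)-\Phi_b(s,y,r',\alpha(x-y)-\epsilon y,Y-\epsilon I_d)$. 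Expanding this difference and using $\operatorname{trace}(\sigma\sigma^{\intercal}(x,b)(I_d))=\lVert\sigma(x,b)\rVert^2$, it splits into six pieces: (i) $\tfrac12\bigl(\operatorname{trace}(\sigma\sigma^{\intercal}(x,b)X)-\operatorname{trace}(\sigma\sigma^{\intercal}(y,b)Y)\bigr)$; (ii) $\tfrac\epsilon2\bigl(\lVert\sigma(x,b)\rVert^2+\lVert\sigma(y,b)\rVert^2\bigr)$; (iii) $\langle\mu(x,b)-\mu(y,b),\alpha(x-y)\rangle$; (iv) $\epsilon\langle\mu(x,b),x\rangle+\epsilon\langle\mu(y,b),y\rangle$; (v) $\rho(r'-r)$; and (vi) $f_\rho(t,x,b)-f_\rho(s,y,b)$.

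Each piece is then controlled separately. Term (v) is passed verbatim to the right-hand side. For term (i) I would run the classical Crandall--Ishii--Lions structure computation: feeding the pairs $(\sigma(x,b)e_i,\sigma(y,b)e_i)$, $i=1,\dots,d_W$, into the quadratic form of the matrix inequality assumed in the statement and summing yields $\operatorname{trace}(\sigma\sigma^{\intercal}(x,b)X)-\operatorname{trace}(\sigma\sigma^{\intercal}(y,b)Y)\le 3\alpha\lVert\sigma(x,b)-\sigma(y,b)\rVert^2\le 3\alpha L_\sigma^2|x-y|^2$ by the Lipschitz bound of \prettyref{assu:stochastic_assumptions}, so (i)$\,\le c\,\alpha|x-y|^2$. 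Term (iii) is $\le\alpha|\mu(x,b)-\mu(y,b)|\,|x-y|\le c\,\alpha|x-y|^2$ by the same Lipschitz bound. Terms (ii) and (iv) are handled by the linear growth of $\sigma$ and $\mu$ noted at the start of \prettyref{sec:regularity}: $\lVert\sigma(x,b)\rVert^2+\lVert\sigma(y,b)\rVert^2\le c(1+|x|^2+|y|^2)$ and $|\langle\mu(x,b),x\rangle|\le|\mu(x,b)|\,|x|\le c(1+|x|^2)$, so (ii)$+$(iv)$\,\le c\,\epsilon(1+|x|^2+|y|^2)$. The key point for the bookkeeping is that all constants appearing in (i)--(iv) depend only on $T$ and on the Lipschitz/linear-growth constants of $\mu,\sigma$, hence on nothing from $D$, which is what allows $c$ to be fixed once and for all. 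Finally, for term (vi), since $f$ is bounded and continuous (\prettyref{assu:stochastic_assumptions}) and $B$ is compact, $f_\rho=e^{\rho\cdot}f$ is uniformly continuous on the compact set $[0,T]\times D\times B$; writing $f_\rho(t,x,b)-f_\rho(s,y,b)=e^{\rho t}\bigl(f(t,x,b)-f(s,y,b)\bigr)+\bigl(e^{\rho t}-e^{\rho s}\bigr)f(s,y,b)$ and using $|e^{\rho t}-e^{\rho s}|\le\rho e^{\rho T}|t-s|$ bounds (vi) by $\omega(|(t,x)-(s,y)|)$ for a modulus $\omega$ that depends on $D$ but not on $b$. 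Summing the six bounds and taking the supremum over $b\in B$ gives the asserted inequality.

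The only genuinely non-routine step is the second-order estimate (i) against the matrix inequality; I would carry it out exactly as in the Crandall--Ishii--Lions User's Guide (Example~3.6). Everything else is elementary manipulation with Lipschitz continuity, linear growth, and uniform continuity on a compact set. The subtle point to keep track of is the split of responsibilities between $c$ and $\omega$: the $\alpha$- and $\epsilon$-scaled terms must be estimated using only the global Lipschitz and linear-growth constants so that $c$ is universal, whereas the residual continuity of the running cost — including the $t$-dependence carried by the factor $e^{\rho t}$, where $f$ is merely continuous rather than Lipschitz — is the single place where compactness of $D$ is invoked, and it is absorbed entirely into $\omega$.
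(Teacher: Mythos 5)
Your proposal is correct and follows essentially the same route as the paper: bound the difference of infima by a supremum over $b\in B$, use the block-matrix trace computation against the Crandall--Ishii matrix inequality for the second-order terms, Lipschitz continuity for the $\alpha|x-y|^2$ terms, linear growth for the $\epsilon$-terms, and uniform continuity of $f_\rho$ on $[0,T]\times D\times B$ for the modulus $\omega$. Your explicit bookkeeping of which constants are $D$-independent (so that $c$ is universal) matches the intent of the paper's argument.
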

\begin{proof}
Let $M(b)\coloneqq\sigma(x,b)$ and $N(b)\coloneqq\sigma(y,b)$. First,
note that
\begin{multline}
H(s,y,r^{\prime},\alpha\left(x-y\right)-\epsilon y,Y-\epsilon I_{d})-H(t,x,r,\alpha\left(x-y\right)+\epsilon x,X+\epsilon I_{d})\\
\leq\sup_{b\in B}\left\{ \begin{gathered}\operatorname{trace}(M(b)M(b)^{\intercal}\left(X+\epsilon I_{d}\right)-N(b)N(b)^{\intercal}\left(Y-\epsilon I_{d}\right))\\
+\alpha\left\langle \mu(x,b)-\mu(y,b),x-y\right\rangle +\epsilon\left\langle \mu(x,b),x\right\rangle +\epsilon\left\langle \mu(y,b),y\right\rangle \\
+\rho\left(r-r^{\prime}\right)+f_{\rho}(t,x,b)-f_{\rho}(s,y,b)
\end{gathered}
\right\} .\label{eq:hamiltonian_difference}
\end{multline}
Omitting the dependence on $b$ for brevity and employing the linear
growth of $\mu$ and the inequality %
$|x|\leq1+|x|^{2}$,
\[
\epsilon\left\langle \mu(x),x\right\rangle +\epsilon\left\langle \mu(y),y\right\rangle \leq\operatorname{const.}\epsilon\,((1+|x|)|x|+(1+|y|)|y|)\leq\operatorname{const.}\epsilon\,(1+|x|^{2}+|y|^{2}).
\]
Denoting by $\left\Vert \cdot\right\Vert _{F}$ the Frobenius norm,
the linear growth of $\sigma$ similarly yields
\[
\epsilon\operatorname{trace}(MM^{\intercal})+\epsilon\operatorname{trace}(NN^{\intercal})=\epsilon\left\Vert M\right\Vert _{F}^{2}+\epsilon\left\Vert N\right\Vert _{F}^{2}\leq\operatorname{const.}\epsilon\,(1+|x|^{2}+|y|^{2}).
\]
We also have the inequalities
\[
\alpha\left\langle \mu(x)-\mu(y),x-y\right\rangle \leq\alpha\left|\mu(x)-\mu(y)\right|\left|x-y\right|
\]
and
\begin{align*}
\operatorname{trace}(MM^{\intercal}X-NN^{\intercal}Y) & =\operatorname{trace}\left(\left(\begin{array}{cc}
MM^{\intercal} & MN^{\intercal}\\
NM^{\intercal} & NN^{\intercal}
\end{array}\right)\left(\begin{array}{cc}
X\\
 & -Y
\end{array}\right)\right)\\
 & \leq3\alpha\operatorname{trace}\left(\left(\begin{array}{cc}
MM^{\intercal} & MN^{\intercal}\\
NM^{\intercal} & NN^{\intercal}
\end{array}\right)\left(\begin{array}{cc}
I_{d} & -I_{d}\\
-I_{d} & I_{d}
\end{array}\right)\right)\\
 & =3\alpha\operatorname{trace}\left(\left(M-N\right)\left(M-N\right)^{\intercal}\right)\\
 & =3\alpha\left\Vert M-N\right\Vert _{F}^{2}.
\end{align*}
The desired result follows by applying the above inequalities to \eqref{eq:hamiltonian_difference}
and invoking the uniform continuity of $f$ on the compact set $[0,T]\times D\times B$
and the Lipschitzness of $\mu$ and $\sigma$.
\end{proof}
The following appears in \citep[Problem 2.4.17]{MR1751334}.
\begin{lem}
\label{lem:limsup_product}Let $(a_{n})_{n}$ and $(b_{n})_{n}$ be
sequences of nonnegative numbers. If $a_{n}$ converges to a positive
number $a$, $\limsup_{n\rightarrow\infty}a_{n}b_{n}=a\limsup_{n\rightarrow\infty}b_{n}$.
\end{lem}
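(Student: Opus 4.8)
The plan is to reduce the two-sided identity to a single one-sided inequality that can then be applied twice. First I would isolate the auxiliary claim: whenever $c_{n}\rightarrow c$ with $c>0$ and $c_{n},d_{n}\geq 0$ for all $n$, one has $\limsup_{n}c_{n}d_{n}\leq c\limsup_{n}d_{n}$, under the convention $c\cdot(+\infty)=+\infty$. If $\limsup_{n}d_{n}=+\infty$ there is nothing to prove, so suppose $L\coloneqq\limsup_{n}d_{n}<\infty$. Fix $\epsilon>0$; by convergence of $(c_{n})_{n}$ and the definition of $\limsup$, there is an index $N$ with $c_{n}<c+\epsilon$ and $d_{n}<L+\epsilon$ for all $n\geq N$, whence $0\leq c_{n}d_{n}<(c+\epsilon)(L+\epsilon)$ and therefore $\limsup_{n}c_{n}d_{n}\leq(c+\epsilon)(L+\epsilon)$. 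Letting $\epsilon\downarrow 0$ gives the claim.

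Next I would apply the claim with $(c_{n},d_{n})=(a_{n},b_{n})$ to obtain $\limsup_{n}a_{n}b_{n}\leq a\limsup_{n}b_{n}$. For the reverse inequality I exploit that $a>0$: since $a_{n}\rightarrow a$, we have $a_{n}>0$ for all $n$ past some index, and as $\limsup$ depends only on the tail of a sequence we may assume $a_{n}>0$ throughout; then $1/a_{n}\rightarrow 1/a>0$ and $a_{n}b_{n}\geq 0$, so the claim applied with $(c_{n},d_{n})=(1/a_{n},a_{n}b_{n})$ yields $\limsup_{n}b_{n}=\limsup_{n}(1/a_{n})(a_{n}b_{n})\leq(1/a)\limsup_{n}a_{n}b_{n}$, i.e. $a\limsup_{n}b_{n}\leq\limsup_{n}a_{n}b_{n}$. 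Combining the two inequalities gives the stated equality, and the same chain propagates the value $+\infty$ correctly in the degenerate case $\limsup_{n}b_{n}=+\infty$.

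The only point requiring care is the bookkeeping with $+\infty$: one must check that the convention $a\cdot(+\infty)=+\infty$ is consistent with both inequalities, and that passing to the reciprocals $1/a_{n}$ is legitimate — this is precisely where the hypothesis $a>0$, rather than $a\geq 0$, is used, the statement being false for $a=0$ (take $a_{n}=1/n$, $b_{n}=n$). Beyond this, the argument is entirely elementary and I anticipate no real obstacle; a slightly longer but equivalent route would instead extract a subsequence $(b_{n_{k}})_{k}$ with $b_{n_{k}}\rightarrow\limsup_{n}b_{n}$, note $a_{n_{k}}b_{n_{k}}\rightarrow a\limsup_{n}b_{n}$, and pair this with the $\epsilon$-argument above for the matching upper bound.
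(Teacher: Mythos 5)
Your argument is correct and complete. Note that the paper itself gives no proof of this lemma — it is cited as Problem 2.4.17 of a real-analysis textbook — so there is nothing to compare against; your two-step reduction (one-sided inequality applied to $(a_{n},b_{n})$ and then to $(1/a_{n},a_{n}b_{n})$) is the standard route, and you correctly flag the two points that actually matter: the hypothesis $a>0$ is what legitimizes passing to reciprocals, and the convention $a\cdot(+\infty)=+\infty$ must be tracked, which is essential here since the lemma is invoked in the comparison-principle proof precisely in the case $\limsup_{n}b_{n}=+\infty$ (with $b_{n}=\alpha_{n}$).
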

We are finally ready to prove the comparison principle. We mention
that  we cannot directly use the ``parabolic'' Crandall-Ishii lemma
\citep[Theorem 8.3]{MR1118699} in the proof since condition (8.5)
of \citep{MR1118699} cannot be satisfied due to the impulse term
(see also the proof of \citep[Lemma 11.1]{MR3268054} for a more in-depth
discussion of this issue). We rely instead on the ``elliptic'' Crandall-Ishii
lemma \citep[Theorem 3.2]{MR1118699} and employ a variable-doubling
argument inspired by \citep[Lemma 8]{MR1073054}. Below, we use the
parabolic semijets $\mathscr{P}_{\mathcal{O}}^{2,\pm}u(t,x)$ and
their closures $\operatorname{cl}(\mathscr{P}_{\mathcal{O}}^{2,\pm}u(t,x))$,
defined in \citep[\S 8]{MR1118699}. 
\begin{proof}[Proof of \prettyref{thm:comparison_principle}]
 Let $u$ be a bounded subsolution and $w$ be a bounded supersolution
of \eqref{eq:hjbi_discounted}. As in the proof of \prettyref{lem:strict_supersolutions},
we can assume that $u$ (resp. $w$) is upper (resp. lower) semicontinuous
(otherwise, replace $u$ and $w$ by their semicontinuous envelopes).
Let $c$ be given as in \prettyref{lem:strict_supersolutions} and
$w_{m}\coloneqq(1-1/m)w+c/m$ for all integers $m>1$. Note that
\[
\sup_{\mathcal{O}}\left\{ u-w_{m}\right\} =\sup_{\mathcal{O}}\left\{ u-w+\left(w-c\right)/m\right\} \geq\sup_{\mathcal{O}}\left\{ u-w\right\} -\left(\left\Vert w\right\Vert _{\infty}+c\right)/m.
\]
Therefore, to prove the comparison principle, it is sufficient to
show $u-w_{m}\leq0$ (pointwise) along a subsequence of $(w_{m})_{m}$.
We establish it for all $m$.

To that end, fix $m$ and suppose $\delta\coloneqq\sup_{\mathcal{O}}\{u-w_{m}\}>0$.
Letting $\nu>0$, we can find $(t^{\nu},x^{\nu})\in\mathcal{O}$ such
that $(u-w_{m})(t^{\nu},x^{\nu})\geq\delta-\nu$. Let
\[
\varphi(t,x,s,y)\coloneqq\frac{\alpha}{2}\left(\left|t-s\right|^{2}+\left|x-y\right|^{2}\right)+\frac{\epsilon}{2}\left(\left|x\right|^{2}+\left|y\right|^{2}\right)
\]
be a smooth function parameterized by constants $\alpha>0$ and $0<\epsilon\leq1$.
Further let $\Phi(t,x,s,y)\coloneqq u(t,x)-w_{m}(s,y)-\varphi(t,x,s,y)$
and note that
\begin{align*}
\sup_{(t,x,s,y)\in([0,T]\times\mathbb{R}^{d})^{2}}\Phi(t,x,s,y) & \geq\sup_{(t,x)\in[0,T]\times\mathbb{R}^{d}}\left\{ (u-w_{m})(t,x)-\epsilon\left|x\right|^{2}\right\} \\
 & \geq(u-w_{m})(t^{\nu},x^{\nu})-\epsilon|x^{\nu}|^{2}\\
 & \geq\delta-\nu-\epsilon|x^{\nu}|^{2}.
\end{align*}
We henceforth assume $\nu$ and $\epsilon$ are small enough (e.g.,
pick $\nu\leq\delta/4$ and $\epsilon\leq\delta/(4|x^{\nu}|^{2})$)
to ensure that $\delta-\nu-\epsilon|x^{\nu}|^{2}$ is positive.

Since $u$ and $w_{m}$ are bounded (and thus trivially of subquadratic
growth), it follows that $\Phi$ admits a maximum at $(t_{\alpha},x_{\alpha},s_{\alpha},y_{\alpha})\in([0,T]\times\mathbb{R}^{d})^{2}$
such that
\begin{equation}
\left\Vert u\right\Vert _{\infty}+\left\Vert w_{m}\right\Vert _{\infty}\geq u(t_{\alpha},x_{\alpha})-w_{m}(s_{\alpha},y_{\alpha})\geq\delta-\nu-\epsilon|x^{\nu}|^{2}+\varphi(t_{\alpha},x_{\alpha},s_{\alpha},y_{\alpha}).\label{eq:comparison_principle_proof_inequality_0}
\end{equation}
 Since $-\epsilon|x^{\nu}|^{2}\geq-|x^{\nu}|^{2}$, the above inequality
implies that 
\[
\alpha\left(|t_{\alpha}-s_{\alpha}|^{2}+|x_{\alpha}-y_{\alpha}|^{2}\right)+\epsilon\left(|x_{\alpha}|^{2}+|y_{\alpha}|^{2}\right)
\]
is bounded independently of $\alpha>0$ and $0<\epsilon\leq1$ (but
not of $\nu$ since $|x^{\nu}|$ may be arbitrarily large). 

Now, for fixed $\epsilon$, consider some sequence of increasing $\alpha$,
say $(\alpha_{n})_{n}$, such that $\alpha_{n}\rightarrow\infty$.
To each $\alpha_{n}$ is associated a maximum point $(t_{n},x_{n},s_{n},y_{n})\coloneqq(t_{\alpha_{n}},x_{\alpha_{n}},s_{\alpha_{n}},y_{\alpha_{n}})$.
By the discussion above, $\{(t_{n},x_{n},s_{n},y_{n})\}_{n}$ is contained
in a compact set. Therefore, $(\alpha_{n},t_{n},x_{n},s_{n},y_{n})_{n}$
admits a subsequence whose four last components converge to some point
$(\hat{t},\hat{x},\hat{s},\hat{y})$. With a slight abuse of notation,
we relabel this subsequence $(\alpha_{n},t_{n},x_{n},s_{n},y_{n})_{n}$,
forgetting the original sequence. It follows that $\hat{x}=\hat{y}$
since otherwise $|\hat{x}-\hat{y}|>0$ and \prettyref{lem:limsup_product}
implies
\[
\limsup_{n\rightarrow\infty}\left\{ \alpha_{n}|x_{n}-y_{n}|^{2}\right\} =\limsup_{n\rightarrow\infty}\alpha_{n}\left|\hat{x}-\hat{y}\right|^{2}=\infty,
\]
contradicting the boundedness in the discussion above. The same exact
argument yields $\hat{t}=\hat{s}$. Moreover, letting $\varphi_{n}\coloneqq\varphi(t_{n},x_{n},s_{n},y_{n};\alpha_{n})$,
\begin{align}
0\leq\limsup_{n\rightarrow\infty}\varphi_{n} & \leq\limsup_{n\rightarrow\infty}\left\{ u(t_{n},x_{n})-w_{m}(s_{n},y_{n})\right\} -\delta+\nu+\epsilon|x^{\nu}|^{2}\nonumber \\
 & \leq(u-w_{m})(\hat{t},\hat{x})-\delta+\nu+\epsilon|x^{\nu}|^{2}\label{eq:comparison_principle_proof_inequality_1}
\end{align}
and hence
\begin{equation}
0<\delta-\nu-\epsilon|x^{\nu}|^{2}\leq(u-w_{m})(\hat{t},\hat{x}).\label{eq:comparison_principle_proof_inequality_2}
\end{equation}

By \prettyref{lem:strict_supersolutions}, $(w_{m}-\mathcal{M}_{\rho}w_{m})(s_{n},y_{n})\geq\xi/m$.
Suppose, in order to arrive at a contradiction, $(\alpha_{n},t_{n},x_{n},s_{n},y_{n})_{n}$
admits a subsequence along which $(u-\mathcal{M}_{\rho}u)(t_{n},x_{n})\leq0$.
As usual, we abuse slightly the notation and temporarily refer to
this subsequence as $(\alpha_{n},t_{n},x_{n},s_{n},y_{n})_{n}$. Combining
these two inequalities,
\begin{align*}
-\xi/m & \geq u(t_{n},x_{n})-w_{m}(s_{n},y_{n})-\left(\mathcal{M}_{\rho}u(t_{n},x_{n})-\mathcal{M}_{\rho}w_{m}(s_{n},y_{n})\right)\\
 & \geq\delta-\nu-\epsilon|x^{\nu}|^{2}+\mathcal{M}_{\rho}w_{m}(s_{n},y_{n})-\mathcal{M}_{\rho}u(t_{n},x_{n}).
\end{align*}
Taking limit inferiors with respect to $n\rightarrow\infty$ of both
sides of this inequality and using the semicontinuity established
in \prettyref{lem:intervention_results} yields
\[
-\xi/m\geq\delta-\nu-\epsilon|x^{\nu}|^{2}+\mathcal{M}_{\rho}w_{m}(\hat{t},\hat{x})-\mathcal{M}_{\rho}u(\hat{t},\hat{x}).
\]
It follows, by the upper semicontinuity of $u$, that the supremum
in $\mathcal{M}_{\rho}u(\hat{t},\hat{x})$ is achieved at some $\hat{z}\in Z$.
Therefore,
\[
-\xi/m\geq\delta-\nu-\epsilon|x^{\nu}|^{2}+w_{m}(\hat{t},\hat{x}+\Gamma(\hat{t},\hat{z}))-u(\hat{t},\hat{x}+\Gamma(\hat{t},\hat{z}))\geq-\nu-\epsilon|x^{\nu}|^{2}.
\]
Taking $\nu$ and $\epsilon$ small enough %
{} yields a contradiction. By virtue of the above, we may assume that
our original sequence $(\alpha_{n},t_{n},x_{n},s_{n},y_{n})_{n}$
whose four last components converge to $(\hat{t},\hat{x},\hat{s},\hat{y})$
satisfies $(u-\mathcal{M}_{\rho}u)(t_{n},x_{n})>0$ for all $n$.

Now, suppose $\hat{t}=T$. By \prettyref{lem:strict_supersolutions},
$(w_{m}-\mathcal{M}_{\rho}w_{m})(T,\hat{x})\geq\xi/m$ and $w_{m}(T,\hat{x})-g_{\rho}(\hat{x})\geq0$.
If $(u-\mathcal{M}_{\rho}u)(T,\hat{x})\leq0$, we arrive at a contradiction
by an argument similar to the above. It follows that $u(T,\hat{x})-g_{\rho}(\hat{x})\leq0$
and hence $(u-w_{m})(T,\hat{x})\leq0$, contradicting \eqref{eq:comparison_principle_proof_inequality_2}.
We conclude that $\hat{t}<T$ so that we may safely assume $(t_{n},x_{n},s_{n},y_{n})\in\mathcal{O}$
for all $n$.

We are now in a position to apply the Crandall-Ishii lemma \citep[Theorem 3.2]{MR1118699},
which implies the existence of $X_{n},Y_{n}\in\mathscr{S}(d)$ satisfying\footnote{The elliptic Crandall-Ishii actually gives us $(X_{n},Y_{n})\in\mathscr{S}(d+1)$.
An argument using the fact that \emph{the principal submatrices of
a positive semidefinite (PSD) matrix are PSD} allows us to discard
the extra dimension associated with time.}
\begin{align*}
(\partial_{t}\varphi_{n},D_{x}\varphi_{n},X_{n}+\epsilon I_{d}) & \in\operatorname{cl}(\mathscr{P}_{\mathcal{O}}^{2,+}u(t_{n},x_{n})),\\
(-\partial_{s}\varphi_{n},-D_{y}\varphi_{n},Y_{n}-\epsilon I_{d}) & \in\operatorname{cl}(\mathscr{P}_{\mathcal{O}}^{2,-}w_{m}(s_{n},y_{n})),
\end{align*}
and
\[
-3\alpha_{n}I_{2d}\preceq\left(\begin{array}{cc}
X_{n}\\
 & -Y_{n}
\end{array}\right)\preceq3\alpha_{n}\left(\begin{array}{cc}
I_{d} & -I_{d}\\
-I_{d} & I_{d}
\end{array}\right).
\]
Due to our choice of $\varphi$, we get
\begin{align*}
a_{n}\coloneqq\partial_{t}\varphi_{n}=\partial_{t}\varphi(t_{n},x_{n},s_{n},y_{n};\alpha_{n}) & =\alpha_{n}\left(t_{n}-s_{n}\right)\\
 & =-\partial_{s}\varphi(t_{n},x_{n},s_{n},y_{n};\alpha_{n})=-\partial_{s}\varphi_{n}
\end{align*}
along with
\[
D_{x}\varphi_{n}=\alpha_{n}(x_{n}-y_{n})+\epsilon x_{n}\text{ and }D_{y}\varphi_{n}=-\alpha_{n}(x_{n}-y_{n})+\epsilon y_{n}.
\]
Therefore, since $(u-\mathcal{M}_{\rho}u)(t_{n},x_{n})>0$, \prettyref{lem:can_use_semijets} of \prettyref{app:equivalent_definitions}
yields
\begin{align}
-a_{n}+H(t_{n},x_{n},u(t_{n},x_{n}),\alpha_{n}(x_{n}-y_{n})+\epsilon x_{n},X_{n}+\epsilon I_{d}) & \leq0\nonumber \\
\text{ and }-a_{n}+H(s_{n},y_{n},w_{m}(s_{n},y_{n}),\alpha_{n}(x_{n}-y_{n})-\epsilon y_{n},Y_{n}-\epsilon I_{d}) & \geq0.\label{eq:comparison_principle_proof_inequality_3}
\end{align}
We can combine the inequalities \eqref{eq:comparison_principle_proof_inequality_3}
and apply \prettyref{lem:non_impulse_continuity} to get
\begin{align}
0 & \leq H(s_{n},y_{n},w_{m}(s_{n},y_{n}),\alpha_{n}(x_{n}-y_{n})-\epsilon y_{n},Y_{n}-\epsilon I_{d})\nonumber \\
 & \qquad-H(t_{n},x_{n},u(t_{n},x_{n}),\alpha_{n}(x_{n}-y_{n})+\epsilon x_{n},X_{n}+\epsilon I_{d})\nonumber \\
 & \leq\rho\left(w_{m}(s_{n},y_{n})-u(t_{n},x_{n})\right)+c\,(\alpha_{n}|x_{n}-y_{n}|^{2}+\epsilon(1+|x_{n}|^{2}+|y_{n}|^{2}))\nonumber \\
 & \qquad+\omega(\left|(t_{n},x_{n})-(s_{n},y_{n})\right|)\nonumber \\
 & \leq\rho\left(w_{m}(s_{n},y_{n})-u(t_{n},x_{n})\right)+2c\left(\varphi_{n}+\epsilon\right)+\omega(\left|(t_{n},x_{n})-(s_{n},y_{n})\right|)\label{eq:comparison_principle_proof_inequality_4}
\end{align}
where $\omega$ is a modulus of continuity. Moreover, by \eqref{eq:comparison_principle_proof_inequality_0},
\begin{equation}
w_{m}(s_{n},y_{n})-u(t_{n},x_{n})\leq-\delta+\nu+\epsilon|x^{\nu}|^{2},\label{eq:comparison_principle_proof_inequality_5}
\end{equation}
and by \eqref{eq:comparison_principle_proof_inequality_1},
\begin{equation}
\limsup_{n\rightarrow\infty}\varphi_{n}\leq\nu+\epsilon|x^{\nu}|^{2}.\label{eq:comparison_principle_proof_inequality_6}
\end{equation}
Applying \eqref{eq:comparison_principle_proof_inequality_5} to \eqref{eq:comparison_principle_proof_inequality_4},
taking the limit superior as $n\rightarrow\infty$ of both sides,
and finally applying \eqref{eq:comparison_principle_proof_inequality_6}
to the resulting expression yields%
\[
\delta\leq\operatorname{const.}\left(\nu+\epsilon+\epsilon|x^{\nu}|^{2}\right)
\]
($\operatorname{const.}$ above depends on $\rho$ and $c$). Picking
$\nu$ small enough and taking $\epsilon\rightarrow0$ yields the
desired contradiction.
\end{proof}

\section{\label{sec:conclusion}Extensions and future work}

By adapting the technique in \citep[Theorem 5.11]{MR2568293}, one
should be able to extend the comparison principle to solutions of
arbitrary polynomial growth. However, for polynomial degree $d$ growth,
the resulting uniqueness theorem requires the existence of a \emph{classical}
``strict'' supersolution $c\coloneqq c(t,x)$ (similar to \prettyref{lem:strict_supersolutions})
satisfying $c(t,x)/|x|^{p}\rightarrow\infty$ as $|x|\rightarrow\infty$.
Unfortunately, the construction of such a solution is ad hoc (i.e.,
problem dependent); see the discussion at the end of \citep[Section 2.5]{MR2568293}. 

We also mention here a trivial but relevant extension: removing altogether
the requirement $K(t,z)\in\omega(1)$ of \prettyref{assu:impulse_assumptions}
\ref{enu:negative_growth_condition} and redefining $Z\coloneqq Z(t,x)$,
$\Gamma\coloneqq\Gamma(t,x,z)$, and $K\coloneqq K(t,x,z)$ to depend
on $x$ so that
\[
\mathcal{M}u(t,x)\coloneqq\sup_{z\in Z(t,x)}\left\{ u(t,x+\Gamma(t,x,z))+K(t,x,z)\right\} 
\]
does not invalidate the comparison principle if we require $Z(t,x)$
to be compact for each $(t,x)\in\operatorname{cl}\mathcal{O}$ and
$(t,x)\mapsto Z(t,x)$ to be continuous (in the Hausdorff metric)
in order for Lemmas \ref{lem:intervention_results} and \ref{lem:more_intervention_results}
to remain valid (see \citep[Lemma 5.1]{MR2284012} and \citep[Lemma 4.3]{MR2568293}).
 This ``state-dependent'' setting is important namely because it
appears in practical impulse control problems \citep[Section 6]{MR3493959}.
However, the reader will find (with some reflection) that defining
$v^{+}$ and $v^{-}$ when $\Gamma$, $K$, and $Z$ depend on $x$
is a nontrivial matter. As such, this is an interesting direction
for future work.

Another possible extension would be to establish the value of a game
in which both players employ impulse (and stochastic) controls. This
may be a nontrivial undertaking if we wish to have unrestricted cost
functions for both players.  Since most of the issues in this work
arise from the dynamic programming approach, a natural way to tackle
this setting may be to use instead stochastic Perron's method \citep{MR3162260,MR3206980}.
We suspect that using stochastic Perron's method may also allow one
to weaken some assumptions (e.g., impulses occurring at stopping times
with countable range). We refer the interested reader to \citep{MR3553921},
in which stochastic Perron's method is applied to a switching game
setting that is similar to our own.

\appendix

\section{\label{app:equivalent_definitions}Alternate characterizations of
viscosity solutions}

Since the DPP (\prettyref{thm:dpp}) holds only for stopping times
taking countably many values, we are unable to use début-type stopping
times in order to derive the DPE (\prettyref{thm:dpe}). In this case,
to apply Dynkin's formula, we require our test functions to be compactly
supported. The following result affords us this luxury:
\begin{lem}
\label{lem:compact_test_functions}Let \prettyref{def:viscosity_solution}$_{c}$
refer to \prettyref{def:viscosity_solution} with $C^{1,2}(\mathcal{O})$
replaced by $C_{c}^{1,2}(\mathcal{O})$. A subsolution (resp. supersolution)
under \prettyref{def:viscosity_solution} is a subsolution (resp.
supersolution) under \prettyref{def:viscosity_solution}$_{c}$ and
vice versa.
\end{lem}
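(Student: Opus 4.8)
The plan is to observe that one inclusion is free and that the other is a routine localization which preserves the relevant jet of the test function at the contact point.

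First, since $C_{c}^{1,2}(\mathcal{O})\subset C^{1,2}(\mathcal{O})$ and the requirement on $\partial^{+}\mathcal{O}$ involves no test functions at all, any (sub/super)solution in the sense of \prettyref{def:viscosity_solution} is automatically one in the sense of \prettyref{def:viscosity_solution}$_{c}$: there are simply fewer test functions to check.

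For the converse I would argue the subsolution case, the supersolution case being identical after swapping maxima/minima and $\leq$/$\geq$. Given a subsolution $u$ under \prettyref{def:viscosity_solution}$_{c}$, write $w\coloneqq u^{*}$ and fix $(t,x,\varphi)\in\mathcal{O}\times C^{1,2}(\mathcal{O})$ at which $w-\varphi$ has a local maximum. Since $\mathcal{O}$ is relatively open in $\operatorname{cl}\mathcal{O}$, I can choose $r>0$ small enough that $\mathcal{N}_{r}\coloneqq B((t,x);r)\cap\operatorname{cl}\mathcal{O}\subset\mathcal{O}$ and $(t,x)$ realizes $\max_{\mathcal{N}_{r}}(w-\varphi)$. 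Picking $\chi\in C^{\infty}(\mathbb{R}^{1+d};[0,1])$ with $\chi\equiv1$ on $B((t,x);r/2)$ and $\operatorname{supp}\chi\subset B((t,x);r)$, the function $\tilde{\varphi}\coloneqq\chi\varphi$ lies in $C_{c}^{1,2}(\mathcal{O})$ and agrees with $\varphi$ on $\mathcal{N}_{r/2}$; hence $(D\tilde{\varphi}(t,x),D^{2}\tilde{\varphi}(t,x))=(D\varphi(t,x),D^{2}\varphi(t,x))$ and $(t,x)$ is still a local maximum of $w-\tilde{\varphi}$. Applying the \prettyref{def:viscosity_solution}$_{c}$ subsolution inequality to $\tilde{\varphi}$ then yields exactly $F(t,x,w,D\varphi(t,x),D^{2}\varphi(t,x))\leq0$, and the inequality on $\partial^{+}\mathcal{O}$ is verbatim the same in both definitions; so $u$ is a subsolution under \prettyref{def:viscosity_solution}.

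There is essentially no obstacle here. The one point worth stating explicitly is that we only need $(t,x)$ to be a \emph{local} maximum of $w-\tilde{\varphi}$ — far from $(t,x)$ one has $w-\tilde{\varphi}=w$, which may well exceed its value at $(t,x)$, but this is immaterial — and it is precisely this that makes compactifying the support of the test function harmless. The only mild technicality is the case $t=0$, where $(t,x)$ lies on the initial-time part of $\mathcal{O}$ (which belongs to $\mathcal{O}$): there the balls above are read as intersected with $\operatorname{cl}\mathcal{O}$ and $\chi$ is taken to be the restriction to $\operatorname{cl}\mathcal{O}$ of a smooth bump on $\mathbb{R}^{1+d}$, which changes nothing in the argument.
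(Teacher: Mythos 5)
Your proof is correct and takes essentially the same route as the paper's: the trivial inclusion from $C_{c}^{1,2}(\mathcal{O})\subset C^{1,2}(\mathcal{O})$, plus a localization replacing $\varphi$ by a compactly supported test function that coincides with $\varphi$ near the contact point, so the jets at $(t,x)$ and the local-maximum property are preserved. The only difference is cosmetic: you multiply by a smooth cutoff, whereas the paper glues $\varphi$ to zero across an annulus with an explicit mollifier.
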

\begin{proof}
One direction is trivial, since $C_{c}^{1,2}(\mathcal{O})\subset C^{1,2}(\mathcal{O})$.

Suppose $u$ is a subsolution under \prettyref{def:viscosity_solution}$_{c}$
and let $(t,x,\varphi)\in\mathcal{O}\times C^{1,2}(\mathcal{O})$
be given as in \prettyref{def:viscosity_solution} (we need not consider
the parabolic boundary $\partial^{+}\mathcal{O}$, as it is not ``tested''
by $\varphi$). For brevity, let $B_{r}\coloneqq B(x;r)$. Let $\psi\in C_{c}^{1,2}(\mathcal{O})$
be given by 
\begin{align*}
\psi(s,y) & \coloneqq\varphi(s,y)\boldsymbol{1}_{B_{1}}(y)+\zeta(y)\varphi(s,\hat{y})\boldsymbol{1}_{B_{2}\setminus B_{1}}(y)\\
\text{where }\zeta(y) & \coloneqq\exp(1-1/(1-|y-\hat{y}|^{4}))
\end{align*}
and $\hat{y}\in\operatorname{cl}B_{1}$ is the (unique) point in $\operatorname{cl}B_{1}$
closest to $y$. Intuitively, $\zeta$ is used to mollify the value
of $\psi$ on $\partial B_{1}$ (where it is equal to $\varphi$)
with its value on $\partial B_{2}$ (where it is equal to zero). Note
that $\psi$ inherits all the local properties of $\varphi$ at $(t,x)$
since the two functions coincide on an open ball. Therefore,
\[
F(t,x,u^{*},D\varphi(t,x),D^{2}\varphi(t,x))=F(t,x,u^{*},D\psi(t,x),D^{2}\psi(t,x))\leq0,
\]
as desired. The supersolution case is identical.
\end{proof}
Below, we give one direction of a characterization of viscosity solutions
to \eqref{eq:hjbi_discounted} using the closed parabolic semijets
$\operatorname{cl}(\mathscr{P}_{\mathcal{O}}^{2,\pm}u(t,x))$ (a natural
converse can also be established, but is not needed here). This result
is required in the proof of the comparison principle (\prettyref{thm:comparison_principle}).
\begin{lem}
\label{lem:can_use_semijets}If $u$ is an upper (resp. lower) semicontinuous
subsolution (resp. supersolution) of \eqref{eq:hjbi_discounted},
then for all $(t,x)\in\mathcal{O}$ and $(a,p,X)\in\operatorname{cl}(\mathscr{P}_{\mathcal{O}}^{2,+}u(t,x))$
(resp. $\operatorname{cl}(\mathscr{P}_{\mathcal{O}}^{2,-}u(t,x))$),
\[
F_{\rho}(t,x,u,(a,p),X)\leq0\text{ (resp. }\geq0\text{)}.
\]
\end{lem}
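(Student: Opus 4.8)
The plan is to reduce to the test-function definition of viscosity solution via the standard dictionary between parabolic semijets and smooth tangent functions, and then pass to the limit along the defining sequence of the closure. First I would recall the elementary fact (see the User's Guide \citep{MR1118699}) that $(a,p,X)\in\mathscr{P}^{2,+}_{\mathcal{O}}u(t,x)$ if and only if there is a $\varphi\in C^{1,2}(\mathcal{O})$ for which $u-\varphi$ has a local maximum at $(t,x)$, with (after subtracting a constant) $(u-\varphi)(t,x)=0$ and $(\partial_t\varphi,D_x\varphi,D_x^2\varphi)(t,x)=(a,p,X)$. Since $u$ is by assumption upper semicontinuous, $u^{*}=u$; applying the subsolution property of \eqref{eq:hjbi_discounted} to such a $\varphi$ --- which enters $F_\rho$ only through its derivatives at $(t,x)$ --- gives precisely $F_\rho(t,x,u,(a,p),X)\le 0$. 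This settles the claim for $(a,p,X)\in\mathscr{P}^{2,+}_{\mathcal{O}}u(t,x)$, and it remains to handle the closure.

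For $(a,p,X)\in\operatorname{cl}(\mathscr{P}^{2,+}_{\mathcal{O}}u(t,x))$, I would choose $(t_n,x_n,a_n,p_n,X_n)_n$ with $(a_n,p_n,X_n)\in\mathscr{P}^{2,+}_{\mathcal{O}}u(t_n,x_n)$, $(t_n,x_n)\to(t,x)$ in $\mathcal{O}$ (possible for large $n$ since $\mathcal{O}$ is open), $u(t_n,x_n)\to u(t,x)$, and $(a_n,p_n,X_n)\to(a,p,X)$. Writing, on $\mathcal{O}$, $F_\rho(\cdot,u,(a,p),X)=\min\{G,\,u-\mathcal{M}_\rho u\}$ where $G(t,x,u(t,x),(a,p),X)\coloneqq -a+H(t,x,u(t,x),p,X)$ with $H$ as in \eqref{eq:hamiltonian}, the previous step yields $\min\{G_n,\,u(t_n,x_n)-\mathcal{M}_\rho u(t_n,x_n)\}\le 0$ for every $n$, where $G_n\coloneqq -a_n+H(t_n,x_n,u(t_n,x_n),p_n,X_n)$. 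Since $\min\{A,B\}\le 0$ means $A\le 0$ \emph{or} $B\le 0$, at least one of the two scalar inequalities holds for infinitely many $n$; pass to such a subsequence and argue by cases.

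If $G_n\le 0$ along the subsequence, then the continuity of $H$ in all of its arguments (from the continuity of $\mu,\sigma,f$ and the compactness of $B$, exactly as used in \prettyref{lem:non_impulse_continuity}) together with the convergence of $(t_n,x_n,u(t_n,x_n),a_n,p_n,X_n)$ gives $G(t,x,u(t,x),(a,p),X)\le 0$, hence $F_\rho(t,x,u,(a,p),X)\le G\le 0$. If instead $u(t_n,x_n)\le\mathcal{M}_\rho u(t_n,x_n)$ along the subsequence, then taking limits superior and using that $\mathcal{M}_\rho u=\mathcal{M}_\rho u^{*}$ is upper semicontinuous (\prettyref{lem:intervention_results}, which remains valid for $\mathcal{M}_\rho$) gives $u(t,x)=\limsup_n u(t_n,x_n)\le\limsup_n\mathcal{M}_\rho u(t_n,x_n)\le\mathcal{M}_\rho u(t,x)$, so again $F_\rho(t,x,u,(a,p),X)\le u(t,x)-\mathcal{M}_\rho u(t,x)\le 0$. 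The supersolution case is analogous but simpler: $\min\{A,B\}\ge 0$ means $A\ge 0$ \emph{and} $B\ge 0$, so no subsequence is needed --- one passes to the limit in $G_n\ge 0$ by continuity of $H$, and in $u(t_n,x_n)-\mathcal{M}_\rho u(t_n,x_n)\ge 0$ using the lower semicontinuity of $\mathcal{M}_\rho u=\mathcal{M}_\rho u_{*}$.

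The only delicate point --- and the reason \prettyref{lem:intervention_results} is invoked here --- is the passage to the limit in the nonlocal obstacle term $u-\mathcal{M}_\rho u$: it is not continuous, only semicontinuous, and it is precisely the fact that $\mathcal{M}_\rho$ preserves upper semicontinuity (for the subsolution direction) and lower semicontinuity (for the supersolution direction) that makes the required one-sided limit go the correct way. The semijet/test-function equivalence and the continuity of the non-impulse Hamiltonian are routine.
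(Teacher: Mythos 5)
Your argument is correct and follows essentially the same route as the paper: pick the defining sequence for the closed semijet, use the semijet/test-function equivalence at each $(t_n,x_n)$, and pass to the limit using the continuity of $H$ and the semicontinuity of $\mathcal{M}_\rho u$ from \prettyref{lem:intervention_results}. The only cosmetic difference is that the paper packages your two-case subsequence analysis as the single observation that $\min\{-a+H,\;r-\mathcal{M}_\rho u\}$ is lower semicontinuous in all its arguments, so the $\liminf$ can be taken through the minimum in one step.
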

We prove only the subsolution case; the supersolution case is identical.
\begin{proof}
Suppose $u$ is an upper semicontinuous subsolution, $(t,x)\in\mathcal{O}$,
and $(a,p,X)\in\operatorname{cl}(\mathscr{P}_{\mathcal{O}}^{2,+}u(t,x))$.
By definition, we can find a $\mathcal{O}\times\mathbb{R}\times\mathbb{R}^{d+1}\times\mathscr{S}(d)$-valued
sequence $(t_{n},x_{n},u(t_{n},x_{n}),a_{n},p_{n},X_{n})_{n}$ converging
to $(t,x,u(t,x),a,p,X)$ such that $(a_{n},p_{n},X_{n})\in\mathscr{P}_{\mathcal{O}}^{2,+}u(t_{n},x_{n})$
for all $n$. Since $\mathcal{M}u$ is upper semicontinuous by \prettyref{lem:intervention_results},
the function defined by
\[
Q(t,x,r,a,p,X)\coloneqq\min\left\{ -a+H(t,x,r,p,X),r-\mathcal{M}u(t,x)\right\} 
\]
is lower semicontinuous. Therefore,
\begin{multline*}
0\geq\liminf_{n\rightarrow\infty}F_{\rho}(t_{n},x_{n},u,(a_{n},p_{n}),X_{n})=\liminf_{n\rightarrow\infty}Q(t_{n},x_{n},u(t_{n},x_{n}),a_{n},p_{n},X_{n})\\
\geq Q(t,x,u(t,x),a,p,X)=F_{\rho}(t,x,u,(a,p),X).\qedhere
\end{multline*}
\end{proof}

\bibliographystyle{plainnat}
\bibliography{a_zero_sum_stochastic_differential_game_with_impulses_precommitment_and_unrestricted_cost_functions}

\end{document}